\newbox\removebox
\newcommand\remove[1]{%
\setbox\removebox=\ifmmode\hbox{$#1$}\else\hbox{#1}\fi%
\leavevmode
\rlap{\textcolor{black}{\vrule height0.8ex depth-0.6ex width\wd\removebox}}%
\box\removebox
}
\long\def\bigremove#1{%
\par\setbox\removebox=\vbox{#1}%
\vbox{%
\vbox to0pt{\hbox{\tikz\draw[color=black,thick] (0,0) -- (\wd\removebox,-\ht\removebox)  (\wd\removebox,0) -- (0,-\ht\removebox);}}
\box\removebox
}
}
\def\VF{\mathrm{VF}}
\def\VG{\mathrm{VG}}
\def\deg{\operatorname{deg}}
\def\ac{{\overline{\rm ac}}}
\def\LPas{\cL_{\rm DP}}
\def\Lan{\cL_{\rm an}}
\def\LT{{\rm LT}}
\def\longhookrightarrow{\mathrel\lhook\joinrel\longrightarrow}
\let\cal\mathcal
\def\11{{\mathbf 1}}
\def\AA{{\mathbb A}}
\def\CC{{\mathbb C}}
\def\FF{{\mathbb F}}
\def\NN{{\mathbb N}}
\def\QQ{{\mathbb Q}}
\def\RR{{\mathbb R}}
\def\ZZ{{\mathbb Z}}
\def\cA{{\mathcal A}}
\def\cF{{\mathcal F}}
\def\cL{{\mathcal L}}
\def\cM{{\mathcal M}}
\def\cO{{\mathcal O}}
\def\cT{{\mathcal T}}
\def\cX{{\mathcal X}}
\def\cZ{{\mathcal Z}}
\def\llp{\mathopen{(\!(}}
\def\llb{\mathopen{[\![}}
\def\rrp{\mathopen{)\!)}}
\def\rrb{\mathopen{]\!]}}
\def\Fie{\mathrm{Field}}
\newtheorem*{thmblock}{Theorem}
\newtheorem{thm}[subsubsection]{Theorem}
\newtheorem{lem}[subsubsection]{Lemma}
\newtheorem{cor}[subsubsection]{Corollary}
\newtheorem{prop}[subsubsection]{Proposition}
\newtheorem{question}[subsubsection]{Question}
\theoremstyle{definition}
\newtheorem{defn}[subsubsection]{Definition}
\newtheorem{example}[subsubsection]{Example}
\newtheorem{def-prop}[subsubsection]{Proposition-Definition}
\newtheorem{def-theorem}[subsubsection]{Theorem-Definition}
\newtheorem{def-lem}[subsubsection]{Lemma-Definition}
\theoremstyle{remark}
\newtheorem{remark}[subsubsection]{Remark}
\theoremstyle{plain}
\numberwithin{equation}{subsection}
\newcommand{\ord}{\operatorname{ord}}
\begin{document}

\setcounter{tocdepth}{1} 

\title[Non-archimedean parametrizations]{Non-archimedean Yomdin-Gromov parametrizations and points of bounded height}


\author{Raf Cluckers}
\address{Universit\'e Lille 1, Laboratoire Painlev\'e, CNRS - UMR 8524, Cit\'e Scientifique, 59655
Villeneuve d'Ascq Cedex, France, and,
Katholieke Universiteit Leuven, Department of Mathematics,
Celestijnenlaan 200B, B-3001 Leu\-ven, Bel\-gium}
\email{Raf.Cluckers@math.univ-lille1.fr}
\urladdr{http://math.univ-lille1.fr/$\sim$cluckers}

\author{Georges Comte}
\address{Universit\'e Savoie Mont Blanc, LAMA,
CNRS UMR 5127,
F-73000 Chamb\'ery, France}
\email{Georges.Comte@univ-savoie.fr}
\urladdr{gc83.perso.sfr.fr}

\author{Fran\c cois Loeser}
\address{Sorbonne Universit\'es, UPMC Univ Paris 06, UMR 7586 CNRS, Institut Math\'ematique de Jussieu, F-75005 Paris, France}
\email{Francois.Loeser@upmc.fr}
\urladdr{http://www.math.jussieu.fr/$\sim$loeser/}

\begin{abstract}
We prove an analogue of the Yomdin-Gromov Lemma for $p$-adic definable sets and more broadly in a non-archimedean, definable context. This analogue keeps track of piecewise approximation by Taylor polynomials, a nontrivial aspect in the totally disconnected case. We apply this result to bound the number of rational points of bounded height on the transcendental part of $p$-adic subanalytic sets, and to bound the dimension of the set of complex polynomials of bounded degree lying on an algebraic variety defined over $\mathbb{C}\llp t \rrp$, in analogy to results by Pila and Wilkie, resp.~by Bombieri and Pila. Along the way we prove, for definable functions in a general context of non-archimedean geometry, that local Lipschitz continuity implies piecewise global Lipschitz continuity.
\end{abstract}

\maketitle

\begin{spacing}{0.999}
\section{Introduction}\label{sec:1}

\subsection{}A very efficient tool in diophantine geometry is the so-called
determinant method  which was developed by Bombieri and Pila in the influential paper
\cite{BP} about the number of integral points of bounded
height on affine algebraic and transcendental plane curves. Basically, the method
consists in using a determinant of a suitable
set of monomials evaluated at the integral points, in order to construct a family of auxiliary polynomials
vanishing at all integral points on the curve within a small enough box.
Building on the estimates  in \cite{BP} for algebraic curves, Pila proved in \cite{pila_ast} bounds
on the number of integral (resp. rational) points of bounded height on affine (resp. projective) algebraic varieties of any  dimension,
improving on previous results by S. D. Cohen using the large sieve method \cite{cohen}. Important further improvements going  towards optimal bounds conjectured by Serre in \S 13 of \cite{serre}
have been made since by Heath-Brown,  Browing  and Salberger \cite{HB},\cite{BHS},\cite{sal}.

In \cite{PiWi}, Pila and Wilkie proved a general estimate for the number of rational points on the transcendental part of sets definable in an o-minimal structure; this has
been used in a spectacular way by  Pila to provide
an  unconditional proof of some cases of the Andr\'e-Oort Conjecture \cite{PiAnnals} (see also \cite{ScaBourbaki}, \cite{ScaBull} and \cite{ScaPanoramas} for surveys on applications in diophantine geometry of the Pila-Wilkie Theorem).
Lying at the heart  of Pila and Wilkie's approach is  the possibility
of having uniform - in terms of number of parametrizations and in terms of bounds on the partial derivatives -  $C^k$-parametrizations. These parametrizations are provided by
an o-minimal version of Gromov's algebraic
parametrization Lemma \cite{gromov} (see also \cite{Burguet}), itself a refinement of a previous result  of
Yomdin \cite{YY},\cite{YY2}. Such $C^k$-parametrizations enter the determinant method via Taylor approximation.

The aim of this paper is to provide a version of the Yomdin-Gromov Lemma in the non-archimedean setting, notably for subanalytic sets over $\mathbb{Q}_p$ and
$\mathbb{C} \llp t \rrp$, and to develop
the determinant method in this context in order to obtain non-archimedean analogues of some of the results in diophantine geometry mentionned above.
At first sight one may have doubts about the realizability of
such a   program, since because of the totally disconnected character of non-archimedean spaces,  it seems there is no way for a  global Taylor formula to make sense   in this framework.
A first indication that the situation may not be completely hopeless, is provided by the fact that in previous work \cite{CCL} (see also \cite{ch}) we have been able to prove a version of
first-order Taylor approximation, piecewise globally, in the definable $p$-adic setting. In the present paper, though we extend this first order result to a much wider situation,
we have chosen not trying to  generalize it   to higher order,
but instead we  show directly the existence of
uniform $C^k$-parametrizations that do  satisfy Taylor approximation, which is enough for our purpose.
The existence of such parametrizations is provided by Theorem \ref{GYT} which is the main result of  Section \ref{sec:2}.
In Section \ref{sec:3}, we deduce  a $p$-adic analogue of the Theorem of Pila and Wilkie in \cite{PiWi}, in the  strengthened version given by Pila in \cite{PiSelecta} in terms of blocks.
In Section \ref{sec:4}, we prove a
geometric analogue of results of Bombieri-Pila \cite{BP} and Pila \cite{pila_ast} over
$\mathbb{C} \llp t \rrp$ where  counting number of points is replaced by counting dimensions.

 The diophantine applications we give in the $p$-adic case (concerning the density of rational points on the transcendental part of definable sets) and the motivic case (concerning the density of rational points on algebraic sets) are quite different.
One should notice that in the algebraic case, working over $\mathbb{Q}_p$ instead of $\CC\llp t \rrp$ would not provide better estimates than those following directly from the archimedean ones in
\cite{BP} and  \cite{pila_ast}.

\subsection{}\label{ssec:1.2}Let us spell out in some more detail
 basic versions of three of our results for subanalytic $p$-adic sets.

One calls a set $X\subset \QQ_p^n$ semialgebraic if it is definable in the ring language with parameters from $\QQ_p$. By
adding to the ring language symbols for analytic functions, one obtains subanalytic sets (see Section \ref{sec:GYT} below, with $L=\QQ_p$).
The dimension of a nonempty subanalytic set $X\subset \QQ_p^n$ is the largest integer $m\geq 0$ such that there exists a coordinate projection $\pi:\QQ_p^n\to \QQ_p^m$ such that $\pi(X)$ has nonempty interior. We will simply denote by $\vert x \vert $ the $p$-adic norm of an element $x\in \QQ_p$ and
when furthermore $x\in \QQ$, we will denote by $\vert x \vert_\RR$ the real norm of $x$.


For $X$ a subset of $\QQ_p^n$ and $T>1$ a real number, write $X(\QQ,T)$
for the set consisting of points $(x_1,\cdots, x_n)$ in $X\cap \QQ^n$ such that one can write $x_i$ as $a_i/b_i$
where $a_i$ and $b_i\not=0$ are integers with $|a_i|_\RR\leq T$ and $|b_i|_\RR\leq T$.


For $X$ a subset of $\QQ_p^n$, write $X^{\rm alg}$ for the subset of $X$ consisting of points $x$ such that there exists an algebraic curve $C\subset \AA_{\QQ_p}^n$ such that $C(\QQ_p)\cap X$ is locally of dimension $1$ at $x$.

With this notation, the following statement is a particular case of
Theorem \ref{blocksQ}:

\begin{thmblock}
Let $X\subset \QQ_p^n$ be a subanalytic set of dimension $m$ with $m<n$. Let $\varepsilon>0$ be given. Then there exist an integer $C=C(\varepsilon,X)>0$ and a semialgebraic set $W= W(\varepsilon,X)\subset \QQ_p^n$ such that $W\cap X$ lies inside $X^{\rm alg}$, and such that
for each $T$,
one has
$$
\# (X\setminus W) (\QQ,T) \leq C T^\varepsilon .
$$
\end{thmblock}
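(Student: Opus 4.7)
My plan is to adapt the classical determinant-method proof of Pila--Wilkie to the $p$-adic setting, using Theorem~\ref{GYT} as the analytic input in place of the archimedean Yomdin--Gromov lemma, and using the product formula over $\QQ$ in place of the archimedean size inequality of Bombieri--Pila. I argue by induction on $m=\dim X$, the case $m=0$ being trivial.

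First I apply Theorem~\ref{GYT} to cover $X$ by the images of finitely many $C^k$-parametrizations $\phi \colon \cO^m \to \QQ_p^n$, with $\cO = \ZZ_p$ and $k$ to be chosen large later, such that on each $p$-adic ball $B \subset \cO^m$ of radius $p^{-s}$ every composition $\mathbf{x}^\alpha \circ \phi$ (with $\alpha \in \NN^n$, $|\alpha|\leq d$) agrees with its order-$k$ Taylor polynomial at the center of $B$ up to error $\leq p^{-(k+1)s}$, and its derivatives of order $\leq k$ are bounded by $1$. Then I run the determinant method: enumerate the monomials $\mathbf{x}^{\alpha_1},\dots,\mathbf{x}^{\alpha_D}$ of degree $\leq d$, with $D=\binom{n+d}{d}$, and for any $D$-tuple $t^{(1)},\dots,t^{(D)}\in B$ form the matrix $M=(\mathbf{x}^{\alpha_i}(\phi(t^{(j)})))$. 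Replacing each row by its Taylor expansion at the center of degree $r$ chosen so that $\binom{m+r}{m}<D$, the ``principal part'' of $\det M$ vanishes identically by rank reasons, and what remains is bounded $p$-adically by $p^{-\nu s}$ with $\nu=\nu(d,k,m,n)\to\infty$ as $k\to\infty$.

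If each $\phi(t^{(j)})$ lies in $X(\QQ,T)$, the entries of $M$ are rationals of naive height $\leq T^d$, so after clearing denominators $\det M$ becomes a rational whose numerator and denominator have archimedean absolute value $\leq C\cdot T^{dD^2}$. Since the denominator is coprime to $p$, the product formula on $\QQ$ forces $\det M=0$ as soon as $\nu s > (dD^2+dD)\log T/\log p + O(1)$. Choosing $s_0=\lceil \gamma \log T\rceil$ accordingly, the vanishing of \emph{all} such determinants means that on each ball $B$ of radius $p^{-s_0}$, all points of $\phi(B)\cap X(\QQ,T)$ lie on a single algebraic hypersurface $V_B\subset\AA^n_{\QQ_p}$ of degree $\leq d$. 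The number of such balls is $p^{m s_0}\leq T^{m\gamma\log p+o(1)}$; by taking $d$ large in terms of $\varepsilon,m,n$ and then $k$ large in terms of $d$, one makes this exponent smaller than $\varepsilon/2$.

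To build $W$, I examine each $V_B$: either $\dim(V_B\cap \phi(B)\cap X)<m$, in which case the inductive hypothesis on dimension produces a semialgebraic set for $V_B\cap X$ contributing at most $C' T^{\varepsilon}$ extra points; or $V_B$ contains a top-dimensional germ of $X$ at some point $y$, in which case a local algebraic curve through $y$ sitting inside $X\cap V_B$ witnesses $y\in X^{\mathrm{alg}}$, and the whole of $V_B$ (which is algebraic) can be thrown into $W$. Because the candidate hypersurfaces $V_B$ vary in the algebraic family of degree-$\leq d$ hypersurfaces in $\AA^n$, the ``bad'' $V_B$'s are picked out by a semialgebraic condition on their parameters (dimension-of-intersection with $X$ is semialgebraic by $p$-adic cell decomposition), and their union is therefore a semialgebraic subset of $\QQ_p^n$. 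Combined with the semialgebraic sets produced by the inductive step, this yields the required $W$.

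\textbf{Main obstacle.} The delicate point is the step just described: one must produce $W$ as a \emph{single} semialgebraic set independent of $T$, not merely a $T$-dependent family of hypersurfaces. This requires running the whole determinant construction uniformly in a definable family of parametrizations, then verifying that the locus of parameters for which $V_B$ locally absorbs $X$ is semialgebraic, and finally that the recursive call preserves semialgebraicity at all levels of the induction on $\dim X$. This is the $p$-adic analogue of Pila's refinement of Pila--Wilkie to blocks, and constitutes the most technical part of the argument.
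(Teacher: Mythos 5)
Your proposal follows the same overall strategy as the paper (Theorem \ref{GYT} for $T_r$-parametrizations, the determinant method, the tension between $p$-adic smallness and archimedean boundedness, induction on dimension), so the skeleton is right. Two comments on differences and one genuine gap.

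A minor presentational difference: you invoke the product formula over $\QQ$ directly on the determinant of rational entries, after clearing denominators coprime to $p$. The paper instead works first with integer points $X(\ZZ,T)$, uses the elementary observation of Remark \ref{bound} (an integer $x$ with $\vert x\vert_\RR\le T$ has $\vert x\vert\ge T^{-1}$), establishes Proposition \ref{blocks}, and then deduces the rational-point version from Theorem \ref{algheights} by absorbing the heights of numerator and denominator into the ``polynomial height'' $H^{\rm poly}_k$ and passing through semialgebraic sections $\Psi_i$ of a projection. Your route is morally equivalent but you should be careful: the paper's reduction lets one carry the uniformity in families cleanly through the semialgebraic $\Psi_i$, which is exactly where the block structure is transported.

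The genuine gap is in the construction of $W$. You propose that whenever $V_B$ contains a top-dimensional germ of $X$, ``the whole of $V_B$ (which is algebraic) can be thrown into $W$.'' This does not preserve the requirement $W\cap X\subset X^{\mathrm{alg}}$: even when $\dim_y(V_B\cap X)=m$ at \emph{some} point $y$, the set $V_B\cap X$ may contain other points at which it is zero-dimensional, and such points need not lie in $X^{\mathrm{alg}}$. The paper's proof of Proposition \ref{blocks} addresses exactly this by stratifying $Z\cap\Sigma$ and \emph{excising} the loci $Z_1,Z_2,Z_3$ where $(Z\cap\Sigma)_{y,\sigma}$, $\Sigma_{y,\sigma}$, or $Z_{y,\sigma}$ fails to be smooth of dimension $k$; only the remaining part, a genuine definable family of blocks (smooth, pure-dimensional, and locally agreeing with the semialgebraic $\Sigma_{y,\sigma}$), is retained in the ``positive-dimensional'' side. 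The excised loci have lower fiber dimension and are handled by the induction. You correctly flag the need for a $T$-independent semialgebraic $W$ as the main obstacle, but the mechanism you sketch (declaring the locus of ``absorbing'' $V_B$ semialgebraic and taking its union) is not the right fix; the block formalism, with the explicit removal of singular loci and the accounting of zero-dimensional blocks as singletons, is what makes the count $\#(X\setminus W)(\QQ,T)\le CT^\varepsilon$ come out with $W$ actually satisfying $W\cap X\subset X^{\mathrm{alg}}$.
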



More generally, we also provide estimates for algebraic points of bounded degree on $X$, as follows.
For a rational number $a$, we define $H_0(a)$ as $\max (\vert r\vert_\RR, \vert s\vert_\RR)$ when $a=r/s$ with $r$ and $s$ integers which are either relatively prime or are such that $s=1$. For nonrational $a$, $H_0(a)$ is defined to be $+\infty$. We extend $H_0$ to tuples $a=(a_i)$ by putting $H_0(a) = \max_i(H_0(a_i))$.
 For an integer $k\geq 1$ and any $x\in\QQ_p$ 
we define $H_k^{\rm poly}(x)$ as 
$\min_a (H_0(a))$
where the minimum runs over all nonzero tuples $a=(a_i)_{i\in \{0,\ldots,k\}} $ such that $\sum_{i=0}^k a_i x^k = 0$ if such tuple exists, and as $+\infty$ otherwise. One extends $H_k^{\rm poly}$ to $x$ in $\QQ_p^n$ by taking the maximum of the $H_k^{\rm poly}(x_i)$ for $i=1,\ldots,n$.
 For $X$ a subset of $\QQ_p^n$, $k\geq 0$ an integer and $T>1$ a real number, write
$X(k,T)$
for the set consisting of $x$ in $X$ satisfying that $H_k^{\rm poly}(x)$ is at most equal to $T$.
The following statement follows from Theorem \ref{algheights}:
\begin{thmblock}
Let $X\subset \QQ_p^n$ be a subanalytic set of dimension $m$ with $m<n$. Let $\varepsilon>0$ and an integer $k\geq 0$ be given. Then there exist an integer $C=C(\varepsilon,k,X)>0$ and a semialgebraic set $W= W(\varepsilon,k,X)\subset \QQ_p^n$ such that $W\cap X$ lies inside $X^{\rm alg}$, and such that
for each $T$,
one has
$$
\# (X\setminus W) (k,T) \leq C T^\varepsilon.
$$
\end{thmblock}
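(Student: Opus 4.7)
The plan is to mimic the proof of the first theorem (the rational-point case) while replacing the rational determinant method by its variant for algebraic points of bounded degree, as Pila did in the archimedean setting. The non-archimedean Yomdin-Gromov parametrization of Theorem \ref{GYT} supplies the $C^r$-parametrizations with Taylor approximation that feed the determinant method; the new ingredient is to track both $p$-adic and archimedean size information for algebraic numbers of degree at most $k$ with height at most $T$.

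First I would fix an integer $r$, large in terms of $\varepsilon, k, m, n$, and apply Theorem \ref{GYT} to obtain finitely many definable $C^r$-maps $\varphi_i : \ZZ_p^m \to X$ whose images cover the $m$-dimensional part of $X$ and whose order-$r$ Taylor polynomials approximate $\varphi_i$ with explicit control. Partitioning each $\ZZ_p^m$ into $p$-adic balls of radius $T^{-\alpha}$ for a suitable $\alpha>0$, one obtains on every ball an approximation of $\varphi_i$ by a polynomial map $P$ of degree $r$ with error of $p$-adic norm at most $T^{-\alpha(r+1)}$.

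Next, on a given ball, let $x^{(1)}, \ldots, x^{(N)}$ be the points of $X(k,T)$ whose preimages lie there. Following the determinant method, one forms the matrix $M$ whose rows are indexed by these points and whose columns are indexed by monomials of degree at most $D$ in $x_1, \ldots, x_n$, with $D$ chosen in terms of $m, n, \varepsilon$. Taylor approximation shows that $M$ is congruent modulo $T^{-\alpha(r+1)}$ to the matrix $M_0$ obtained by evaluating the same monomials at $P(y)$; since $P$ has degree $r$, the columns of $M_0$ span a space of dimension at most $\binom{rD+m}{m}$, which is strictly less than $\binom{n+D}{D}$ once $D$ is large enough, because $m < n$. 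Hence the relevant maximal minors of $M_0$ vanish, so the corresponding minors of $M$ have $p$-adic norm at most $T^{-\alpha(r+1)}$.

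The key step is to show that each such minor of $M$ is an algebraic number in an extension of $\QQ$ of bounded degree, with archimedean conjugates polynomially bounded in $T$. A product-formula argument then forces the minor to vanish once $\alpha(r+1)$ exceeds the resulting archimedean exponent, so the points $x^{(j)}$ in that ball all lie on a common hypersurface of degree $D$. Intersecting with $X$ yields a proper subanalytic subset of strictly smaller dimension; an induction on $\dim X$ collects these pieces into the desired semialgebraic set $W$ with $W \cap X \subset X^{\rm alg}$. Balls containing fewer than $\binom{n+D}{D}$ preimages contribute at most $C T^{\alpha m}$ points, so choosing $r$ large and $\alpha$ small makes both $\alpha m$ and the iterated losses bounded by $\varepsilon$. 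The main obstacle is precisely this height bookkeeping---controlling both $p$-adic denominators and archimedean conjugates of polynomial expressions in degree-$k$ algebraic coordinates---together with the correct calibration of $\alpha, r, D$ so that the inequalities close up with final exponent $\varepsilon$.
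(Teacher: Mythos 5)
Your proposal takes a genuinely different route from the paper, and in doing so it leaves the hardest part unresolved.

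The paper does not re-run the determinant method for algebraic points at all. Instead, it proves Theorem \ref{algheights} as a \emph{corollary} of the integer-point block result (Proposition \ref{blocks}), by the following reduction: for a point $x\in X(k,T)$, each coordinate $x_i$ satisfies a relation $\sum_{j=0}^k a_{ij} x_i^j = 0$ with the $a_{ij}$ integers of real absolute value $\leq T$. One forms the auxiliary definable set
$$
A_{n,k}=\bigl\{(\xi,x,y)\in(\QQ_p^{k+1}\setminus\{0\})^n\times\QQ_p^{n+\ell}:\ \textstyle\sum_j\xi_{i,j}x_i^j=0\ \text{for all } i\bigr\},
$$
and observes that the coefficient vector $\xi$ is an \emph{integer} point of height $\leq T$ on the projection $\pi_1(Z_{n,k})$. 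Applying Proposition \ref{blocks} to that auxiliary family yields few blocks covering the possible $\xi$; finitely many semialgebraic sections $\Psi_i$ of $\pi_1$ (there are at most $k^n$ of them, since $\pi_1$ is finite-to-one) then recover $x$ from $\xi$ and $y$. Since images of blocks under semialgebraic maps are unions of boundedly many blocks, the family of blocks for $Z$ drops out. No new counting argument is needed.

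Your plan, by contrast, re-runs the determinant method directly over algebraic points, and the step you yourself flag as ``the main obstacle'' -- controlling both $p$-adic and archimedean sizes of determinants in degree-$k$ algebraic coordinates -- is not a mere calibration issue but the crux, and it is more delicate than your sketch suggests. The determinant $\Delta$ lies in a compositum of number fields of degree bounded in terms of $k$, $n$, $\mu$; to run a product-formula argument you need not only small $p$-adic norm (from Taylor approximation) and bounded archimedean conjugates (from the height bound) but also $|\Delta|_w\leq 1$ at every other finite place $w$, i.e.\ integrality. Because $H_k^{\rm poly}$ does not force the coordinates to be algebraic integers (the defining polynomial need not be monic), you would have to clear denominators by multiplying through by powers of leading coefficients -- and this changes the exponents you are trying to balance. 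None of this is carried out, and the paper's reduction trick makes it unnecessary. Moreover, your sketch never appeals to Proposition \ref{blocks} nor to the block formalism, yet the statement asks for a \emph{semialgebraic} set $W$ with $W\cap X\subset X^{\rm alg}$; the paper obtains this precisely because images of blocks under the semialgebraic sections $\Psi_i$ are finite unions of blocks, while in your approach the induction on dimension is asserted but the mechanism that keeps $W$ semialgebraic (rather than merely subanalytic) is not explained.

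In short: what the paper buys with the $\xi$-coefficient trick is that it never has to touch the arithmetic of number fields at all; the hard work is already encapsulated in the integer-point case. Your direct approach might be made to work, but the gap you acknowledge is the whole difficulty, and it is exactly the difficulty the paper's construction was designed to avoid.
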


Our proofs of  Theorems  \ref{algheights} and \ref{blocksQ} rely on the existence of
reparametrizations provided by Theorem \ref{GYT}, where we prove the following:

\begin{thmblock}
Let $n\geq 0$, $m\geq 0$ and $r\geq 0$ be integers and let $X\subset \ZZ_p^n$ be a subanalytic set of dimension $m$.
Then there exists a finite collection of subanalytic functions $g_{i}:P_i\subset \ZZ_p^m\to X$ such that the union of the $g_i(P_i)$
equals $X$, the $g_i$ have $C^r$-norm bounded by $1$, and the $g_i$ are approximated by their Taylor polynomials of degree $r-1$ with
remainder of order $r$, globally on $P_i$.
\end{thmblock}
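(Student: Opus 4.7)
The plan is to argue by induction on the dimension $m$, the case $m=0$ being trivial since $X$ is then finite and each point is parametrized by a constant map from $\ZZ_p^0$. For the inductive step I first apply a subanalytic cell decomposition to $X$; lower-dimensional cells are handled by the inductive hypothesis, so the core problem reduces to parametrizing, up to a permutation of coordinates, the graph of a subanalytic function $f\colon U\to\ZZ_p^{n-m}$ on an open subanalytic set $U\subset\ZZ_p^m$. Concretely, I need to find finitely many subanalytic maps $h_j\colon P_j\subset\ZZ_p^m\to U$ whose images cover $U$ and such that each composite $g_j:=(h_j,f\circ h_j)\colon P_j\to\ZZ_p^n$ has every partial derivative up to order $r$ of norm at most $1$ on $P_j$ and satisfies the Taylor remainder estimate $|g_j(x)-T^{r-1}_{g_j,x_0}(x)|\leq |x-x_0|^r$ for all $x,x_0\in P_j$.

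To handle this core reparametrization problem I would apply the subanalytic preparation theorem (in the sense of Denef--van den Dries / Cluckers--Lipshitz) to $f$ and, simultaneously, to all of its partial derivatives of order $\leq r$. This yields a finite partition of $U$ into (generalized) balls on each of which every such derivative has an explicit prepared form, namely a power of $p$ times a monomial in suitably translated coordinates times a strong unit given by a convergent power series. On each such piece, which after an affine $\ZZ_p$-isomorphism can be identified with a ball of the shape $p^N\ZZ_p^m$, I precompose with the dilation $y\mapsto p^Ny$; since $\partial^{\alpha}(f\circ h_j)(y)=p^{N|\alpha|}(\partial^{\alpha}f)(p^N y)$, a sufficiently large $N$ (controlled uniformly by the data of the preparation) simultaneously brings every partial derivative of order at most $r$ into $\ZZ_p$, which produces the $C^r$-norm bound. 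The local-to-global Lipschitz continuity result advertised in the abstract enters here, applied inductively order by order, to upgrade the pointwise control furnished by the preparation into uniform control on the piece.

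The main obstacle, as emphasized in the introduction, is securing the Taylor remainder bound \emph{globally} on each piece, since the total disconnectedness of $\ZZ_p$ rules out deducing it from the $C^r$-norm bound by any mean-value argument. To deal with this I would exploit the very same prepared form: after the dilation, $f\circ h_j$ is itself given on $\ZZ_p^m$ by a convergent power series (obtained from the strong unit in the prepared form), and the difference between such a power series and its degree $r-1$ Taylor polynomial at any base point $x_0\in\ZZ_p^m$ consists, term by term, of monomials of degree $\geq r$ in $x-x_0$, hence is bounded by $|x-x_0|^r$ in ultrametric norm. The delicate coupling point is to arrange \emph{one} partition of $U$ that realizes the preparation of $f$ and of all its derivatives of order $\leq r$ simultaneously, so that the dilation producing the $C^r$-norm bound is the same one producing the Taylor estimate; this is achieved by a nested induction on $r$ carefully interleaved with successive applications of the preparation theorem. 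Patching the finitely many pieces together and composing with the graph map $(\mathrm{id},f)$ then completes the proof.
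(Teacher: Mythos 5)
Your overall plan --- cell decomposition to graphs, precomposition with dilations $y\mapsto p^N y$, and a power-series argument for the Taylor remainder --- is the ``zoom and scale'' shortcut that the paper itself flags in its introduction as a valid alternative for $L=\QQ_p$, exploiting the finiteness of $\ZZ_p/p^N\ZZ_p$. It is genuinely different from the paper's route, which passes to an algebraic closure with quantifier elimination, uses a globally analytic cell decomposition (Theorem \ref{cda}), Gauss-norm lemmas, and composition with power maps $x\mapsto x^N$, so as to treat $\QQ_p$ and $\CC\llp t\rrp$ uniformly.

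There are, however, genuine gaps at the very step you flag as delicate. Preparation of $f$ and its derivatives does not by itself produce, on each piece, a single convergent power series with all coefficients of norm $\leq 1$: the prepared form is a constant of possibly large norm, times a monomial with \emph{rational} exponents, times a strong unit, so the ``term-by-term'' remainder bound you invoke has nothing to act on until the root factors have been resolved and a Gauss-norm bound established. Dilation by $p^N$ alone resolves neither the fractional exponents nor the possibility that $\partial^\alpha f$ is unbounded on the prepared piece; before dilating one must first bound the first-order partials by $1$ via a coordinate-inversion step (the role of Corollary \ref{cor:invers}), which your sketch omits. Finally, your appeal to ``local-to-global Lipschitz continuity applied inductively order by order'' cannot close this gap: the paper notes explicitly at the start of Section \ref{sec:5} that for $r>1$ it is not known whether control on the higher partial derivatives implies the global degree-$(r-1)$ Taylor estimate on a piece, precisely because there is no mean value theorem. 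The power-series argument, equipped with a Gauss-norm bound (supplied in the paper by Lemma \ref{gauss0} and the power maps of Lemma \ref{gauss1a}), \emph{is} the content of the $r>1$ case; the order-by-order Lipschitz idea does not work as stated.
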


For precise definitions of the $C^r$-norm and  approximation by Taylor polynomials of certain degree  with controlled remainder,  we refer to  Definition \ref{Tr}. Note  that  a key point in this non-archimedean statement is that the approximation holds globally on $P_i$, which
represents a challenging goal because of total disconnectedness.
On the opposite, in the real case, it is enough to consider convex charts, since for such charts global Taylor approximation is immediate.
In the core of the paper all these results will be stated and proved uniformly for definable families.   

\subsection{}\label{ssec:1.3}
We end this introduction with a quick overview of our results  over  the base field $\CC\llp t \rrp$.
In this case, the analogue of Theorem  \ref{GYT} essentially still holds, except one has to
replace ``finite'' by ``parametrized by a constructible subset of $\mathbb{C}^s$ for some $s$''.

For each positive integer $r$ we denote by
$\mathbb{C} [t]_{< r}$ the  set of complex polynomials of degree
$<r$.
For any subset  $A$  of
$\mathbb{C} \llp t \rrp^n$
we denote by $A_r$ the set
$A \cap (\mathbb{C} [t]_{< r})^n$ and
by $n_r(A)$ the dimension of the Zariski closure of
$A_r$ in
$(\mathbb{C} [t]_{< r})^n \simeq \mathbb{C}^{nr}$.
When  $X$ is  an algebraic subvariety of
$\mathbb{A}^n_{\mathbb{C} \llp t \rrp}$ of dimension $m$, for every positive integer $r$ one has  the  basic estimate
$n_r (X) \leq r m$ (cf. Lemma \ref{trivest}). Assume $X$ is  irreducible of degree $d$.
The  main result of Section \ref{sec:4},
Theorem \ref{motivicBP}, states
 that,
for every positive integer $r$, one has
$$
n_r (X) \leq r (m - 1)  + \Bigl\lceil\frac{r}{d}\Bigr\rceil,
$$
which is an improvement on the basic estimate as soon as $X$ is not linear.

This result can be seen as an instance of motivic point counting, like in
the paper \cite{MHZ}, where in a somewhat different context a detailed study of the motivic height zeta function leads to asymptotic estimates for dimensions of spaces of sections.
It is a motivic analogue of Pila's results of \cite{pila_ast}.
As in
\cite{pila_ast}, we reduce to the case of plane curves
by Lang-Weil type arguments. However, to prove the result in this case, we do not follow the original method of Bombieri and Pila in \cite{BP}, which seems difficult to adapt
in this setting.  We follow instead  a strategy   introduced by Marmon in \cite{Marmon}, which relies on  the  Yomdin-Gromov Lemma in place of the elaborate analytic arguments used in  \cite{BP}.

\subsection{Some shortcuts}

Although the general results on Lipschitz continuity of Theorems \ref{Lip} and \ref{Lipmixed} are used to prove Theorems \ref{GYT} and \ref{GYTs}, as far as our applications to points of bounded height are concerned, they are not needed in full generality.
First, in the $p$-adic setting of Section \ref{sec:3},
Theorems \ref{Lip} and \ref{Lipmixed} are not new since they are proved in \cite{CCL} and even appear in a slightly sharper form in \cite{ch}.
Secondly, in Section \ref{sec:4} when working over $\CC\llp t\rrp$, only the one-dimensional case of Theorem \ref{GYT}, namely with $m=1$, is used. This case of Theorem \ref{GYT} only relies on the one-dimensional case of Theorem \ref{Lip}, which can be proved similarly as the one-dimensional case of \cite{CCL}. Such one-dimensional cases require less work and are easier to prove than the general cases. For example, for the case of general dimension for Theorems \ref{Lip} and \ref{Lipmixed} one cannot use definable sections as in \cite{CCL} and we had to find the alternative approach via Theorem \ref{Lipcenter}; in the one-dimensional case definable sections were not used in \cite{CCL}.
Another simplification, in the $p$-adic case, would be to exploit the fact that the residue rings $\ZZ_p/(p^n)$ are finite. Indeed, this finiteness allows one to zoom and scale as in the real case, and this can serve as an alternative
to the passage to the algebraic closure of $L$ when proving theorems \ref{GYT} and \ref{GYTs} for $L=\QQ_p$. This zoom and scale technique would not work in the case of $K=\CC\llp t\rrp$, unless one is content in Theorem \ref{motivicBP} with a weakening of the upper bound to $r (m - 1) + r/d + c_d$, for some constant depending on $d$.

\subsection*{Acknowledgments}
\hspace{0.5cm}We would like to thank Antoine Chambert-Loir, Immanuel Halupczok and Ehud Hrushovski for stimulating discussions related to this work. In particular, Antoine Chambert-Loir
 directed us to Marmon's paper, Immanuel Halupczok provided an example that finitely many pieces do not suffice in general for Theorem \ref{Lip}, and Ehud Hrushovski encouraged us to
consider potential diophantine applications.

The authors were supported in part by the European Research Council under the European Community's Seventh Framework Programme (FP7/2007-2013) with ERC Grant Agreement numbers 246903 NMNAG and 615722 MOTMELSUM, and by the Labex CEMPI  (ANR-11-LABX-0007-01). We would also like to thank the IHES  and the FIM of the ETH in Z\"urich, where part of the research was done. The paper was finalized while the authors participated in the MSRI program: Model Theory, Arithmetic Geometry and Number Theory.

\section{Piecewise Lipschitz continuity in tame non-archimedean geometry}\label{sec:5}

In this section and in the next one we prove   non-archimedean analogues of the Yomdin-Gromov parametrization lemma.  In the same time, we prove that our parametrizations can be uniformly approximated by their Taylor polynomials. The ability to approximate  parametrizations by their Taylor polynomials with small error terms is key for counting points of bounded height in the non-archimedean case, as well as  in the real case, although in the real case this approximation is classical when the domain is convex and boundeof  order $\ge 1$ (see Theorem \ref{GYT}).


The Yomdin-Gromov parametrization lemma, as generalized by Pila and Wilkie in \cite{PiWi}, gives for any bounded definable set $X$ of dimension $m$ in $\RR^n$ (in an o-minimal structure on $\RR$) and any integer $r>0$ a finite collection of definable maps $f_i:[0,1]^m\to X$ whose ranges together cover $X$ and whose $C^r$-norms are bounded by $1$. By convexity of $[0,1]^m$, and techniques based on the mean value theorem, one can control the error terms when approximating $f_i$ by a Taylor polynomial of degree $r-1$. Both convexity and the mean value theorem do not carry to our context. For $r>1$, we do not know, even for $p$-adic semialgebraic functions $f$ on $\ZZ_p^m$, whether having small $C^r$-norm allows one to piecewise control the error term, globally on each piece, when one approximates $f$ by a Taylor polynomial of degree $r-1$. In the case where $r=1$, the desired approximation coincides with Lipschitz continuity, and the study of piecewise Lipschitz continuity, in a general non-archimedean context, is the content of Sections \ref{sec:5b} and \ref{sec:mix}. From Section \ref{sec:GYT} on, we will place ourselves in a more concrete framework of definable sets in complete, discretely valued fields (possibly with extra restricted analytic functions in the language), to treat $C^r$-parametrizations with good Taylor approximation when $r>1$.

\subsection{Lipschitz continuity in tame non-archimedean geometry}\label{sec:5b}

In \cite{CCL}, piecewise Lipschitz-continuity for a semialgebraic or subanalytic function $f:X\subset \QQ_p^n\to \QQ_p$ is shown to hold whenever $f$ is locally Lipschitz continuous with a fixed Lipschitz constant. Moreover, the pieces can be taken to be definable. In \cite{ch}, the Lipschitz constants were further controlled in an optimal way when going from local to global on each piece. In this section, we extend the result of \cite{CCL} in two ways, namely to many new structures with a non-archimedean geometry, including $\CC\llp t \rrp$, and to other languages than the semialgebraic and subanalytic ones, including some weaker languages without multiplication.
When the residue field is not finite, one is led to replace finite definable partitions by  definable families with parameters running over the residue field.
Our study of Lipschitz continuity is subdivided into two cases:  equicharacteristic zero  and  mixed characteristic. Both are axiomatically treated.
In the mixed characteristic case, residue rings, and not only the residue field, are used.

\medskip

We first introduce the set-up   adapted to the equicharacteristic zero case.
The typical example to have in mind is that of henselian valued field $K$ of equicharacteristic zero with (multiplicatively written) value group $\Gamma^{\times}$ and residue field $k$. Then in  Section \ref{sec:mix} we will consider the mixed characteristic case.

\medskip

Let $\Gamma=\Gamma^\times \cup \{0\}$ be the disjoint union of a nontrivial ordered abelian group $\Gamma^\times$ with a minimal element $\{0\}$, where the group operation on $\Gamma^\times$ is written multiplicatively, and where we put $0 \cdot g=g \cdot 0=0$ for all $g\in \Gamma$. Recall that an ordered abelian group is an abelian group with a total order $<$ such that $a<b$ implies $ac<bc$ for all elements $a,b,c$ of the group. Such a group is automatically torsion free, and hence, the order $<$ has no endpoints.
Let $K$ be an additively written abelian group and suppose we are given a surjective map $|\cdot|:  K\to \Gamma: x\mapsto |x| $ with the following properties for all $x,y\in K$
\begin{itemize}
\item[$\bullet$] $|x|=0$ if and only if $x=0$,
\item[$\bullet$] $|x|=|-x|$,
\item[$\bullet$]
$
|x+y| \leq \max (|x| , |y| ),
$
\item[$\bullet$] if $|x|>|y|$, then $|x+y| = |x|$.
\end{itemize}

An open ball is by definition a subset $B\subset K$ of the form $\{x\in K\mid |x-a|< \gamma \}$ for some $\gamma \in \Gamma^\times$ and $a\in K$; such $\gamma$ is unique and is called the radius of the open ball $B$ (not to be confused with the radii of closed balls defined in Section \ref{sec:2}). Since $\Gamma^\times$ has no endpoints, each open ball is an infinite set.

Consider a set $k$ containing a special element $0$ and write $k^\times$ for $k\setminus \{0\}$. Suppose that we are given a surjective map $\ac:K\to k$ with $\ac{\ }^{-1}(0)=\{0\}$ and such that for each $\xi\in k^\times $ and $\gamma \in \Gamma^\times$, the set
$$
\{t\in K\mid \ac(t)=\xi,\ |t|=\gamma\}
$$
is an open ball of radius $\gamma$.
Let us more generally introduce the notation
$$
A_{\xi,\gamma} := \{t\in K\mid \ac(t)=\xi,\ |t|=\gamma\},\ \mbox{ for
$\xi\in k$ and $\gamma\in \Gamma$.}
$$
Note that the family of sets $A_{\xi,\gamma}$ is a disjoint family whose union equals $K$ when $\xi$ varies in $k$ and $\gamma$ in $\Gamma$. Clearly $
A_{0,0}$ equals $\{0\}$, and both $A_{0,\gamma}$ and $A_{\xi,0}$ are empty for nonzero $\gamma$ and nonzero $\xi$.

We put on $K$ the valuation topology, that is, the topology with the collection of open balls as base, and
the product topology on Cartesian powers of $K$. 
Note that $K$ thus becomes a topological group.
For a tuple $x=(x_1, \cdots, x_n)\in K^n$, $|x|$ stands for $\max_{i\in \{1, \cdots, n\}} |x_i|$.

Next we recall the definition of Lipschitz continuity and we define a special variant of continuity, called s-continuity.

\begin{defn}\label{def:lip}
Let a function $f: X \to K$ be given, with $X\subset K^n$. 
For $\gamma\in \Gamma^\times$, the  function $f: X\subset K^n \to K$
is called $\gamma$-Lipschitz (globally on $X$) if for all $x$ and
$y$ in $X$,
$$
|f(x) -f(y)| \leq \gamma | x - y| .
$$
The function $f$ is called locally $\gamma$-Lipschitz
if every point of $ X$ has a neighbourhood on which $f$ is $\gamma$-Lipschitz.

\end{defn}

\begin{defn}[s-continuity]\label{defjacprop}
Let $F:A\to K$ be a function for some set $A\subset K$. Say that $F$
is s-continuous if for each open ball $B\subset A$ the set $F(B)$ is either a singleton or an open ball, and,
there exists $\gamma= \gamma (B)\in \Gamma$ such that
\begin{equation}\label{s-gamma}
|F(x)-F(y)| = \gamma |x-y|\ \mbox{ for all $x,y\in B$}.
\end{equation}
\end{defn}

If a function $g:U\subset K^n\to K$ on an open $U$ is $s$-continuous in, say, the variable $x_n$, by which we mean that $g(a,\cdot)$ is $s$-continuous for each choice of $a=(x_1,\ldots,x_{n-1})$ then we write $|\partial g/\partial x_n ( a,x_n)|$ for the element $\gamma\in \Gamma$ witnessing the s-continuity of $g(a,\cdot)$ locally at $x_n$, namely, $\gamma$ satisfies (\ref{s-gamma}) for the function $F(\cdot)=g(a,\cdot)$, where $x,y$ run over some ball $B$ containing $x_n$ such that $\{a\}\times B\subset U$.

\par

Note that for an $s$-continuous function $F:A\subset K\to K$ on an open $A$ such that $F$ is moreover $1$-Lipschitz, one has $|\partial F(x)/\partial x| \leq 1$ for all $x\in A$. Hence, for such $F$, for $x\in A$ with $|\partial F(x)/\partial x| >0$, and for any ball $B\subset A$ containing $x$, say, of radius $r_B$, the set $F(B)$ is a ball of radius $\leq r_B$. Moreover, for compositions of s-continuous functions one has a certain form of the chain rule which corresponds to the classical chain rule for differentiation, cf.~Lemma \ref{cor:invers} and its proof.

\medskip

Let $\cL_{\rm Basic}$ be the first order language with the sorts $K$, $k$ and $\Gamma$, and symbols for addition on $K$, for $\ac:K\to k$, $|\cdot|:K\to\Gamma$, and for the order and the multiplication on $\Gamma$.
Let $\cL$ be any expansion of $\cL_{\rm Basic}$. 
By $\cL$-definable we mean $\emptyset$-definable in the language $\cL$, and likewise for other languages than $\cL$.
Write $K^0=\{0\}$, $k^0=\{0\}$, and $\Gamma^0=\{0\}$, with a slight abuse of notation.
Note that $\cL$ may have more sorts than $\cL_{\rm Basic}$, since it is an arbitrary expansion.
\begin{example}
This language $\cL_{\rm Basic}$ is very basic (since it does not have multiplication), and can be interpreted in many structures.
We give an example of a triple $(K,k,\Gamma)$ with $\cL_{\rm Basic}$-structure.
Let $K$ be the Laurent series field $\FF_p\llp t \rrp $, seen as a group for addition, put $\Gamma^\times := 2^{\ZZ}$, let $|\cdot|$ be the $t$-adic norm with $|t|=2^{-1}$ on $K$, $k$ the finite field $\FF_p$, and let $\ac$ send a nonzero Laurent series $a(t)$ to the coefficient of its lowest degree nonzero term. A more natural example of an $\cL$-structure with $\cL$ being $\cL_{\rm Basic}$ together with multiplication on $K$, is for the field $K=\CC\llp t\rrp$ with $t$-adic norm and $\ac$ defined as for $\FF_p\llp t \rrp $.
\end{example}

\begin{defn}[Tame configurations]\label{config}
Given integers $a\geq 0$, $b\geq 0$, a set
$$
T\subset K\times k^a\times \Gamma^b,
$$
and some $c\in K$, say that $T$ is in $c$-config if there is $\xi\in k$ such that
$T$ equals the union over $\gamma\in \Gamma$ of sets
$$(c + A_{\xi,\gamma})\times U_{\gamma}$$
for some $U_{\gamma}\subset  k^a\times \Gamma^b$.
If moreover $\xi\not=0$ we speak of an open $c$-config, and if $\xi=0$ we speak of a graph $c$-config.
If $T$ is nonempty and in $c$-config, then $\xi$ and the sets $U_\gamma$ such that $A_{\xi,\gamma}$ is nonempty are uniquely determined by $T$ and $c$.

Say that $T \subset K\times k^a\times \Gamma^b$ is in $\cL$-tame config if there exist $s \geq 0$ and $\cL$-definable functions
$$g:K\to k^s\ \mbox{ and }\ c:k^s\to K\
$$
such that the range of $c$ contains no open ball,
and, for each $\eta\in k^s$, the set $$T\cap (g^{-1}(\eta) \times k^a \times \Gamma^b  )$$
is in $c(\eta)$-config.
\end{defn}


By the aforementioned uniqueness in the nonempty case, one sees that,
for an $\cL$-definable set $T$ which is in $c$-config, the collection of sets $U_{\gamma}$ can be taken to be an $\cL$-definable family.
The functions $g$ and $c$ used in a $\cL$-tame config are in general not unique, but still one often calls $c$ the center (of the configuration).



\begin{defn}\label{test}
For any $\cL$-structure $M$ which is elementarily equivalent to $(K,\cL)$ and for any language $L$ which is obtained from $\cL$ by adding some elements of  $M$ (of any sort) as constant symbols, call $(M,L)$ a test pair for $(K,\cL)$.
\end{defn}

\begin{defn}[Tameness]\label{Ktame}
Say that $(K,\cL)$ is weakly tame if the following conditions hold.
\begin{enumerate}
\item[(1)] Each $\cL$-definable set $T\subset K\times k^a\times \Gamma^b$ with $a\geq 0$, $b\geq 0$ is in $\cL$-tame config.

\item[(2)] 
For any $\cL$-definable function $F:X\subset K\to K$ there exist $s\geq 0$ and an $\cL$-definable function
$g:X\to k^s$ such that, for each $\eta\in k^s$, the restriction of $F$ to $g^{-1}(\eta)$ is s-continuous.
\end{enumerate}
Say that $(K,\cL)$ is tame when each test pair $(M,L)$ for $(K,\cL)$ is weakly tame.
Call an $\cL$-theory $\cT$ tame if for each model $\cM$ of $\cT$, the pair $(\cM,\cL)$ is tame.
\end{defn}

Condition (2) is a substitute for the so-called Jacobian property which holds for henselian valued fields in equicharacteristic zero equipped with the Denef-Pas language.
We
refer to Theorem 6.3.7 of \cite{CLip} for a closely related Jacobian property; it can be adapted to the Denef-Pas language
using
 Theorem 4.1 of \cite{Pas1} on elimination of valued field quantifiers.
For henselian valued fields in  mixed characteristic  equipped with the generalized Denef-Pas language, see Section \ref{sec:mix}.
In \cite{Hal-W}, Definition 2.19 and Theorem 5.12, one will find a version of the Jacobian property in higher dimensions.
Some examples of tame structures are provided in Section \ref{ex-tame}.

We can now state our first main result on Lipschitz continuity, going from local to piecewise global on parts parametrized by variables running over $k$.

\begin{thm}\label{Lip}
Suppose that $(K,\cL)$ is tame.
Let $f:X\subset K^n\to K$ be an $\cL$-definable function which is locally $1$-Lipschitz. Then there exists an $\cL$-definable function
$$
g:X\to k^s
$$
for some $s\geq 0$ such that for each $\eta\in k^s$, the restriction of $f$ to $g^{-1}(\eta)$ is $1$-Lipschitz.
\end{thm}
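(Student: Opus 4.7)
\emph{Proof plan.} I would argue by induction on the arity $n$, reducing the $n$-variable statement to the one-variable one by treating the coordinates one at a time.

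For the base case $n=1$, first apply condition (2) of tameness to $f$, yielding an $\cL$-definable partition $g_0 \colon X \to k^{s_0}$ on each fibre of which $f$ is $s$-continuous; one may thus assume $f$ is $s$-continuous on all of $X$. Next apply condition (1) to $X \subset K$ to refine by a parameter $\eta \in k^{s_1}$ so that each fibre is in $c(\eta)$-config, that is, a disjoint union $\bigsqcup_\gamma (c(\eta) + A_{\xi,\gamma})$ of open balls arranged around the center $c(\eta)$. On each individual ball $B = c + A_{\xi,\gamma}$, $s$-continuity supplies a single constant $\gamma_B \in \Gamma$ with $|f(x)-f(y)| = \gamma_B |x-y|$ for all $x,y \in B$; the local $1$-Lipschitz hypothesis forces $\gamma_B \le 1$, so $f$ is $1$-Lipschitz on $B$.

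The delicate remaining point in the base case is to glue across the distinct balls $c + A_{\xi,\gamma}$: for $x_i \in c + A_{\xi,\gamma_i}$ with $\gamma_1 < \gamma_2$ one has $|x_1 - x_2| = \gamma_2$, and the goal is $|f(x_1)-f(x_2)| \le \gamma_2$. For this I would invoke Theorem \ref{Lipcenter} --- the replacement for the definable-section argument of \cite{CCL} advertised in the paper's shortcuts --- to produce, after a possible further $k$-refinement, a common reference point $w$ satisfying $|f(x) - w| \le |x - c|$ for every $x$ in the fibre. The ultrametric inequality then gives $|f(x_1)-f(x_2)| \le \max(|f(x_1)-w|,|f(x_2)-w|) \le \max(\gamma_1,\gamma_2) = |x_1-x_2|$, completing the base case.

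For the inductive step, apply the already-proved one-variable case to $f$ viewed as a function of $x_{n+1}$ alone with parameters $(x_1,\dots,x_n)$ --- legitimate because tameness in the sense of Definition \ref{Ktame} passes to every test pair and thus admits parameters. This yields a $k$-parametrized refinement of $X$ on each piece of which $f$ is $1$-Lipschitz in the last variable uniformly in the others. A symmetric application of the inductive hypothesis to $f(\,\cdot\,,x_{n+1})$ in the first $n$ variables, with $x_{n+1}$ as a parameter, refines further so that $f$ is $1$-Lipschitz in each of the first $n$ coordinates. After additionally decomposing the pieces into ones with a product-like structure (by iterated use of tame configs) so that the intermediate telescoping points lie in $X$, one writes $f(y)-f(x) = \sum_i [f(y_{\le i},x_{>i}) - f(y_{<i},x_{\ge i})]$, uses coordinate-wise $1$-Lipschitz continuity on each term, and concludes $|f(y)-f(x)| \le |y-x|$ by the ultrametric inequality. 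The main obstacle is the one-variable gluing step: producing the common reference point $w$ for the images of the concentric balls $c + A_{\xi,\gamma}$. This is precisely the totally-disconnected phenomenon flagged in the introduction, where one cannot, unlike in the real setting, appeal to path-connectedness to propagate local to global control; a secondary subtlety is maintaining a product-like structure on pieces in the inductive step so the telescoping lives inside $X$.
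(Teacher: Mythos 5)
Your base case $n=1$ is essentially correct and matches the paper: one reduces to $f$ s-continuous with $X$ in $c$-config, applies Theorem~\ref{Lipcenter} to arrange that $f(X)$ is in $d$-config, and then s-continuity plus local $1$-Lipschitz continuity give $|f(x)-d|\le |x-c|$, after which the ultrametric inequality closes the gap between distinct maximal balls of $X$.

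The inductive step, however, has a genuine gap, and the ``secondary subtlety'' you flag at the end is in fact fatal to the proposed route. You want to reduce to pieces with a product-like structure and then telescope through the intermediate points $(y_{\le i},x_{>i})$. But a $k^s$-valued refinement cannot in general flatten the twist: the radii of the fibers $X_{\hat x}$ depend on $\hat x$ through $\Gamma$-valued data, which cannot be absorbed into finitely many residue-field parameters. For a concrete obstruction take $n=2$ and $X=\{(x_1,x_2):|x_2|<|x_1|<1\}$; if $|y_1|<|x_2|$ then $(y_1,x_2)\notin X$, and no $k^s$-refinement makes the fibers independent of $|x_1|$. So the telescoping identity is not available on any piece of the kind your argument produces.

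What the paper does instead is to avoid full telescoping entirely and split into two cases. After arranging (via the inductive hypothesis, Corollary~\ref{cor:invers}, and Theorem~\ref{Lipcenter}) that $X$ is open, that $f(\hat x,\cdot)$ is s-continuous, that $f(\cdot,x_n)$ is $1$-Lipschitz, that $X_{\hat x}$ is in $c(\hat x)$-config and the fiberwise image is in $d(\hat x)$-config, and crucially that \emph{both} $c$ and $d$ are $1$-Lipschitz, one normalizes $d\equiv 0$. If $x_n,y_n$ lie in the same ball inside $X_{\hat x}$ (or, symmetrically, inside $X_{\hat y}$), a single telescoping step through $(\hat x, y_n)\in X$ suffices --- only one intermediate point is needed, and it lies in $X$ by assumption. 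In the complementary case one does not telescope at all: instead $|f(x)|\le |x_n-c(\hat x)|\le |x_n-y_n|$ and likewise for $y$, and the ultrametric inequality gives $|f(x)-f(y)|\le |x-y|$ directly. This is exactly the ``center trick'' you invoke for gluing in the base case; the essential content of Theorem~\ref{Lipcenter} for general $n$ is that the $1$-Lipschitz center $d$ of the range makes that trick work uniformly in $\hat x$, which is what your plan misses when it looks for a product structure.
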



Theorem \ref{Lip} is complemented by Theorem \ref{Lipcenter} about simultaneous partitions of domain and range into parts with $1$-Lipschitz centers. This is an improvement of Proposition 2.4 of \cite{CCL}, where this is done for the domain only, and only in the $p$-adic case.

For $h:D\subset A\times B\to C$ any function between sets and for $a\in A$, write $D_a$ for the set $\{b\in B\mid (a,b)\in D\}$ and write $h(a,\cdot)$ or $h_a$ for the function which sends $b\in D_a$ to $h(a,b)$. We use similar notation $D_a$ and $h(a,\cdot)$ or $h_a$ when $D$ is a Cartesian product $\prod_{i=1}^n A_i$ and $a\in p(D)$ for some coordinate projection $p:D\to \prod_{i\in I\subset \{1,\cdots, n\}} A_i$.

\begin{thm}[Lipschitz continuous centers in domain and range]\label{Lipcenter}
Suppose that $(K,\cL)$ is tame.
Let $f:A\subset K^n\to K$ be an $\cL$-definable function which is locally $1$-Lipschitz. Then, for a finite partition of $A$ into definable parts, the following holds for each part $X$.
There exist $s\geq 0$, a coordinate projection $p:K^n\to K^{n-1}$ and
$\cL$-definable functions
$$
g:X\to k^s,\
c:k^s\times K^{n-1}\to K \mbox{ and } d:k^s\times K^{n-1}\to K
$$
such that, for each $\eta\in k^s$, the restrictions of $c(\eta,\cdot)$ and $d(\eta,\cdot)$ to $p(g^{-1}(\eta))$ are $1$-Lipschitz and, for each $w$ in $p(K^{n})$, the set $g^{-1}(\eta)_w$ is in $c(\eta,w)$-config and the image of $g^{-1}(\eta)_w$ under $f_w$ is in $d(\eta,w)$-config.
\end{thm}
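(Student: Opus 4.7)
The plan is to combine Theorem \ref{Lip} with successive uses of conditions (1) and (2) of tameness, first for the domain configuration and then for the range configuration. First I will apply Theorem \ref{Lip} to $f$ itself, refining the partition of $A$ by an $\cL$-definable map to some $k^{s_0}$, so that $f$ becomes globally $1$-Lipschitz on each piece. I will then further partition $A$ into finitely many definable subsets so that on each such subset $X$ a fixed coordinate projection $p: K^n \to K^{n-1}$ forgetting some $i$-th coordinate is to be used; write $x = (x_i, w)$ with $w = p(x)$.

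For the domain center, I will apply condition (1) of tameness to $X$ in every test pair obtained by adding the components of $w$ as constants. Compactness together with a definable-families argument will produce $s \geq 0$ and $\cL$-definable maps $g: X \to k^s$ and $c: k^s \times K^{n-1} \to K$ such that, for each $\eta$ and $w$, the set $g^{-1}(\eta)_w \subset K$ is in $c(\eta, w)$-config; a further refinement of $g$ using condition (2) will ensure that each restriction $f(\cdot, w)|_{g^{-1}(\eta)_w}$ is s-continuous. After one more refinement of $g$ arranging that $c(\eta, \cdot)$ is locally $1$-Lipschitz on $p(g^{-1}(\eta))$, a further application of Theorem \ref{Lip} upgrades this to the piecewise global $1$-Lipschitz property demanded by the statement.

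For the range center, I will set $d(\eta, w) := f(c(\eta, w), w)$, extending $f$ by its natural s-continuous limit at $c(\eta, w)$ when this point does not lie in $X_w$. Using that $f$ is $1$-Lipschitz on $K^n$ and that $c(\eta, \cdot)$ is $1$-Lipschitz, the ultrametric inequality yields
\begin{equation*}
|d(\eta, w) - d(\eta, w')| \leq \max(|c(\eta, w) - c(\eta, w')|,\, |w - w'|) \leq |w - w'|,
\end{equation*}
so $d(\eta, \cdot)$ is $1$-Lipschitz on $p(g^{-1}(\eta))$. To see that $f_w(g^{-1}(\eta)_w)$ is actually in $d(\eta, w)$-config, I will invoke s-continuity of $f(\cdot, w)$: each ball $c(\eta, w) + A_{\xi, \gamma}$ in the decomposition of $g^{-1}(\eta)_w$ is sent by $f(\cdot, w)$ to an open ball of radius $|\partial f_w/\partial x_i (c(\eta,w))|\cdot \gamma$, and the disjoint union of these images realises the required configuration, with residue direction $\xi'$ determined by the ``derivative residue'' of $f_w$ at $c(\eta, w)$.

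The main obstacle is the step that makes $c$ locally $1$-Lipschitz in $w$: condition (1) of tameness supplies a center whose range contains no open ball, but gives no continuity of $c$ in $w$ for free. I will exploit this thin-range restriction together with condition (2) of tameness---applied to $c$ viewed as a function in each component of $w$---to show that after one final definable refinement each $c(\eta, \cdot)$ becomes s-continuous and hence locally $1$-Lipschitz, making Theorem \ref{Lip} available. This extends the $p$-adic argument of Proposition 2.4 of \cite{CCL}, which only treated domain centers, to the present general tame setting and, simultaneously, to range centers.
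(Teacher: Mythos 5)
Your proposal has two significant problems: a circularity in the inductive structure, and an oversimplified treatment of the range center $d$.

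\textbf{Circularity.} Theorems \ref{Lip} and \ref{Lipcenter} are proved in the paper by a joint induction on $n$; the proof of Theorem \ref{Lip} at level $n$ explicitly assumes Theorem \ref{Lipcenter} at level $n$, while the proof of Theorem \ref{Lipcenter} at level $n$ may only invoke both theorems at level $n-1$. Your very first step---``apply Theorem \ref{Lip} to $f$ itself'' to make $f$ globally $1$-Lipschitz on each piece---applies Theorem \ref{Lip} to a function of $n$ variables inside the proof of Theorem \ref{Lipcenter} for $n$, which is circular. The paper avoids this by only using Theorem \ref{Lip} for the $(n-1)$-variable functions $f(\cdot, x_n)$, $c$, and $w \mapsto f(w,c(w))$. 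The subsequent ultrametric estimate $|d(\eta,w)-d(\eta,w')| \le \max(|c(\eta,w)-c(\eta,w')|, |w-w'|)$ relies on global $1$-Lipschitz continuity of $f$ in all $n$ variables, which you do not have available at that point.

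\textbf{The definition of $d$.} You set $d(\eta, w) := f(c(\eta, w), w)$, extending $f$ by an ``s-continuous limit'' when $c(\eta, w) \notin X_w$. This conflates the two cases that the paper carefully separates. When the graph of $c$ lies inside $X$ (Case 2 of the paper), your formula is indeed what the paper uses, and the $1$-Lipschitz estimate for $d$ is derived from Theorem \ref{Lip} applied to the $(n-1)$-variable function $w \mapsto f(w,c(w))$. But when $c(\eta, w) \notin X_w$ (after the bi-$1$-Lipschitz change of variables, this is Case 1, where the domain center is $0$ and $0 \notin X^{(0)}_w$), s-continuity only constrains $f_w$ on balls contained in $X_w$; it gives no control over the relation between $f_w$ restricted to $X_w$ and a hypothetical value at a point outside the domain. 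The disjoint spherical shells $A_{\xi,\gamma}$ approaching the center can be sent by $f_w$ to balls whose distances to any fixed candidate center $d(\eta,w)$ do not equal their radii, so the image need not be in $f_w(c(\eta,w))$-config, and the ``derivative residue'' heuristic you invoke does not hold globally. In that case the paper instead shows directly, via an ultrametric contradiction using s-continuity of $d_v$ and the Remark \ref{rem:comp} correspondence of maximal balls, that the image configurations $Y_{v,t}$ are locally independent of $t$ and hence that the auxiliary $d'(v)$-config already in hand does the job. Your proposal is missing this case analysis, which is the genuinely new content of Theorem \ref{Lipcenter} beyond Proposition 2.4 of \cite{CCL}.

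Your discussion of the $c$ part (making the domain center $1$-Lipschitz via s-continuity as in \cite{CCL}) does match the paper's approach, but the argument for $d$ needs to be rebuilt along the lines of the paper's Case 1/Case 2 dichotomy, and the inductive hypotheses need to be restricted to levels $\le n-1$.
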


Note that the projection $p$ in Theorem \ref{Lipcenter} {\sl a priori} depends on the part $X$.
Theorems \ref{Lip} and \ref{Lipcenter} are proved by a joint induction on $n$. By the improvement of Proposition 2.4 of \cite{CCL} given by Theorem \ref{Lipcenter}, we can avoid the usage of definable sections (Skolem functions), which were heavily used in \cite{CCL}. This is especially helpful since one does not have definable Skolem functions in the general context of tameness. Let us first explain the general strategy of the proofs. We first prove some general, although easy, results about sets and functions in tame structures, from statement \ref{fin} up to \ref{invers}. An analogue of the chain rule for derivation can be used, based on s-continuity. The Lipschitz continuity of  $c$
in Theorem \ref{Lipcenter} is proved as in \cite{CCL}, as well as the case $n=1$ of Theorem \ref{Lip}.
What is new here is that   $d$ in Theorem \ref{Lipcenter} can be  required to be Lipschitz continuous as well. Working piecewise in the proof of Theorem \ref{Lip}, one may, after some triangular transformation, assume that the centers of domain and of range are both zero. In which case
 the comparison of distances in domain and in range becomes easier.

\medskip

We  prove preliminary statements in view of Theorems \ref{Lip} and \ref{Lipcenter}.

\begin{lem}\label{fin}
Suppose that $(K,\cL)$ is tame.
If $h: k^a\times \Gamma^b\to K$ is $\cL$-definable for some $a,b\geq 0$, then the image of $h$ contains no open ball.
\end{lem}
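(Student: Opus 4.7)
The plan is to apply condition (1) of tameness to the graph of $h$, packaged as a definable subset of $K\times k^a\times\Gamma^b$ with the value of $h$ placed in the $K$-slot, and then to exploit the fact that $h$ is a function (hence single-valued) to force any configuration appearing on the $K$-side to be trivial.

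Concretely, I would first set
$$
T = \{(h(u,v),u,v) : (u,v)\in k^a\times\Gamma^b\}\subset K\times k^a\times\Gamma^b,
$$
which is $\cL$-definable. By tameness of $(K,\cL)$, condition (1) of Definition \ref{Ktame} yields $s\geq 0$ and $\cL$-definable maps $g:K\to k^s$ and $c:k^s\to K$ with range of $c$ containing no open ball, such that for each $\eta\in k^s$ the slice $T\cap(g^{-1}(\eta)\times k^a\times\Gamma^b)$ is in $c(\eta)$-config. Unpacking, there exist $\xi(\eta)\in k$ and $U_{\gamma,\eta}\subset k^a\times\Gamma^b$ with
$$
T\cap(g^{-1}(\eta)\times k^a\times\Gamma^b)=\bigcup_{\gamma\in\Gamma}(c(\eta)+A_{\xi(\eta),\gamma})\times U_{\gamma,\eta}.
$$

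The key step is to rule out slices with $\xi(\eta)\neq 0$. If such a slice were nonempty, then $U_{\gamma,\eta}\neq\emptyset$ for some $\gamma$, and necessarily $\gamma\neq 0$ (since $A_{\xi,0}$ is empty whenever $\xi\neq 0$, as $\ac(0)=0$); hence $c(\eta)+A_{\xi(\eta),\gamma}$ would be a genuine open ball. Fixing any $(u,v)\in U_{\gamma,\eta}$ would then force $h(u,v)=x$ for every $x$ in that ball, which has infinitely many elements because $\Gamma^\times$ has no minimum, contradicting single-valuedness of $h$. So every nonempty slice has $\xi(\eta)=0$; but $A_{0,\gamma}$ is nonempty only at $\gamma=0$, where it equals $\{0\}$, so the $K$-projection of each nonempty slice is contained in $\{c(\eta)\}$. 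Projecting $T$ onto $K$ recovers precisely the image of $h$, which is therefore contained in the range of $c$ and consequently contains no open ball.

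I expect no real obstacle here: the whole argument hinges on the packaging trick of placing the value of $h$ in the $K$-slot of the config, so that the singleton-fiber condition coming from $h$ being a function clashes with the open-ball structure of any nondegenerate config. The only points that require a small sanity check are the emptiness of $A_{\xi,0}$ for $\xi\neq 0$, the identity $A_{0,0}=\{0\}$, and the fact that open balls of nonzero radius are infinite—all immediate from the definitions at the start of Section \ref{sec:5b}.
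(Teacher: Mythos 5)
Your proof is correct and follows essentially the same route as the paper's: form the graph of $h$ inside $K\times k^a\times\Gamma^b$, apply tame config, and use single-valuedness of $h$ to force each nonempty slice's $K$-part to degenerate to $\{c(\eta)\}$, so the image lands inside the range of $c$.
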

\begin{proof} Let $h: k^a\times \Gamma^b\to K$ be $\cL$-definable and let
$T\subset K\times k^a\times \Gamma^b$ be the graph of $h$, with the natural identification. Now take $g:K\to k^s$ and $c:k^s\to K$
such that the range of $c$ contains no open ball and such that $T_\eta:= T\cap (g^{-1}(\eta)\times  k^a\times \Gamma^b )$
is in $c(\eta)$-config, for any $\eta\in k^s$.
By Definition \ref{config} there exist sets $U_{\gamma,\eta}\subset  k^a\times \Gamma^b$
such that $T_\eta$ equals the union of
$$ (c(\eta) + A_{\xi,\gamma})\times U_{\gamma,\eta}$$
over $\gamma\in G$. From this description as a Cartesian product together with the fact that $T$ is the graph of $h$, it follows that $U_{\gamma,\eta}$ is empty whenever $A_{\xi,\gamma}$ contains more than one element. Moreover, whenever $A_{\xi,\gamma}$ is a singleton one has $A_{\xi,\gamma}=\{0\}$. Hence, the range of $h$ is contained in the range of $c$, which contains no open ball.
\end{proof}

The next proposition has to be compared with the real monotonicity theorem (see \cite{Dri}, (1.2) Chapter 3).

\begin{prop}[Injectivity versus constancy]\label{inj-cons}Suppose that $(K,\cL)$ is tame.
Let $F:X\subset K\to K$ be $\cL$-definable. Then there exist $s\geq 0$ and an $\cL$-definable function
$
g:X\to k^s
$
such that for each $\eta\in k^s$ the restriction of $F$ to $g^{-1}(\eta)$ is injective or constant.
\end{prop}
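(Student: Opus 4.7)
My plan is to combine the two conditions of tameness with Lemma \ref{fin}. First, tameness condition (2) applied to $F$ yields an $\cL$-definable $g_2:X\to k^{s_2}$ such that the restriction of $F$ to each fiber of $g_2$ is s-continuous. On such a fiber, s-continuity provides, on every open ball $B$ contained in the fiber, the following dichotomy: $F|_B$ is either constant, or it is bijective onto an open ball and obeys the strict distance identity $|F(x)-F(y)|=\gamma_B|x-y|$ for all $x,y\in B$, and is in particular injective on $B$.

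Next, I would apply tameness condition (1) to the graph $T:=\{(x,\eta_2)\in K\times k^{s_2}:x\in X,\ g_2(x)=\eta_2\}$, which produces a partition function $g_1:K\to k^{s_1}$ and a center $c_1:k^{s_1}\to K$ (whose range contains no open ball) so that, after refining to $g:=(g_1|_X,g_2)$, every nonempty fiber $X_\eta:=g^{-1}(\eta)$ is in $c_1(\eta_1)$-config. In the graph-config case $X_\eta$ is a subset of the singleton $\{c_1(\eta_1)\}$ and $F|_{X_\eta}$ is trivially constant. In the open-config case $X_\eta$ is a disjoint union of open balls $B_\gamma=c_1(\eta_1)+A_{\xi,\gamma}$ indexed by $\gamma$ in a definable set $G_\eta\subset\Gamma^\times$, and s-continuity of $F$ acts on each such ball individually. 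Refining $g$ further by a definable $\chi:X\to\{0,1\}$ that records, for each $x$, which side of the dichotomy the ball $B_\gamma$ containing $x$ lies on, reduces us to two uniform cases: either $F$ is constant on every $B_\gamma$, or $F$ is injective on every $B_\gamma$.

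To upgrade this from local (per ball) to global (across the union) behavior, I would apply tameness (1) a second time to the image side, producing a center $c_F:k^{s_F}\to K$ and partition function $g_F:K\to k^{s_F}$ controlling $F(X)\subset K$. Refining $g$ by $g_F\circ F$ places $F(X_\eta)$ in a single $c_F(\eta_F)$-config. In the constant-on-each-ball case, we get a definable map $\gamma\mapsto y_\gamma$ (where $F(B_\gamma)=\{y_\gamma\}$) from $G_\eta$ into that $c_F(\eta_F)$-config; since its image contains no open ball by Lemma \ref{fin}, a finite number of further $\ac$-valued definable refinements separate out subpieces on which either all $y_\gamma$ coincide ($F$ is globally constant) or each ball $B_\gamma$ is a singleton with the $y_\gamma$ pairwise distinct ($F$ is globally injective). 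The injective-on-each-ball case is treated symmetrically: each $F(B_\gamma)$ is an open ball, the same image-side refinement places distinct $B_\gamma$'s into distinct config balls of $F(X_\eta)$, and one concludes global injectivity on the resulting piece.

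The main obstacle is the passage from the local behavior of $F$ on a single config ball to its global behavior across the union: s-continuity only controls $F$ inside an individual open ball, and a priori two different balls $B_\gamma$ may have overlapping images. The crux is to use the tame configs on both the domain and the image, together with Lemma \ref{fin}, to rule out such overlaps through finitely many $k$-valued definable refinements of the partition.
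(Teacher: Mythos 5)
The overall scaffolding (partition by tameness condition (2) to get s-continuity, then by tameness (1) to get a config on the domain, then split by the constant/injective dichotomy on config balls) matches the paper's opening reduction to the case where $F$ is s-continuous and locally injective or locally constant. But the crux of the proposition — passing from ball-by-ball behavior to global behavior on a definable piece — is not carried out correctly, especially in the locally injective case. You claim that the image-side refinement by $g_F\circ F$ ``places distinct $B_\gamma$'s into distinct config balls of $F(X_\eta)$'', but this does not follow: after refinement, $F(X_\eta)$ is contained in a set in $c_F(\eta_F)$-config, which is a disjoint union of balls of \emph{varying} radii, and nothing prevents two distinct $B'_\gamma:=F(B_\gamma)$ from landing in the same config ball (or from being nested). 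In that case $F$ is not injective on $B_\gamma\cup B_{\gamma'}$. Moreover, $g_F\circ F$ need not even be constant on $B_\gamma$, so the refinement can split the balls $B_\gamma$ themselves, destroying the structure you are relying on. The paper handles this case with a different and essential idea: it applies tameness to the \emph{graph} $G(F)\subset K^2$ with parameters $t$ running over $F(X)$. Local injectivity forces each fiber $F^{-1}(t)$ to contain no open ball, so the config of $g_0^{-1}(\eta)_t$ must be a graph config, i.e.\ a singleton $\{c(\eta,t)\}$; the center function $t\mapsto c(\eta,t)$ is then a definable left inverse of $F|_{g^{-1}(\eta)}$, giving injectivity directly. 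This device of applying tameness over the range $F(X)$ rather than over $\Gamma$ is what your argument is missing.

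The constant case is closer to being right, but the stated conclusion is incoherent: you say the further refinements separate out subpieces where ``each ball $B_\gamma$ is a singleton with the $y_\gamma$ pairwise distinct ($F$ is globally injective)''. In the open-config case the $B_\gamma$ are open balls, hence infinite, so this outcome cannot occur, and $F$ constant on each $B_\gamma$ with distinct $y_\gamma$ is neither constant nor injective on $X_\eta$. What you should instead conclude, following the observation you yourself make, is this: since $F$ on the constant pieces factors through a definable map into $k^{s}\times\Gamma^{b}$, Lemma~\ref{fin} gives that $F(X)$ contains no open ball, so every set $g_F^{-1}(\eta_F)\cap F(X)$ is forced to be in graph $c_F(\eta_F)$-config, i.e.\ a singleton; hence refining by $g_F\circ F$ already makes $F$ constant on each fiber, with no further ``$\ac$-valued refinements'' needed. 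This is exactly how the paper closes the locally constant case.
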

\begin{proof}
By tameness, logical compactness, and by going to a test pair, it suffices to treat the case where $F$ is s-continuous and moreover locally injective or locally constant.
Indeed, by tameness there exists an $\cL$-definable function $g_{00}:X\to k^{s_0}$ such that $F$ is s-continuous on each fiber of $g_{00}$, and such that each fiber of $g_{00}$ is either a singleton or open in $K$. If we can prove the statement for each restriction of $F$ to $g_{00}^{-1}(\xi)$ for any $\xi\in k^{s_0}$, then we are done by logical compactness, since $(K,\cL)$ is arbitrary in the proposition and since such a restriction is $\cL(\xi)$-definable and lives thus in the test pair $(K,\cL(\xi))$, also a tame structure. Note that such logical compactness yields finitely many candidate definable functions $g_i$ out of infinitely many ones, but these $g_i$ can be combined to a single one by putting $g(x) := g_i(x)$ for the minimal $i$ such that $F$ is either injective or constant on $g_i^{-1}(\eta)$, which is an $\cL$-definable condition.

Let us first suppose that $F$ is locally injective. Let $G(F)\subset K^2$ be the graph of $F$.
By logical compactness\footnote{Logical compactness will be used like in this proof but without extra explanation to go from the one-variable setting in any model and with any constants added, to the family-version.} and tameness, there exist $s\geq 0$ and definable functions
$$
g_0:G(F)\to k^s\ \mbox{ and }\  c: k^s\times F(X)\to K
$$
such that for each value $(\eta,t)\in k^s\times F(X)$, the set $g^{-1}_0(\eta)_t\subset K$ is in $c(\eta,t)$-config.
Indeed, for each value $y$ in $F(X)$ and by tameness of the test pair $(K,\cL(y))$, there exist $\cL(y)$-definable maps $g_y$ from $G(F)\cap K\times \{y\}$ to $k^{s_y}$ and $c_y: k^{s_y}\to K$ such that
for each value $\eta\in k^{s_y}$, the set $g^{-1}_y(\eta)$, considered as a subset of $K$, is in $c_y(\eta)$-config. Logical compactness yields again finitely many $g_i$ and $c_i$ out of these possibly infinitely many $g_y$ and $c_y$, and these can again be combined by defining $g_0(x,t)$ as $g_i(x,t)$ and $c(\eta,t)$ as $c_i(\eta,t)$ for the minimal $i$ such that $g_i^{-1}(\eta)_t$ is in $c_i(\eta,t)$-config.
Set
$$
g: \begin{cases} X\to k^s \\ x\mapsto g_0(x,F(x)).
\end{cases}
$$
By the local injectivity of $F$ and by the definition of being in $c(\eta,t)$-config, it follows that $F^{-1}(t)$ is contained in the range of $c(\cdot,t)$. Fix $\eta\in g(X)$. Then $y\mapsto c(\eta,y)$ is the inverse function of the restriction of $F$ to $g^{-1}(\eta)$, which is thus injective.

Let us finally suppose that $F$ is locally constant.
By tameness, s-continuity, and local constancy of $F$, there exist $a,b\geq 0$ and
$$
h:X\to k^a\times \Gamma^b,
$$
such that $F$ is constant on each fiber of $h$. By Lemma \ref{fin}, $F(X)$ contains no open ball. By tameness there exist definable functions
$$
g_1: F(X)\to k^s\ \mbox{ and }\ c:k^s\to K
$$
such that $g_1^{-1}(\eta)\cap F(X)$ is in $c(\eta)$-config, meaning
 that $F(X)$ is contained in the range of $c$. Now define $g:X\to k^s$ as sending $x$ to the unique $\eta$ with $c(\eta) = F(x)$. It is clear that $F$ is constant on $g^{-1}(\eta)$ for any $\eta\in k^s$.
\end{proof}

By the following corollary, tameness appears as a special variant of $b$-minimality as defined in \cite{CLb}, but tameness has a more geometrical flavor.
Note that for us the sorts $k$ and $\Gamma$ play a rather different role, while in $b$-minimality all sorts other than $K$ are treated on the same footing. In particular parameters in $k$ play a special role for us, defining the splitting of a space into  parts parametrized by the residue field in Theorem \ref{Lip} and elsewhere.

\begin{cor}[$b$-minimality]\label{b}
Suppose that $(K,\cL)$ is tame. Let $\cT$ be the theory of the restriction of $(K,\cL)$ to the sorts $K,k,\Gamma$, that is, $\cT$ is the theory of the structure on these sorts having as definable sets the $\cL$-definable sets. 
Then $\cT$ is $b$-minimal, with main sort $K$ and where the role of balls is played by open balls.
\end{cor}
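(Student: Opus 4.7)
The strategy is to verify, in every model of $\cT$, the three axioms of $b$-minimality from \cite{CLb}, with the word ``ball'' interpreted throughout as ``open ball of $K$''. Concretely these axioms are: \textbf{(b1)} every parametrically $\cT$-definable subset $X\subseteq K$ admits a definable partition (via a definable map to a product of auxiliary sorts) whose fibres are singletons or open balls; \textbf{(b2)} every definable function $X\subseteq K\to K$ is, piecewise over parameters in the auxiliary sorts, injective or constant; \textbf{(b3)} no definable map from a product of auxiliary sorts to $K$ has image containing an open ball. Since tameness of $(K,\cL)$ is defined so that every test pair $(M,L)$ is weakly tame, each of the three ingredients below is available with arbitrary parameters in an arbitrary elementarily equivalent structure; a standard compactness argument then reduces the axioms above, in every model of $\cT$, to their parameter-free instances in a weakly tame $(K,\cL)$. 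So from now on I would assume $(K,\cL)$ itself is weakly tame and prove the parameter-free forms.

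For \textbf{(b1)}, apply Definition~\ref{Ktame}(1) to the $\cL$-definable set $X$, viewed inside $K\times k^0\times \Gamma^0$. This yields $\cL$-definable maps $g_0:K\to k^s$ and $c:k^s\to K$ (the latter with image containing no open ball) such that, for each $\eta\in k^s$, one has
$$
X\cap g_0^{-1}(\eta)=\bigcup_{\gamma\in G_\eta}\bigl(c(\eta)+A_{\xi(\eta),\gamma}\bigr)
$$
for some $\xi(\eta)\in k$ and some definable $G_\eta\subseteq\Gamma$ (the latter coming from the uniqueness, hence definability, of the family $(U_\gamma)$ in the nonempty case). Refine $g_0$ by the $\cL$-definable map $x\mapsto\bigl(g_0(x),\,|x-c(g_0(x))|\bigr)\in k^s\times\Gamma$. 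Each fibre of the refined map equals exactly one set $c(\eta)+A_{\xi(\eta),\gamma}$, which by the axioms of $\ac$ is either an open ball of radius $\gamma$ (if $\xi(\eta)\neq 0$) or the singleton $\{c(\eta)\}$ (if $\xi(\eta)=0$, forcing $\gamma=0$). This is the required partition. Axiom \textbf{(b2)} is exactly the content of Proposition~\ref{inj-cons}, which already packages piecewise injectivity-or-constancy by a definable map to the residue-field sort. Axiom \textbf{(b3)} is exactly Lemma~\ref{fin}, which asserts that any $\cL$-definable $h:k^a\times\Gamma^b\to K$ has image with no open ball.

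The only genuinely delicate point, and the only place where the full force of tameness (rather than weak tameness of $(K,\cL)$) is used, is the passage from the $\emptyset$-definable statements above to the axioms of \cite{CLb} \emph{with parameters} in every model of $\cT$. This is handled cleanly by the test-pair formulation of Definition~\ref{Ktame}: parameters from any elementary extension can be added to $\cL$ as new constants in a test pair $(M,L)$, which remains weakly tame, so Definition~\ref{Ktame}(1), Proposition~\ref{inj-cons} and Lemma~\ref{fin} all remain valid with those parameters, and compactness yields a uniform definable partition or uniform piecewise decomposition as required.
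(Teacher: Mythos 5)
Your proof is correct and follows the same route the paper takes (the paper's own proof is a one-line reference to Definition~2.1 of \cite{CLb}, Lemma~\ref{fin}, and Proposition~\ref{inj-cons}); you have simply unpacked why each axiom follows. In particular your verification of (b1) — applying Definition~\ref{Ktame}(1) and then refining $g_0$ by $x\mapsto(g_0(x),\,|x-c(g_0(x))|)$ so that each fibre is a single set $c(\eta)+A_{\xi(\eta),\gamma}$, hence an open ball or a singleton — is exactly the implicit content, and your use of Proposition~\ref{inj-cons} for the injective-or-constant axiom and Lemma~\ref{fin} for the no-ball-in-image axiom matches the paper's citations. Your closing remark on passing via test pairs and compactness to arbitrary parameters in arbitrary models is the right way to address the "all models of $\cT$" quantifier that the paper leaves implicit.
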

\begin{proof}
The proof is immediate from Definition 2.1 of \cite{CLb}, Lemma \ref{fin}, and Proposition \ref{inj-cons}.
\end{proof}

In particular, the dimension theory for $b$-minimal structures of \cite{CLb} applies to tame structures. 

\begin{prop}[Continuity]\label{continuity}
Let $f:X\subset K^n\to K$ be $\cL$-definable. Then there exists an $\cL$-definable function
$$
g:X\to k^s
$$
for some $s\geq 0$ such that, for each $\eta\in k^s$, the restriction of $f$ to $g^{-1}(\eta)$ is continuous. Further, for given $T\subset X\times k^a\times \Gamma^b$ with $a\geq 0$, $b\geq 0$, one can moreover take $g$ such that the fibers $T_x$ are locally independent of $x\in g^{-1}(\eta)$ for each $\eta\in k^s$. Moreover, if $n=1$, one can ensure that, for any ball $B$ contained in $X$, $T_x=T_{x'}$ for any $x,x'$ which lie in $B$. Finally, if $X$ is open and $f$ is locally $1$-Lipschitz in each variable separately, then one can moreover take $g$ such that the restriction of $f$ to $g^{-1}(\eta)$ is locally $1$-Lipschitz for any $\eta\in k^s$.
\end{prop}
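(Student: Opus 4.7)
The plan is to induct on $n$, with the base case $n=1$ combining the two tameness conditions directly, and the inductive step upgrading separate continuity to joint continuity via the ultrametric inequality.

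For $n=1$, I would first apply tameness condition~(1) to $X$ (viewed inside $K\times k^{0}\times\Gamma^{0}$) to obtain an $\cL$-definable $g_{1}:X\to k^{s_{1}}$ whose fibers are either a singleton or a disjoint union of open balls; then apply tameness~(2) to get $g_{2}:X\to k^{s_{2}}$ making $f$ s-continuous on each fiber of $g_{2}$. Setting $g=(g_{1},g_{2})$, continuity of $f$ on each fiber follows at once from s-continuity, since on any open ball $B$ contained in a fiber one has $|f(x)-f(y)|=\gamma(B)|x-y|$. For the $T$-statement in the $n=1$ case, I would apply tameness~(1) to $T\subset K\times k^{a}\times\Gamma^{b}$ to obtain an $\cL$-definable $g_{T}:K\to k^{s_{T}}$ and a center $c$ such that each $T\cap(g_{T}^{-1}(\eta)\times k^{a}\times\Gamma^{b})$ is in $c(\eta)$-config, hence $T_{x}$ equals the corresponding $U_{\gamma,\eta}$ on every open ball $c(\eta)+A_{\xi,\gamma}$ appearing in the decomposition; merging $g_{T}$ into $g$ yields the announced ball-wise constancy.

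For the inductive step, I would apply the $n=1$ case family-wise in each coordinate direction, via logical compactness over test pairs $(K,\cL(\mathbf{a}))$, together with the inductive hypothesis applied to $(n-1)$-variable sections, to produce a single $\cL$-definable $g$ on each fiber of which $f$ is continuous in each variable separately and each point has a product-of-open-balls neighborhood within the fiber. The telescoping ultrametric estimate
\begin{equation*}
|f(x)-f(y)| \leq \max_{i} \bigl| f(y_{1},\ldots,y_{i-1},x_{i},x_{i+1},\ldots,x_{n}) - f(y_{1},\ldots,y_{i-1},y_{i},x_{i+1},\ldots,x_{n}) \bigr|
\end{equation*}
then upgrades separate to joint continuity on each such box, and the same estimate shows that separate local $1$-Lipschitzness implies joint local $1$-Lipschitzness on a box, settling the Lipschitz clause as well. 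Local $T$-constancy for $n\geq 2$ follows from the $n=1$ case applied one coordinate at a time, with the remaining coordinates absorbed as parameters in a test pair.

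The main obstacle is the inductive step: careful bookkeeping is needed to ensure that a single $\cL$-definable $g$ simultaneously witnesses continuity, local $T$-fiber constancy, local $1$-Lipschitzness, and the local box structure (openness of fibers in each coordinate direction) that makes the telescoping argument go through. All the individual partitions arising in the argument can be merged into one $g$ by the standard combining trick already used in the proof of Proposition~\ref{inj-cons}, namely defining $g(x)=g_{i}(x)$ for the minimal index $i$ such that a chosen $\cL$-definable condition holds.
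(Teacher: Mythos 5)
Your proposal is correct and follows essentially the same route as the paper's proof: tameness conditions~(1) and~(2) together with the merging of definable maps handle $n=1$, and the general case is then obtained by reducing (via compactness, test pairs, and the inductive hypothesis) to separate s-continuity together with a locally stable open-configuration structure for the fibers, after which the ultrametric telescoping inequality upgrades separate to joint continuity (and likewise for the $1$-Lipschitz and $T$-constancy clauses). The paper spells out the $n=2$ case in detail and phrases the needed stability as \emph{continuity of the center, local independence of the $U_{\gamma}$, and openness of the configurations} rather than as an explicit product-of-open-balls neighborhood, but that is a cosmetic difference --- the bookkeeping you flag as the main obstacle is precisely what those hypotheses accomplish.
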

\begin{proof}
The statements for $n=1$ follow from tameness. Indeed, the statement about continuity follows from s-continuity when $n=1$ and the statement about $T$ follows from
existence of tame configurations. Note that one can preserve continuity of $f$ while controlling $T_x$ since  it is  possible to combine two maps $g_i:X\to k^{s_i}$ into a single map $g=(g_1,g_2)$ refining both $g_1$ and $g_2$, in the sense that each fiber of $g$ is included in a fiber of $g_1$ and of $g_2$.
The statements for general $n$ follow from tameness, induction on $n$, and logical compactness (used in the same way as in the proof of Proposition \ref{inj-cons}).

We shall illustrate this by giving now full  details for the proof of the statement about the continuity of $f$ in the case $n=2$. The proof for $n > 2$ and
and the  proof of the statement about $T_x$ are completely similar.

By compactness, by the case $n=1$, and by going to a test pair, we may suppose that $f(x_1,\cdot)$ and $f(\cdot,x_2)$ are continuous for each $x_1$ and $x_2$ in $K$. Likewise, we may suppose that $X_{x_1}$ is in $c(x_1)$-config, for some definable function $c$ and for each $x_1$ in $K$, and that $p_1(X)$ is in $d$-config for some $d\in K$, where $p_1:K^2\to K$ is the projection $(x_1,x_2)\mapsto x_1$. Similarly, we may further suppose that $c$ is continuous on $p_1(X)$. Again for similar reasons, we may assume that the definable family of sets $U_{x_1,\gamma}$ does locally not depend on $x_1\in K$, where the  $U_{x_1,\gamma}$ are such that, for some $\xi\in k$ and for each $x_1$ in $K$,
$$
X_{x_1} = \cup_{\gamma\in \Gamma} (c(x_1) + A_{\xi,\gamma})\times U_{x_1,\gamma}.
$$
Furthermore, we can also assume that,  for each $x_1$,
\begin{equation}\label{Xcopen}
\mbox{$X_{x_1}$ is in open $c(x_1)$-config and $p_1(X)$ is in open $d$-config}.
 \end{equation}
Indeed, after partitioning, the only case left is when $X_{x_1}$ or $p_1(X)$ are in graph config for each $x_1$, which, by continuity of $c$, reduces to the case $n=1$.


For any function $h:A\subset X\to K$ and any $a\in A$, let $\Delta(h,a)$ be the set of pairs $(\delta,\varepsilon)$ in $(\Gamma^\times)^2$ such that,
for any $b$ in $A$, if  $|b-a| < \delta$, then  $|h(b) - h(a)| < \varepsilon$.

For each $x_1,x_2$ in $K$, consider the sets
\begin{equation}\label{x12c}
\Delta(f(x_1,\cdot),x_2) \mbox{ and }  \Delta(f(\cdot,x_2),x_1). 
\end{equation}
They form definable families of subsets of $(\Gamma^\times)^2$ with parameters $(x_1,x_2)$ in $K^2$.

Now choose $(x_1,x_2)$ in $X$ and  $\varepsilon$ in $\Gamma^\times$. Choose also $\delta\in \Gamma^\times$ such that $(\delta,\varepsilon)$ lies in the intersection of the two sets  in (\ref{x12c}). Such a $\delta$ exists since $f(x_1,\cdot)$ and $f(\cdot,x_2)$ are assumed to be continuous.
By continuity of $c$,  local independence of $x_1$ for the sets $U_{x_1,\gamma}$, and by the openness assumption in (\ref{Xcopen}), we can take $\delta$ so small so that, for any $(v,w)$ in $X$ with $| (x_1,x_2) - (v,w) | < \delta$, one has that $(x_1,w)$ lies in $X$.

Now, for any $(v,w)$ in $X$ with $| (x_1,x_2) - (v,w) | < \delta$, we have
\begin{eqnarray*}
|f(x_1,x_2) - f(v,w)| & = & |f(x_1,x_2) - f(x_1,w) + f(x_1,w) - f(v,w)|  \\
 & \leq & \max \big( |f(x_1,x_2) - f(x_1,w)| , \ | f(x_1,w) - f(v,w)| \big)\\
 &\leq & \varepsilon,
\end{eqnarray*}
which yields the continuity of $f$.
\end{proof}

\begin{lem}[Inverses]\label{invers}
Let $c:X\subset K\to K$ be $\cL$-definable. Then there exists an $\cL$-definable function
$$
g:X\to k^s
$$
for some $s\geq 0$ such that, for each $\eta\in k^s$, either $c$ is locally $1$-Lipschitz on $g^{-1}(\eta)$, or, the restriction of $c$ to $g^{-1}(\eta)$ is injective and its inverse function is locally $1$-Lipschitz.
\end{lem}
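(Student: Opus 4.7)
The plan is to use the tameness assumptions to reduce to the s-continuous case and then split according to whether the local scaling factor furnished by s-continuity is $\le 1$ or $>1$. First, by condition (2) of Definition~\ref{Ktame} applied to $c$, there exists an $\cL$-definable $g_1:X\to k^{s_1}$ on whose fibers $c$ is s-continuous. Applying Proposition~\ref{inj-cons} fiberwise and combining the resulting $k^\ast$-valued functions as in the proof of that proposition, I would refine $g_1$ to an $\cL$-definable $g_2:X\to k^{s_2}$ such that on every fiber $F=g_2^{-1}(\eta)$ the restriction $c|_F$ is both s-continuous and either injective or constant. On the constant fibers there is nothing to do, since $c|_F$ is then $0$-Lipschitz.

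On an injective fiber $F$, s-continuity assigns to each $x\in F$ a well-defined element $\lambda(x):=|\partial c/\partial x|(x)\in\Gamma$ characterized by $|c(y)-c(y')|=\lambda(x)\,|y-y'|$ for all $y,y'$ in any sufficiently small open ball $B\ni x$ contained in $F$; the definition is consistent because any two such balls containing $x$ share a common subball containing $x$. The map $\lambda$ is $\cL$-definable, so I would split $F$ into the $\cL$-definable parts $F^{\le 1}=\{x\in F:\lambda(x)\le 1\}$ and $F^{>1}=\{x\in F:\lambda(x)>1\}$. On $F^{\le 1}$ the defining equation of s-continuity immediately gives that $c$ is locally $1$-Lipschitz. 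On $F^{>1}$, for each $x$ pick an open ball $B\ni x$ in $F$ witnessing s-continuity with constant $\lambda(x)>1$; since $\lambda(x)>0$ the description in Definition~\ref{defjacprop} forces $c(B)$ to be an open ball (it cannot be a singleton), hence an open neighborhood of $c(x)$ in $c(F^{>1})$, and on it the inverse satisfies $|c^{-1}(u)-c^{-1}(v)|=\lambda(x)^{-1}|u-v|\le |u-v|$ since $\lambda(x)^{-1}<1$. Thus $c^{-1}$ is locally $1$-Lipschitz on $c(F^{>1})$, and $c$ is injective there by construction.

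It remains to package the three-way $\cL$-definable partition (constant fibers; $F^{\le 1}$-parts; $F^{>1}$-parts) as a single $\cL$-definable $g:X\to k^s$ refining $g_2$. This is the step I expect to be the main obstacle, because $\lambda$ is $\Gamma$-valued while $g$ must be $k^s$-valued. I would handle it exactly as in the combination argument at the start of the proof of Proposition~\ref{inj-cons}: apply tameness condition (1) of Definition~\ref{Ktame} to the indicator sets of the three $\cL$-definable regions to obtain $k^\ast$-valued refinements, and stitch these refinements into a single $k^s$-valued function by coordinate concatenation and padding with the distinguished element $0\in k$. The output is then an $\cL$-definable $g:X\to k^s$ with the required dichotomy on each fiber.
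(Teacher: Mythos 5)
Your strategy matches the paper's terse proof, which simply invokes Proposition~\ref{inj-cons} together with the definitions of tameness and s-continuity. Your first steps are sound: tameness (2) gives s-continuity on fibers, Proposition~\ref{inj-cons} gives injective-or-constant, the scaling constant $\lambda$ is locally constant and well-defined, the constant case is trivially $1$-Lipschitz, and on $F^{>1}$ the inverse is locally $1$-Lipschitz because the s-continuity identity $|c^{-1}(u)-c^{-1}(v)|=\lambda(x)^{-1}|u-v|$ has $\lambda(x)^{-1}<1$. You are also right to identify the packaging step as the crux.

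However, your proposed resolution of that step does not work as written. Applying tameness (1) to the set $T:=F^{>1}$ yields $g_T,c_T$ such that $T\cap g_T^{-1}(\eta)$ is in $c_T(\eta)$-config; it says nothing about $(X\setminus T)\cap g_T^{-1}(\eta)$. Likewise applying it to $X\setminus T$ and concatenating the resulting maps does not force the fibers of the combined $g$ to lie entirely inside $T$ or entirely inside $X\setminus T$: a subset of a set in $c$-config need not be in $c'$-config, and nothing in Definition~\ref{config} or \ref{Ktame}(1) separates a set from its complement. The ``combination argument'' you cite from Proposition~\ref{inj-cons} solves a different problem (selecting, for each $\eta$, one of finitely many candidate $g_i$ that already works on $g_i^{-1}(\eta)$); it does not split a fiber according to a $\Gamma$-valued dichotomy. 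Concretely, after your construction a fiber $g^{-1}(\eta)$ could still contain balls with $\lambda\le 1$ and balls with $\lambda>1$, on which neither alternative of the lemma holds. So there is a genuine gap in the last step: you need an argument (e.g.\ going to a test pair, fixing the fiber, and exploiting the config structure of both the domain and the image $c(X)$ simultaneously) showing that the $\lambda\le 1$ versus $\lambda>1$ split can be realized by a $k^s$-valued refinement, or alternatively settle for the weaker statement allowing an additional finite $\cL$-definable partition, which is in fact all that Corollary~\ref{cor:invers} and its applications use.
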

\begin{proof}
The statement is clear,  by Proposition \ref{inj-cons} and the definitions of tameness and s-continuity.
\end{proof}

Combining Lemma \ref{invers} with a form of the chain rule for differentiation, we find the following several-variable result.
\begin{cor}\label{cor:invers}
Let $X\subset K^n$ be $\cL$-definable and of dimension $d<n$. Then there exist an $\cL$-definable function
$$
g:X\to k^s,
$$
for some integer  $s$, a finite partition of the graph $G(g)$ of $g$ into $\cL$-definable parts $A_i$,
and for each $i$ an injective coordinate projection $p_i:A_i\to K^d\times k^s$ and $\cL$-definable functions
$$
h_i: p_i(A_i)\to K^{n-d}
$$
such that the union over $i$ of the graphs $G(h_{i})$ equals $G(g)$ and such that the functions $h_{i,\eta}$ are locally $1$-Lipschitz for each $\eta\in k^s$.
\end{cor}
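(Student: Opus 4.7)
The plan is to combine the dimension theory coming from $b$-minimality (Corollary \ref{b}) with the single-variable Lipschitz dichotomy of Lemma \ref{invers}, promoted to the several-variable setting via the final clause of Proposition \ref{continuity}.

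First, since $X$ has dimension $d$ and $(K,\cL)$ is $b$-minimal, standard dimension theory provides, for every point of $X$, a size-$d$ subset $I \subset \{1,\ldots,n\}$ of coordinates such that the projection $\pi_I : K^n \to K^d$ is locally injective on $X$ near that point. By logical compactness (passing to test pairs as in the proof of Proposition \ref{inj-cons}) and an $\cL$-definable partition, I can decompose $X$ into finitely many pieces, each of which is, for some fixed $I$, the graph of an $\cL$-definable function $h : \pi_I(X_{\mathrm{piece}}) \to K^{n-d}$.

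Second, fix such a piece with graph function $h = (h_\alpha)_{\alpha \notin I}$. For each $\alpha \notin I$ and each $i \in I$, consider the one-variable slice $x_i \mapsto h_\alpha(\ldots,x_i,\ldots)$ obtained by fixing the remaining input variables. Lemma \ref{invers}, together with a compactness argument in the frozen variables, yields (after an $\cL$-definable refinement by a $k^s$-parameter) that on each fiber either this slice is locally $1$-Lipschitz, or its inverse is. In the latter case I swap coordinates: replace the index $i \in I$ by $\alpha$, so the piece is now described as the graph over $\pi_{I'}$, with $I' = (I\setminus\{i\})\cup\{\alpha\}$. The chain-rule identity for s-continuous functions — the s-slope of an inverse is the reciprocal — guarantees that the swap strictly improves the offending slope. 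Since only finitely many subsets $I$ are available, iterating this swap terminates; equivalently, I may choose ab initio the projection $\pi_I$ minimizing the pointwise maximum of s-slopes of the resulting graph map, and minimality rules out any further swap. At this point the remaining graph map $h_i$ is locally $1$-Lipschitz in each input variable separately, on each fiber of the current $k^s$-partition.

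Third, I invoke the final clause of Proposition \ref{continuity} component by component on $h_i$: this produces a further $k^s$-refinement on the fibers of which each component of $h_i$ is locally $1$-Lipschitz jointly, and hence so is $h_i$ itself when $K^{n-d}$ is equipped with the max norm. Collecting all pieces, merging the various $k^s$-valued maps into a single $\cL$-definable $g : X \to k^s$, and reading off the $A_i \subset G(g)$, the injective coordinate projections $p_i$, and the maps $h_i$, yields the statement. The main obstacle is the coordinate-swapping step: one must verify that the minimizing choice of $I$ can be made $\cL$-definably and uniformly in the point of $X$, and that at such a minimum no further swap is forced. Both issues reduce to careful bookkeeping of the s-slopes via the chain rule, combined with the finiteness of the index set of size-$d$ subsets of $\{1,\ldots,n\}$ handled by logical compactness.
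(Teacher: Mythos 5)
Your proposal follows essentially the same route as the paper: reduce to graph form, then use the dichotomy of Lemma \ref{invers} together with the chain rule for s-slopes to find a coordinate projection on which the resulting graph map has all partial s-slopes $\leq 1$. The paper's version is more streamlined: it works with a fixed set of input coordinates, reorders so that $|\partial h/\partial x_d|$ is \emph{maximal}, and then performs a single swap (the one maximal coordinate against one output coordinate); the chain rule then immediately gives that all new slopes are $\leq 1$, so no iteration is needed. Your ``ab initio minimize the max slope over all $\binom{n}{d}$ projections'' formulation is equivalent and correct, and the required definability/uniformity is indeed handled by piecewise decomposition and compactness exactly as you say.

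The iteration-based version of your argument, however, has a genuine gap as stated: you claim each swap ``strictly improves the offending slope,'' but swapping a coordinate with a non-maximal bad slope can make \emph{other} slopes grow, and ``finitely many $I$'' alone does not rule out cycling without a monotone quantity. The clean fix is precisely the paper's move (swap only the maximal slope, in which case one swap suffices) or your alternative minimizing formulation; you should commit to one of these rather than the naive iteration. Also, your appeal to the final clause of Proposition \ref{continuity} to pass from separate to joint local $1$-Lipschitz continuity is a point the paper's proof leaves implicit — but that clause requires the domain to be open, which the paper secures via an explicit induction on $d$ (``by induction on $d$ we may suppose that $U$ is open''); your write-up should include that reduction. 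These are details rather than conceptual obstacles, and the overall strategy matches the paper.
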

\begin{proof}
By compactness,  working piecewise, and going to a test pair, we may assume that $X$ is already the graph of a function
$$
h:U\subset K^d\to K^{n-d},
$$
which is s-continuous in each variable separately.
By induction on $d$ we may suppose that $U$ is open. We will treat the case  $d=n-1$, the general case being similar. After reordering the variables $x_1,\ldots,x_d$, working piecewise, we may suppose that $|\partial h/\partial x_{d}|$ is maximal among the $|\partial h/\partial x_{i}| $ on the whole of $U$ for $i=1,\ldots,d$, and that $|\partial h/\partial x_{d}|>1$ on $U$. Moreover, by Proposition \ref{inj-cons} and compactness, we may suppose for any $a=(x_1,\ldots,x_{d-1})$ that $h(a,\cdot)$ is injective on $U_{a}$. Now we can reverse the role of $x_n=x_{d+1}$ and $x_d$, by reordering the coordinates. This way $h$ is replaced by a function $\hat h$ sending $(x_1,\ldots,x_{d-1},t)$ to the compositional inverse
$$
h(x_1,\ldots,x_{d-1},\cdot)^{-1}(t).
$$
By s-continuity , with the notation from just below Definition \ref{defjacprop}, we have, for each $i=1,\ldots,d-1$ and for $x,t$ with $h(x)=(x_1,\ldots,x_{d-1},t)$ that
$$
\Bigl |\frac{\partial \hat h(x_1,\ldots,x_{d-1},t)}{\partial x_i}\Bigr| =  \Bigl | \frac{\partial
h(x)}{\partial x_i} \Bigr| \cdot \Bigl |\frac{\partial h(x)}{\partial
x_d}\Bigr|^{-1},
$$
which is at most one by our assumption that $|\partial h/\partial x_{d}|$ is maximal among the $|\partial h/\partial x_{i}|$.
\end{proof}

Now we come to the proof of our main results on Lipschitz continuity.

\begin{remark}\label{rem:comp}
As the proof of Theorem \ref{Lipcenter} for $n=1$ will show, the hypothesis that $f$ is locally $1$-Lipschitz is not needed at all when $n=1$. Hence, Theorem \ref{Lipcenter} for $n=1$ holds even when $f$ is not locally $1$-Lipschitz. Furthermore, when $n=1$,  if $f$ is injective and $g$ is as given by Theorem \ref{Lipcenter}, the function $f$ gives a correspondence between the maximal balls included in $g^{-1}(\eta)$ and the maximal balls included in $f(g^{-1}(\eta))$.
\end{remark}

%

\begin{proof}[Proof of Theorem \ref{Lipcenter} for $n=1$] %
First suppose that $f$ is injective and s-continuous, and that $X$ equals an open ball $c_0+A_{\xi,\gamma}$ for some $\cL$-definable $c_0\in K$, $\xi\in k$ and $\gamma\in \Gamma$. Write $Y$ for $f(X)$. It follows from the case assumptions that $Y$ is an open ball.
By tameness, there exist $s_0\geq 0$ and $\cL$-definable functions
$$
h:Y\to k^{s_0} \mbox{ and } d:k^{s_0}\to K
$$
such that $h^{-1}(\eta)$ is in $d(\eta)$-config for each $\eta\in k^{s_0}$. Define
$$
g: \begin{cases} X\to k^{s_0} \\ x\mapsto h(f(x)),\end{cases}
$$
and define $c(\eta)$ as $c_0$ when $d(\eta)$ lies outside $Y$ and as $f^{-1}(d(\eta))$ when $d(\eta)$ belongs to $Y$. It follows by s-continuity and injectivity of $f$ that $g^{-1}(\eta)$ is in $c(\eta)$-config. The slightly more general case where $f$ is injective and s-continuous and where $X$ is in $c_0$-config for some $\cL$-definable $c_0\in K$ is treated similarly, by choosing $h$ and defining $g$ and $c$ as in the above construction.
Finally we consider the general case.
By tameness and Proposition \ref{inj-cons}, there exist $\cL$-definable functions $g_0:X\to k^{s_0}$ and $c:k^{s_0}\to K$, such that
for each $\eta\in k^{s_0}$, the restriction of $f$ to $g_0^{-1}(\eta)$ is s-continuous, and, injective or constant, and such that the set $g_0^{-1}(\eta)$ is in $c(\eta)$-config. Now we finish the proof by noting that the above construction, applied to the restrictions of $f$ to $g_0^{-1}(\eta)$, works definably and uniformly in $\eta\in g_0(X)$.
\end{proof}

%
%
%

\begin{proof}[Proof of Theorem \ref{Lipcenter} for general $n$] %
We proceed by induction on $n$, assuming that Theorems \ref{Lip} and \ref{Lipcenter}  hold for integers up to $n-1$. The case $n=1$ of Theorem \ref{Lipcenter} is already proved so we may assume that $n>1$.

By Theorem \ref{Lipcenter} in the case $n=1$ that we just proved and then by logical compactness, it is enough to consider the case of an $\cL$-definable part $X\subset A$ such that, for some coordinate projection $p:K^n\to K^{n-1}$ and some $\cL$-definable functions $c:p(X)\to K$  and $d:p(X)\to K$, $X_{w}$ is in $c(w)$-config, and $f_w(X_w)$ is in $d(w)$-config for each $w\in p(X)$. Again by Theorem \ref{Lipcenter} in the case $n=1$ and logical compactness (we keep for simplicity the notation $X$ for the part of $A$ we have now to work on), we may assume that, for some coordinate projection $p_1:p(X)\to K^{n-2}$, there are $\cL$-definable functions $b,c',d':p_1(p(X))\to K$ such that $p(X)_v$ is in $b(v)$-config, $c_v(p(X)_v)$ is in $c'(v)$-config, and $d_v(p(X)_v)$ is in $d'(v)$-config, for each $v\in p_1(p(X))$.
By Corollary \ref{cor:invers} and by Theorems \ref{Lip} and \ref{Lipcenter}  for $n-1$, we may suppose that $X$ is open.
The reduction to the case where $c$ is $1$-Lipschitz continuous is done as in \cite{CCL}, using s-continuity instead of the norm of the partial derivatives. Let us now describe the strategy of \cite{CCL} to make $c$ $1$-Lipschitz continuous, where we refer to \cite{CCL}, proof of Proposition 2.4, for the explicit ultrametric calculations. We shall proceed by decreasing
induction on the number of variables on which $c$ depends nontrivially, 
the case when is no such variable being clear. By compactness and tameness we may assume that $c$ is $s$-continuous in each variable separately. After reordering the variables $x_1,\ldots,x_{n-1}$, we may suppose that $|\partial c/\partial x_{n-1}|$ is maximal among the $|\partial c/\partial x_{i}| $ on the whole of $p(X)$ for $i=1,\ldots,n-1$. If $|\partial c/\partial x_{n-1}|\leq 1$ on the whole of $p(X)$, then we are done by Theorem \ref{Lip} for $n-1$. Hence, we may further assume that $1< |\partial c/\partial x_{n-1}|$ on the whole of $p(X)$. Now we subdivide in two cases (possibly involving a further finite partitioning), where for the complete details we refer to the two cases in the proof of Proposition 2.4 of \cite{CCL}: either also $X_w$ is in $c'(v)$-config for each $v$ and each $w$ with $p_1(w)=v$ in which case we are done by induction on the number of variables on which $c$ depends nontrivially, or, the graph of $c$ is included in $X$. In the latter case, one can finish by taking the inverse function of $c_v$ and by reversing the role of $x_n$ and $x_{n-1}$, using that $p(X)_v$ is in $b(v)$-config, and that $c_v$ is s-continuous and injective, as in \cite{CCL}. In this case one concludes similarly as in \cite{CCL}, using the chain rule as in the proof of Corollary \ref{cor:invers}. Thus we may suppose that $c$ is $1$-Lipschitz.

Let us now show that we can reduce further to the case where $d$ is $1$-Lipschitz as well, as required by the theorem. The argument is by decreasing induction on the number of variables on which $d$ depends.

Let us summarize the relevant current assumptions.
The set $X_{w}$ is in $c(w)$-config, $f_w(X_w)$ is in $d(w)$-config, $d_v(p(X)_v)$ is in $d'(v)$-config, and $f_w$ and $d_v$ are s-continuous for each $w\in p(X)$ and for each $v\in p_1(p(X))$. 
Furthermore, we may assume that $c$ is $1$-Lipschitz, $X$ is open and $f_w$ 
is injective for each $w\in p(X)$.
Moreover, by Theorem \ref{Lip} for $n-1$ and by compactness we may require that $f(\cdot,x_n)$ is $1$-Lipschitz for each $x_n$.
Define $Y$ as the image of $X$ under the function
$X\to K^n$ sending  $x$ to $(x_1,\ldots,x_{n-1},f(x))$. We may suppose that there are $\xi_1,\xi_2\in k$ such that, for each $w\in p(X)$, one has
$$
X_{w} = \{x_n\in K\mid \ac(x_n -c(w) )=\xi_1,\ |x_n -c(w) | \in G_1(w)\} \hbox{ and }
$$
$$
Y_{w}  = f_w(X_w)  =\{z\in K\mid \ac(z - d(w))=\xi_2,\ |z-d(w)| \in G_2(w)\}
$$
for some sets $G_i(w) \subset \Gamma$.
By Proposition \ref{continuity} for $n=1$ and by compactness, we may suppose that for each $v\in p_1(p(X))$ and each open ball $B$ contained in $p(X)_v$, these sets $G_i(v,t)$ do not depend on the choice of $t\in B$.
\\

\textbf{Case 1.} The function $w\mapsto f(w, z+c(w))$ is $1$-Lipschitz continuous for each $z$, where $z$ and $w$ are such that $(w, z +c(w))\in X$. \\

In this case, we may perform the bi-$1$-Lipschitz transformation $(w,z)\mapsto (w,z+c(w))$, and assume that $c$ is identically zero. This transformation preserves the assumptions summarized above. If $d$ is locally $1$-Lipschitz continuous in each variable separately, we are done by Proposition \ref{continuity} and by Theorem \ref{Lip} for $n-1$. Hence, we may suppose that $d$ is not locally $1$-Lipschitz in at least one variable. By working piecewise, we may suppose $d$ is nowhere locally $1$-Lipschitz in at least one specific variable.
Up to reordering the variables $x_1,\ldots,x_{n-1}$ if necessary, we may thus suppose for any $v$ that $d_v$ is nowhere locally $1$-Lipschitz.

Suppose that there is $v\in p_1(p(X))$ and an open ball $B$ contained in $p(X)_v$ such that
\begin{equation}\label{t1t2}
Y_{v,t_1}
 \not = Y_{v,t_2}
\end{equation}
for some $t_1,t_2\in B$.
Then this violates the $1$-Lipschitz continuity of $f$ in the variable $x_{n-1}$ as follows. Fix
$t_1,t_2\in B$ satisfying (\ref{t1t2}).
Choose $\gamma_0$ in $G_2(v,t_1)=G_2(v,t_2)$ such that the sets $A_1$ and $A_2$ are disjoint balls, with
$$
A_i := \{y\in K \mid \ac(y - d(v,t_i))=\xi_2,\ |y-d(v,t_i)|  = \gamma_0 \}.
$$
By Remark \ref{rem:comp} on the correspondence of maximal balls in domain and range of the functions $f_{v,t_i}$, we can take $\gamma$ in $G_1(v,t_1) = G_1(v,t_2)$,
and $x_{n}$ with
$$
\ac(x_{n} )=\xi_1,\ |x_{n} | =\gamma
$$
such that $f(v,t_i,x_n)$ lies in $A_i$ for $i=1,2$.
By  s-continuity of $d_v$ and the fact that $d_v$ is nowhere locally $1$-Lipschitz continuous, and the note below Definition \ref{defjacprop}, one has
$$
| d_v(t_1) - d_v(t_2)  | > |t_1-t_2|
$$
Since $A_1$ and $A_2$ are disjoint, it follows from their description that one has for any $y_i\in A_i$ for $i=1,2$ that
$$
|y_1-y_2| \geq |d(v,t_1)-d(v,t_2)|.
$$
Combining these inequalities with $y_i= f(v,t_i,x_n)$, one finds
$$
| f(v,t_1,x_n) - f(v,t_2,x_n)| \geq | d_v(t_1) - d_v(t_2)  | > |t_1-t_2|
$$
which indeed violates the $1$-Lipschitz continuity of $f$ in the variable $x_{n-1}$.
Hence, we may suppose that for each $v\in K^{n-2}$ and each open ball $B$ contained in $p(X)_v$, the set $Y_{v,t}$ is independent of the choice of $t\in B$. But then it follows that $Y_{v,t}$ is in $d'(v)$-config, as we wanted to prove.
\\

\textbf{Case 2.} The graph of $c$ is contained in $X$ and $f(w,c(w))=d(w)$. \\

In this case we may assume, by Theorem \ref{Lip} for $n-1$ and compactness, that $w\mapsto f(w,c(w))$ is $1$-Lipschitz as well. But then it follows from the case assumption, namely from $f(w,c(w))=d(w)$ and from a chain rule, see below Definition \ref{defjacprop}, that $d$ is $1$-Lipschitz and we are done also for this case.
\\

We now explain how one can deduce the general case from Case 1 and Case 2.
Let us write
$$
X^{(0)} = \{(w,z)\in p(X)\times K\mid (w,z + c(w) )\in X \},
$$
so that
$$
X^{(0)} = \{(w,z)\in p(X)\times K\mid \ac(z )=\xi_1,\ |z| \in G_1(w)\}
$$
and $f^{(0)}:X^{(0)}\to K$ for the function sending $(w,z)$ to $f(w,z + c(w) )$.

Take an $\cL$-definable function
$$
g:X^{(0)} \to k^s 
$$
for some $s$ such that, for each $\eta\in k^s$, each $(w,z)\in X^{(0)}$ the function $f^{(0)}(\cdot,z)$ is $1$-Lipschitz on $K^{n-1}\times \{z\} \cap g^{-1}(\eta)$ for each $z$.
In general  $g^{-1}(\eta)_w$ may not be in $0$-config. However, we may assume that  $g^{-1}(\eta)_w$ is in $\tilde c(\eta,w)$-config for some $\cL$-definable function $\tilde c$.
We may also assume that either the graph of $\tilde c$ is disjoint from $X^{(0)}$, or is included in $X^{(0)}$. In the first case, one notes that $g^{-1}(\eta)_w$ is in fact in $0$-config, and one falls in Case 1.
In the remaining case when the graph of $\tilde c$ is included in $X^{(0)}$, we may suppose that $d(w) = f(w,\tilde c(w))$ by the proof of Theorem \ref{Lipcenter} for $n=1$. If $\tilde c$ is $1$-Lipschitz then we fall in Case 2 and we are done.
If $\tilde c$ is not $1$-Lipschitz, after performing a permutation of  the variables as we did
in the beginning of this proof for $c$,  and transforming $d$ accordingly, we may assume $\tilde c$ is $1$-Lipschitz, and one falls again in a case already treated.
\end{proof}

\begin{remark}
We amend on \cite{CCL} and \cite{ch}, more precisely on their proofs of the piecewise Lipschitz continuity results.
The explanation of the reduction to the Cases 1 and 2 as in the proof of Theorem \ref{Lipcenter} is not given in the proof of Theorem 2.3 of  \cite{CCL}, and, only Case 1 is treated in \cite{CCL}, namely by assuming (*) on page 83 of \cite{CCL}.
Either one adds a Case 2 and a reduction to Cases 1 and 2, or, one uses the simplified approach of this paper. If one uses the approach of the present paper, one should adapt \cite{ch} accordingly, and use the monomial approximation result of \cite{ch} to get rid of the constant $|1/N|$ as created in the proof of Theorem \ref{Lipmixed} below for the analogues of (\ref{equi000}) and (\ref{equi00}).
\end{remark}

\begin{proof}[Proof of Theorem \ref{Lip}] 



We proceed by induction on $n$, assuming that Theorem \ref{Lipcenter} holds for integers up to $n$. For $n=0$ there is nothing to prove concerning
the statement of Theorem \ref{Lip}.
Write $p:X\to K^{n-1}$ for the coordinate projection sending $x:=(x_1,\ldots,x_n)$ to $\hat x := (x_1,\ldots,x_{n-1})$ and
define $Y$ as the image of $X$ under the function
$X\to K^n$ sending  $x$ to $(\hat x,f(x))$.

Clearly, by induction on the number of variables on which $f$ depends, Lemma \ref{invers}, Corollay \ref{cor:invers}, Theorem \ref{Lipcenter}, tameness, compactness and by going to a test pair, we may assume that the following basic assumptions hold.

\textbf{Basic Assumptions.} 
\begin{enumerate}
\item[(0)] $X$ is open in $K^n$.
\item[(1)] $f(\hat x,\cdot)$ is s-continuous for each $\hat x$ in $p(X)$.
\item[(2)] $f(\cdot,x_n)$ is $1$-Lipschitz continuous for each $x_n$.
\item[(3)] for each $\hat x$ in $p(X)$, the set $X_{\hat x}$ is in $c(\hat x)$-config, where $c$ is an $\cL$-definable function.
\item[(4)] for each $\hat x$ in $p(X)$, the set $Y_{\hat x}$ is in $d(\hat x)$-config, where $d$ is an $\cL$-definable function.
\item[(5)] $c$ and $d$ are $1$-Lipschitz on $p(X)$. 
\end{enumerate}

We prove that under these basic assumptions, $f$ is globally $1$-Lipschitz. By replacing $f$ by $f-d$ we may suppose that $d=0$. We may also assume that
\begin{equation}\label{equi0000}
|x_n-c(\hat x)|\leq |x_n| \mbox{ for each $x\in X$.}
\end{equation}
Indeed, one can replace $c$ by $0$ on the piece where one has $|x_n-c(\hat x)| > |x_n|$.

Consider $x,y\in X$. If $x_n$ and $y_n$ lie in the same open ball $B$ which is included in $X_{\hat x}$ with $\hat x = p(x)$, then one derives from the  assumptions:
\begin{eqnarray*}
 |f(x) - f(y)|
 & = & |f(x) - f(\hat x,y_n) + f(\hat x,y_n) - f(y)| \\
 & \leq & \max(|f(x) - f(\hat x,y_n)|,\  |f(\hat x,y_n) - f(y)| ) \\
 & \leq & \max(|x_n - y_n|,\  |\hat x - \hat y| ) \\
 & = & | x- y |,
   \end{eqnarray*}
which ends the proof in this case.

Now suppose that $x_n$ and $y_n$ do not lie in any open ball  included in $X_{\hat x}$ with $\hat x = p(x)$, and, by symmetry, that $x_n$ and $y_n$ do not lie in any open ball which is included in $X_{\hat y}$ with $\hat y = p(y)$. Note that this implies that
\begin{equation}\label{equi000}
 |x_n-c(\hat x)| \leq |x_n - y_n| \mbox{ and }  |y_n-c(\hat y)|   \leq |x_n - y_n| .
\end{equation}

We also have
\begin{equation}\label{equi00}
|f(x) | \leq   |x_n-c(\hat x)| , \mbox{ and }   |f(y) | \leq   |y_n - c (\hat y) |,
\end{equation}
by s-continuity as given by (1), since $f$ is locally $1$-Lipschitz and $d=0$.
Combining (\ref{equi0000}), (\ref{equi000}), (\ref{equi00}) one gets
$$
|f(x) - f(y)|  \leq \max (|x_n-c(\hat x)|, |y_n - c (\hat y) | ) \leq | x_n- y_n | \leq | x- y |,
$$
and we are done.
\end{proof}

\subsection{Lipschitz continuity in mixed characteristic tame geometry}\label{sec:mix}

Recall that the generalized Denef-Pas language $\LPas$ consists of the sorts $\mathrm{VF}$ for valued field, the $R_n$ for $n\geq 1$ for the residue rings modulo the product ideal of the ideal $(n)$ and the maximal ideal, and $\VG$ for the union of $\{0\}$ with the  multiplicatively written value group $\VG^\times$, and having as symbols the ring language on $\mathrm{VF}$, the ring language on the $R_n$, the language of ordered multiplicative groups $(\cdot,<)$ on $\VG^\times$, the norm map from $\mathrm{VF}$ to $\VG$, and angular component maps $\ac_n:\mathrm{VF}\to R_n$ for all $n\geq 1$. An angular component map $\ac_n:\mathrm{VF}\to R_n$ is just a multiplicative map $\ac_n:\mathrm{VF}^\times \to R_n^\times$, extended by zero on zero, that coincides with the natural projection on the units of the valuation ring to $R_n$. The maps $\ac_n$ are required to form a compatible system, that is, the composition of the projection $R_n\to R_m$ with $\ac_n$ has to equal $\ac_m$ whenever $m$ divides $n$.

Let $\cL_+$ be this generalized language of Denef-Pas $\LPas$, but without multiplication on $\VF$ and on the $R_n$.
Let $\cL$ be any first order language with the same sorts as $\cL_{+}$ and such that $\cL$ contains all the symbols of $\cL_{+}$.
Let $K$ be an $\cL_+$ structure, where we write $\Gamma$ for $\mathrm{VG}(K)$, $\Gamma^\times$ for $\VG^\times(K)$ and $K_n$ for $R_n(K)$.
Call a set $S$ auxiliary if it is a subset of a Cartesian product of some copies of $\Gamma$ and the $K_n$.

Let us use the notation
$$
A_{\xi,\gamma} := \{t\in K\mid \ac_n(t)=\xi,\ |t|=\gamma\},\ \mbox{ for
$\xi\in K_n$ and $\gamma\in \Gamma$.}
$$
Furthermore, let $(K_s)^s$ when $s=0$ be shorthand for $\{0\}$.

\begin{defn}[configurations]\label{config2}
Given a set $T\subset K\times S$ with $S$ auxiliary,
say that $T$ is in $c$-config with depth $n$ if there exists $\xi\in K_n$ such that
$T$ equals the union over $\gamma\in \Gamma$ of sets $(c + A_{\xi,\gamma})\times U_{\gamma}$ for some $U_{\gamma}\subset S$.
Again, if $T$ is nonempty and in $c$-config, then $n$, $\xi$ and the $U_\gamma$, where $\gamma$ is such that $\emptyset\not=A_{\xi,\gamma}$, are uniquely determined by $T$ and $c$.

Say that $T$ is in $\cL$-tame config if there exist $n\geq 0$, $s \geq 0$ and $\cL$-definable functions
$$
g:K\to K_s^s\ \mbox{ and }\
c:K_s^s\to K
$$
such that the range of $c$ contains no open ball, and such that
$T\cap (g^{-1}(\eta) \times S )$ is in $c(\eta)$-config with depth $n$ for each $\eta\in K_s^s$.
\end{defn}

One could as well have used $(K_s)^t$ instead of $K_s^s$ for some independent $s$ and $t$, but this would create heavier notation.

Suppose that the $\cL_+$ structure on $K$ can be expanded to be a structure for the generalized Denef-Pas language. This condition is a simplification that replaces some of the conditions of Section \ref{sec:5b}, which would otherwise have become more cumbersome in the mixed case. We maintain this condition throughout this section.
Suppose moreover that $K$ is an $\cL$-structure.

Consider an $\cL$-structure $M$ which is elementarily equivalent to $(K,\cL)$. 
If $L$ is a language which is obtained from $\cL$ by adding constants from $M$, then we call $(M,L)$ a test pair for $(K,\cL)$.

\begin{defn}[Mixed tameness]\label{mixKtame}
Say that $(K,\cL)$ is weakly mixed tame if the following hold:
\begin{enumerate}
\item[(1)] each $\cL$-definable set $T\subset K\times S$ with $S$ an auxiliary set is in $\cL$-tame config;
\item[(2)]  if $F:X\subset K\to K$ is $\cL$-definable, then there exists an $\cL$-definable function
$g:X\to K_s^s$ for some $s\geq 0$ such that, for each $\eta\in K_s^s$, the restriction of $F$ to $g^{-1}(\eta)$ is s-continuous.
\end{enumerate}


Say that $(K,\cL)$ is mixed tame if each test pair $(M,L)$ for $(K,\cL)$ is weakly tame.

More generally, call an $\cL$-theory $\cT$ mixed tame if for each model $\cM$ of $\cT$, the pair $(\cM,\cL)$ is mixed tame.
\end{defn}

By essentially the same proof as that of Theorem \ref{Lip}, we obtain our final result on Lipschitz continuity. 

\begin{thm}\label{Lipmixed}
Suppose that $(K,\cL)$ is mixed tame.
Let $f:X\subset K^n\to K$ be an $\cL$-definable function which is locally $1$-Lipschitz. Then there exists an integer $N>0$ and an $\cL$-definable function
$$
g:X\to K_s^s
$$
for some $s\geq 0$ such that for each $\eta\in K_s^s$, the restriction of $f$ to $g^{-1}(\eta)$ is $|1/N|$-Lipschitz.
\end{thm}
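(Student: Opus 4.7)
The plan is to follow, step for step, the proof of Theorem \ref{Lip}, keeping track of the extra constant $|1/N|$ introduced by the depth-$n$ angular components in the mixed characteristic setting. The preliminary statements from Lemma \ref{fin} through Corollary \ref{cor:invers} transcribe essentially word-for-word to the mixed tame setting: replace the parameter space $k^s$ by $K_s^s$, read each $\cL$-tame configuration as coming with a depth $n$ (Definition \ref{config2}), and invoke Definition \ref{mixKtame} in place of Definition \ref{Ktame}. None of these auxiliary statements pick up any constant, since their proofs rely only on the existence of $\cL$-tame configurations and on $s$-continuity, both of which are supplied directly by mixed tameness.

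Next, by joint induction on $n$ one establishes a mixed analogue of Theorem \ref{Lipcenter} in parallel with Theorem \ref{Lipmixed}, exactly as in the equicharacteristic setting but with each Lipschitz constant allowed to be $|1/M|$ for some integer $M$. The structural reductions (piecewise reduction to $|1/M|$-Lipschitz $c$ via inversion and the chain rule, then Case 1 and Case 2 for $d$) transcribe unchanged, and produce centers $c,d$ that are $|1/M|$-Lipschitz for some integer $M$ depending on the depths occurring in the finitely many pieces. In the inductive proof of Theorem \ref{Lipmixed} itself, one then sets up the Basic Assumptions (0)--(5) with this weaker Lipschitz constant for $c$ and $d$, reduces to $d = 0$ and $|x_n - c(\hat x)| \leq |x_n|$, and splits into the two cases. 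The ``common open ball'' case works identically through the two-step max-inequality.

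The remaining case is the heart of the matter, and is precisely where the constant $|1/N|$ enters, in the mixed analogues of \eqref{equi000} and \eqref{equi00}. In mixed characteristic, the sets $A_{\xi,\gamma}$ of Definition \ref{config2} at depth $n$ are open balls of radius $\gamma\cdot |n|$ rather than of radius $\gamma$; hence the assertion ``$x_n$ does not lie in any open ball included in $X_{\hat x}$'' only yields $|x_n - c(\hat x)| \leq |1/N| \cdot |x_n - y_n|$ for a suitable integer $N$ divisible by the depth, and similarly for the $\hat y$-side. Likewise, local $1$-Lipschitz continuity combined with $s$-continuity in the mixed setting produces $|f(x)| \leq |1/N| \cdot |x_n - c(\hat x)|$ rather than $|f(x)| \leq |x_n - c(\hat x)|$, since the inner ball underlying the $s$-continuity estimate is only controlled modulo the same depth. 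Combining these estimates with the analogue of \eqref{equi0000} yields $|f(x) - f(y)| \leq |1/N| \cdot |x - y|$, as required. The main obstacle is then purely bookkeeping: one must choose a single integer $N$ absorbing the depth-dependent constants of all the finitely many pieces produced by the inductive construction. Taking $N$ to be a common multiple of these depths (and of the intermediate constants $M$) works, and refining the parametrization $g\colon X \to K_s^s$ if necessary ensures that $N$ is uniform across $\eta\in K_s^s$.
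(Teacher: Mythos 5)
Your proposal is correct and follows essentially the same route as the paper: the paper's proof is exactly to transcribe Theorem~\ref{Lip} and its auxiliaries to the mixed setting (replacing $k$ by residue rings $K_s$ and $\ac$ by $\ac_s$) and to observe that the constant $|1/N|$ enters only through the mixed analogues of \eqref{equi000} and \eqref{equi00}, which is precisely what you identify, with the correct source (the depth-$n$ sets $A_{\xi,\gamma}$ being balls of radius $|n|\gamma$ rather than $\gamma$).
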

\begin{proof}
Adapt the proof of Theorem \ref{Lip}, and all its auxiliary results and their proofs, by replacing any occurrence of (the residue field) $k$ by a residue ring $K_s$ for some $s$, and each occurrence of $\ac$ by $\ac_s$ for some $s$. (See for example Proposition \ref{inj-cons-mixed} for the adaptation of Proposition \ref{inj-cons}.) In this process, a constant of the form $|1/N|$ shows up
in the upper bound in equations (\ref{equi000}) and (\ref{equi00}), where $N$ can be bounded in terms of the depths of the occurring configurations.
\end{proof}

\begin{remark}The natural analogue with $|1/N|$-Lipschitz centers for some integer $N>0$ of Theorem \ref{Lipcenter} in mixed characteristic also holds. We leave its proof to the reader. Instead, we make explicit the analogue of Proposition \ref{inj-cons}, as Proposition \ref{inj-cons-mixed}.
\end{remark}

\begin{prop}[Injectivity versus constancy
]\label{inj-cons-mixed}
Suppose that $(K,\cL)$ is mixed tame.
Let $F:X\subset K\to K$ be $\cL$-definable. Then there exist an integer $s \geq 0$ and an $\cL$-definable function
$$
g:X\to K_s^s
$$
such that for each $\eta\in K_s^s$ the restriction of $F$ to $g^{-1}(\eta)$ is injective or constant.
\end{prop}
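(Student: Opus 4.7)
The plan is to mirror the proof of Proposition \ref{inj-cons}, performing the substitutions already indicated in the proof of Theorem \ref{Lipmixed}: replace $k$ by residue rings $K_s$ for appropriate $s$, $\ac$ by $\ac_s$, and configurations in the sense of Definition \ref{config} by depth-$n$ configurations in the sense of Definition \ref{config2}.

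First, using condition (2) of Definition \ref{mixKtame} together with logical compactness and passage to test pairs (adding the parameter $\eta$ of an intermediate partition $g_{00}:X\to K_{s_0}^{s_0}$ as a constant), I would reduce to the situation where $F$ is s-continuous on $X$ and moreover either locally injective or locally constant. As in the original proof, the combination of infinitely many candidate partitions into a single $\cL$-definable $g$ is done by taking the minimal index at which the desired alternative holds, and the depth $n$ of the configurations that appear can be bounded uniformly by logical compactness.

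In the locally injective case, I would apply condition (1) of mixed tameness to the graph $G(F) \subset K \times K$, viewing the second factor as the auxiliary parameter $t$ through a test-pair argument. This yields $\cL$-definable $g_0:G(F)\to K_s^s$ and $c:K_s^s\times F(X)\to K$ such that $g_0^{-1}(\eta)_t$ is in $c(\eta,t)$-config with some depth $n$; setting $g(x):=g_0(x,F(x))$, local injectivity of $F$ combined with the description of a set in $c(\eta,t)$-config forces $F^{-1}(t)\cap g^{-1}(\eta)$ to sit inside the range of $c(\cdot,t)$, giving a definable inverse and hence injectivity of $F$ on $g^{-1}(\eta)$. In the locally constant case, s-continuity and local constancy give a definable map $h:X\to S$ into an auxiliary set $S$ on whose fibers $F$ is constant; the mixed analogue of Lemma \ref{fin} (which follows from condition (1) of Definition \ref{mixKtame} by the same factorization-of-a-graph argument, forcing the sets $U_\gamma$ to be empty unless $A_{\xi,\gamma}=\{0\}$) shows that $F(X)$ contains no open ball. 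Applying condition (1) again to $F(X)\subset K$ produces definable $g_1:F(X)\to K_s^s$ and $c:K_s^s\to K$ with $F(X)$ contained in the range of $c$; defining $g(x)$ as the unique $\eta$ with $c(\eta)=F(x)$ finishes the proof.

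The only nontrivial point in the translation is the mixed-characteristic analogue of Lemma \ref{fin}, since auxiliary sets $S$ in Definition \ref{config2} now mix copies of $\Gamma$ with residue rings $K_n$ of varying depth; I expect this to be the main obstacle, but the factorization argument of Lemma \ref{fin} applies verbatim once one knows that in a $c(\eta)$-config with depth $n$ the factor $A_{\xi,\gamma}$ is a singleton only when it equals $\{0\}$, which is immediate from Definition \ref{config2}. Everything else, including the uniform choice of depth $n$ and the combination of the two alternatives via a minimal-index trick, is a direct rewriting of the equicharacteristic argument.
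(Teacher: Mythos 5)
Your proof is correct and matches the paper's intended argument: the paper's own proof is just the one-line instruction to perform the $k \leadsto K_s$, $\ac \leadsto \ac_s$, and Definition \ref{config} $\leadsto$ Definition \ref{config2} substitutions described in the proof of Theorem \ref{Lipmixed}, which is precisely what you carry out. Your identification of the mixed analogue of Lemma \ref{fin} as the only point requiring a check, together with your observation that $A_{\xi,\gamma}$ is a singleton only when it equals $\{0\}$ in the mixed setting, is exactly the right thing to verify.
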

\begin{proof}
Similar adaptation of the proof of Proposition \ref{inj-cons} as explained in the proof of Theorem \ref{Lipmixed}.
\end{proof}

\subsection{Examples and some corollaries}\label{ex-tame}




The following proposition provides examples of (mixed) tame structures.

\begin{prop}[\cite{CLip}, Theorem 6.3.7]\label{pex}
Let $\LPas$ be the generalized Denef-Pas language.
Suppose that $K$ is a valued field of characteristic zero, equipped with angular component maps $\ac_n$, and, a separated analytic $\cA$-structure as in Definition 4.1.6 of \cite{CLip}, where $\cA$ is a Weierstrass system as in Definition 4.1.5 of \cite{CLip}, and write $\cL$ to denote the corresponding expansion of $\LPas$. Then $(K,\cL)$ is tame, resp.~mixed tame, if $K$ is of equicharacteristic zero, resp.~of mixed characteristic.
\end{prop}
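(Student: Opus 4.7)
The plan is to deduce both conditions of (mixed) tameness from the cell decomposition and Jacobian property available in the generalized Denef--Pas language augmented with a separated analytic $\cA$-structure, which are precisely what Theorem 6.3.7 and the surrounding results of \cite{CLip} provide. Since tameness is a two-part condition, I would handle the two parts separately and then use preservation under elementary equivalence and addition of constants to upgrade weak tameness to tameness.

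For condition (1) on tame configurations, I would appeal to cell decomposition in this analytic Denef--Pas setting. Cell decomposition partitions any $\cL$-definable set $T\subset K\times S$ (with $S$ auxiliary, so a product of residue rings and value group copies) into cells whose $K$-fibers over the auxiliary parameters take exactly the shape $(c(\eta) + A_{\xi,\gamma})\times U_\gamma$ appearing in Definitions \ref{config} and \ref{config2}. The center map $c$ thus produced takes values in $K$ but is parametrized by a residue-ring/value-group variable $\eta$; combining the cell index with the parameter $\eta$ into a single map $g$ into $k^s$ (resp. $K_s^s$) gives the data required for an $\cL$-tame config. The technical requirement that the range of $c$ contains no open ball is handled by the $b$-minimal dimension theory (which applies via Corollary \ref{b} once we know tameness, but is established in \cite{CLip} independently via dimension of definable sets): images in $K$ of definable maps from auxiliary sorts have empty interior.

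For condition (2) on s-continuity, I would unpack the Jacobian property of Theorem 6.3.7 of \cite{CLip}. In the form stated there, that property produces, for any definable $F:X\subset K\to K$, a finite partition of $X$ indexed by residue-ring values, on each piece of which $F$ maps open balls to open balls (or to singletons) and on which one has an equality $|F(x)-F(y)| = \gamma|x-y|$ for $x,y$ varying in a small enough ball, with $\gamma$ independent of the ball. Translating the indexing of the partition into an $\cL$-definable function $g:X\to k^s$ (equicharacteristic zero case) or $g:X\to K_s^s$ (mixed case) gives exactly the s-continuity required by Definition \ref{defjacprop}.

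Finally, both the cell decomposition and the Jacobian property in \cite{CLip} are proved uniformly in definable parameters and hold in any model of the relevant theory. Consequently they pass to every elementary extension $M$ of $K$ and remain true after adjoining constants from $M$ to the language, so every test pair $(M,L)$ is weakly tame; this upgrades weak tameness into tameness (resp. mixed tameness). The only difference between the equicharacteristic zero case and the mixed characteristic case lies in replacing the single residue field $k$ and angular component $\ac$ by the compatible system $(K_n,\ac_n)_{n\ge 1}$, and the corresponding Jacobian property of \cite{CLip} is formulated in exactly this uniform way, so no additional argument is needed. The main subtlety, as always in transferring results of \cite{CLip}, is keeping track of the depth $n$ of the residue ring used in the tame configuration, but this depth is tracked throughout the cell decomposition machinery of \cite{CLip} and survives to give the depth appearing in Definition \ref{config2}.
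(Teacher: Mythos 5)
Your proposal follows essentially the same route as the paper: both deduce the statement from Theorem 6.3.7 of \cite{CLip}, and your write-up is a reasonable unpacking of how the cell decomposition and Jacobian property in that reference furnish the two clauses of (weak) tameness, with uniformity in definable parameters carrying the result over to all test pairs. Two small points the paper's one-line proof flags that you leave implicit: first, Theorem 6.3.7 of \cite{CLip} is formulated with sorts for the quotients $K^\times/(1+n\cM_K)$ rather than the residue rings $R_n$ used here, so a short translation between the two sort systems is needed before the results apply verbatim; second, the hypotheses of the proposition do not explicitly say $K$ is henselian (which the cell decomposition and Jacobian property of \cite{CLip} require), but this is automatic because $K$ carries a separated analytic $\cA$-structure with $\cA$ a Weierstrass system. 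Neither omission changes the validity of your argument.
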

\begin{proof}
One readily derives this statement from the version of Theorem 6.3.7 of \cite{CLip} which is formulated with sorts for quotients $K^\times / 1+ n\cM_K$ with $\cM_K$ the maximal ideal of the valuation ring of $K$, instead of with the sorts $R_n$. Note that $K$ is automatically henselian because it has a separated analytic $\cA$-structure with $\cA$ a Weierstrass system.
\end{proof}

In some specific cases Proposition \ref{pex} follows, alternatively, by results from \cite{D2}, \cite{Pas1}, \cite{Pas2}, \cite{Ccell}, resp.~\cite{CLR}; see also \cite{Mourgues} for a related, one-sorted result in the $p$-adic subanalytic case.

All the structures that we will use in Section \ref{sec:2} are tame or mixed tame, by Examples 4.4(1) and 4.4(13) of \cite{CLip}, Section 3.4 of \cite{CLip2}, and by Proposition \ref{pex}.

Note that by compactness, family versions of Theorems \ref{Lip}, \ref{Lipmixed} and  \ref{Lipcenter} follow naturally. Uniform versions in all models of a tame theory follow likewise.

\section{Non-archimedean Yomdin-Gromov parametrizations with Taylor approximation}\label{sec:2}\label{sec:GYT}

\subsection{}

In Section \ref{sec:5}, piecewise Lipschitz continuity was obtained for a definable function with bounded first partial derivatives. In this section, we will show that one can parametrize any definable set by a small set of maps  with bounded partial derivatives up to any given finite order.
In the previous sentence, a small set of maps means a set of maps indexed definably by the residue field, or more generally, some residue rings $L_N$ for some $N>0$.
We will use piecewise Lipschitz continuity from the previous section, together with new techniques using a strong kind of analyticity. We will
furthermore define a property $T_r$ for approximation by Taylor polynomial of degree $r-1$ with remainder term of degree $r$ and the property $T_r$ will be required in our parametrizations (see Theorem \ref{GYT}). To distinguish from the more abstract setting of Section \ref{sec:5}, we will write $L$, instead of $K$, in this section, where  $L$ will be a
valued field.

\subsection{}\label{subsec:GYT}

Let $L$ be a complete, discretely valued field of characteristic zero such that, for each integer $n>0$ the set $P_n(L)$ of the $n$-th powers in $L^\times$ has finite index in $L^\times$.
Write $\cO_L$ for the valuation ring of $L$ with maximal ideal $\cM_L$, and
residue field $k_L$. Let us choose a uniformizer $\varpi_L$ of $\cO_L$ and let us write $p_L\geq 0$ for the characteristic of $k_L$.
Write $\ord:L^\times \to \ZZ$ for the (surjective) valuation map and write $|\cdot|$ for the multiplicative norm on $L$ with normalization $|\varpi_L|=e_L$ for some real number $e_L<1$.
When $k_L$ is finite  we set $e_L = \vert k_L \vert^{-1}$. As usual, the norm $|x|$ of a tuple $x=(x_1,\ldots,x_n)$ is set to be the maximum of $|x_i|$ for $i=1,\ldots,n$, and, in the case where $L$ is a $p$-adic field, $e_L$ is taken to be the inverse of the number of elements in $k_L$.

Further, for each $n\geq 1$ write $L_n$ for $\cO_L\bmod n\cM_L$ and write $\ac_n : L \to L_n$ for the function which sends $0$ to $0$ and nonzero $x\in L$ to $x\varpi_L^{-\ord x} \bmod n\cM_L$. Note that, if $k_L$ has characteristic $0$, then one has $L_n=L_1$ and $\ac_n=\ac_1$ for all $n\geq 1$. We also write $\ac$ for $\ac_1$.

Let $\LPas^L$ be the first order language having sorts for $L$, the $L_n$ for $n\geq 1$, and $\ZZ$, and having as symbols the ring language with parameters from $L$ on $L$, the ring language on $k_L$, the Presburger language $(0,1+,-,\leq,\{\equiv_n\}_{n>1})$ on $\ZZ$, the valuation map $\ord:L^\times \to \ZZ$, and the maps $\ac_n:L\to L_n$ for all $n\geq 1$.

A function
$$
f: L^n\to L 
$$
which satisfies $f(x)=0$ whenever $x\in L^n\setminus \cO_L^n$ is called a restricted analytic function if there is a power series $\sum_{i\in\NN^n} a_iX^i$ in $\cO_L\llb X_1,\ldots,X_n \rrb$, converging on $\cO_L^n$, such that $f(x) = \sum_{i\in\NN^n} a_ix^i$ for $x\in\cO_L^n$.
Let $\Lan^L$ be the language consisting of $\LPas^L$ and all the restricted analytic functions $L^n\to L$ for all $n\geq 0$.
An $\LPas^L$-definable subset of $L^n$ is often called a semialgebraic subset of $L^n$.
An $\Lan^L$-definable subset of $L^n$ is called a subanalytic subset of $L^n$.
Let $\cL$ be either $\LPas^L$ or $\Lan^L$. From now on in Section \ref{sec:2}, definable sets and functions will be so for the language $\cL$. Note that the study of definable sets was initiated in the works of Macintyre \cite{Mac}, Denef and van den Dries \cite{DvdD} in the $p$-adic case, and was generalized later to this and other settings in e.g. \cite{CLR}, \cite{BelairMS}, \cite{CLip}, and \cite{Rid}.

For a nonempty definable set $X\subset L^n$, the dimension of $X$ is defined as the largest integer $m\leq n$ such that, for at least one of the coordinate projections $p:L^n\to L^m$, the set $p(X)$ has nonempty interior for the valuation topology on $L^m$. The empty set is given dimension $-\infty$.

For an integer $r\geq 0$, and similarly for $r=+\infty$, the $C^r$-norm of a $C^r$-function $f=(f_1,\ldots,f_n):U\to L^n$ on an open $U\subset L^m$ is defined as the supremum over all $x$ in $U$, all $i=1,\ldots,n$, and all $\alpha \in \mathbb{N}^m$ with $\vert \alpha \vert \leq r$, of the values
\begin{equation}\label{Cr}
\Bigl \vert \frac{1}{ \alpha !} \frac{\partial^{\alpha} f_i}{\partial x^{\alpha}} (x) \Bigr \vert, 
\end{equation}
where $\alpha!$ stands for $\prod_{j=1}^m (\alpha_j!)$, and $\vert \alpha \vert $ for $\sum_j \alpha_j$.

For a $C^r$-function $(f_i)_i=f:U\subset L^m\to L^n$ and $y\in U$, write $T^{<r}_{y, f}$ (or $T^{\le r-1}_{y, f}$) for the tuple of the Taylor polynomials of the $f_i$ at $y$ of degree $r-1$. We now define a notion of (global) approximation by Taylor polynomials.

\begin{defn}\label{Tr} Let $X$ be a subset of $L^m$. Let $r$ be a positive integer.
We say that a map
$f = (f_1, \cdots, f_n) : X \to  L^n$ satisfies
$T_r $ (on $X$) if $X$ is open in $L^m$, $f$ is $C^r$ with $C^r$-norm not larger than $1$, and
for every $x$ and $y$ in $X$ one has
$$
\vert f (x) - T^{< r}_{y, f} (x) \vert \leq \vert x - y \vert^{r}.
$$
\end{defn}

If a map $f$ satisfies $T_r$ on a subset of  $\cal O_L^m$, then it also satisfies $T_{\ell}$ for any $\ell$ with $1\leq \ell \leq r$, by  the ultrametric inequality. For a $C^1$-function from an open subset of $L^m$ to $\cO_L^n$, satisfying $T_1$ is equivalent to being Lipschitz continuous with Lipschitz constant $1$.

\begin{defn}
Let $X\subset L^n$ be a definable set of dimension $m$.
A family $f_i:P_i\to X$ of definable functions for $i$ running over some set $I$ and with $P_i\subset \cO_L^m$, is called a $T_r$-parametrization of $X$ if each of the $f_i$ satisfies $T_r$ and
$$
X = \bigcup_{i\in I} f_i(P_i).
$$
\end{defn}

A family $X$ of sets $X_y$ for $y$ running over a definable set $Y$ is called a definable family if $\{(x,y)\mid x\in X_y,\ y\in Y\}$ is a definable set. A collection of maps $f_y$ for $y$ running over a definable set $Y$ is called a definable family of maps if the collection $f$ of the graphs of the $f_y$ is a definable family of sets.
Recall that for $f:D\subset A\times B\to C$ a map and $a\in A$, we write $D_a$ for the set $\{b\in B\mid (a,b)\in D\}$ and $f(a,\cdot)$ or $f_a$ for the function $b\mapsto f(a,b)$ on $D_a$.

\begin{thm}\label{GYT}
Let $n\geq 0$, $m\geq 0$ and $r >0$ be integers and let $(X_y)_{y\in Y}$ be a definable family of subsets $X_y \subset \cO_L^n$ for $y$ running over a definable set $Y$. Suppose that $X_y$ has dimension $m$ for each $y\in Y$.
Then there exist an integer $N$ and a definable family $g=(g_{y,i})_{y\in Y, i\in L_N^N}$ of definable functions
$$
g_{y,i}:P_{y,i}   \to X_y 
$$
such that $P_{y,i} \subset \cO_L^{m}$ and for each $y$, $(g_{y,i})_{i\in L_N^N}$ forms a $T_r$-parametrization of $X_y$.
Namely,
$$
X_y = \bigcup_{i\in L_N^N} g_{y,i}(P_{y,i}) \mbox{ for each } y\in Y,
$$
and $g_{y,i}$ satisfies $T_r$ on $P_{y,i}$ for each $y\in Y$ and each $i\in L_N^N$.
\end{thm}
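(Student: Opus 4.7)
The plan is to prove the theorem by induction on the order of approximation $r$, with an inner induction on the dimension $m$, and to carry out everything uniformly in the family parameter $y$ (which is a routine use of logical compactness). The base case $r=1$ essentially reduces to the piecewise Lipschitz continuity results of Section \ref{sec:5}: using $b$-minimality (Corollary \ref{b}) and the cell decomposition machinery available in $\LPas^L$ and $\Lan^L$, one first reduces to the case where $X_y$ is the graph of a definable function $h_y : U_y \subset \cO_L^m \to \cO_L^{n-m}$ with $U_y$ open. Theorem \ref{Lipmixed} (or \ref{Lip} in equicharacteristic zero) then produces a definable partition by parameters in $L_N^N$ on which each component of $h_y$ is globally $|1/N|$-Lipschitz; after precomposing with a scaling $x \mapsto \varpi_L^k x$ for suitable $k$, each piece satisfies $T_1$.

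The main new ingredient for $r>1$ is the analytic structure of the ambient language. Since $\cL$ is either $\LPas^L$ or $\Lan^L$ over a complete, discretely valued $L$, definable functions are piecewise given by convergent power series on sufficiently small balls. I would partition the domain further (by residue-ring parameters) so that on each piece the parametrizing map extends to an analytic function $f$ on a ball $B$ of some radius $|\varpi_L|^\ell$, represented by a power series whose Taylor coefficients have norm at most $1$. By the ultrametric inequality, for any such $f(x) = \sum_\alpha a_\alpha (x-y)^\alpha$ with $|a_\alpha|\leq 1$ for all $|\alpha|\geq r$, one has
\[
|f(x) - T^{<r}_{y,f}(x)| = \Bigl|\sum_{|\alpha|\geq r} a_\alpha (x-y)^\alpha\Bigr| \leq |x-y|^r
\]
globally on $B$. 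This replaces the convexity and mean-value-theorem argument of the real case, and after rescaling $B$ to $\cO_L^m$ yields the required $T_r$.

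For the inductive step from $r$ to $r+1$, I would follow a Yomdin-Gromov style scheme adapted to the non-archimedean setting. Given a $T_r$-parametrization $g_{y,i}$ of $X_y$, the first partial derivatives $\partial g_{y,i}/\partial x_j$ are definable, and the inductive hypothesis gives a $T_r$-parametrization of them, i.e., effective control on the $(r+1)$-th partial derivatives of $g_{y,i}$ after precomposition with further definable maps. Where the $C^{r+1}$-norm would exceed $1$, the piece is subdivided into balls of radius $|\varpi_L|$ indexed by $L_1$ and each is rescaled by $\varpi_L$; each such scaling reduces the norms of the top-order derivatives by a factor of $e_L<1$, so a number of iterations depending only on $r$ (not on the piece) suffices to drive the $C^{r+1}$-norm below $1$. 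The parameters accumulated in this process all live in some residue ring $L_N^N$ with $N$ depending only on the family and on $r$.

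The main obstacle is the global control of the Taylor remainder on non-convex pieces, which is precisely the issue that blocks a direct transposition of the real proof. The resolution rests on two pillars: first, the completeness and the discreteness of the valuation together with the analytic structure of the language, which together ensure that after fine enough partitioning each parametrizing map is represented on its entire domain by a convergent power series with controlled coefficients; second, the ultrametric inequality, which upgrades this pointwise control of coefficients into a genuinely global estimate of the Taylor remainder. Definable Skolem functions, which are not available in the generality we work in, are avoided by invoking Theorem \ref{Lipcenter} in the Lipschitz step.
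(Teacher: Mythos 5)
Your high-level intuition is right --- global analyticity together with the ultrametric inequality is what replaces convexity and the mean value theorem --- but the specific scheme you propose has genuine gaps, and it diverges from the paper's proof at two structural points.

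First, the base case $r=1$ is not complete. Theorems \ref{Lip} and \ref{Lipmixed} take as hypothesis that the function is already \emph{locally} $1$-Lipschitz, and your outline does not say how to achieve that. Rescaling the domain by $x \mapsto \varpi_L^k x$ does not fix this: the local Lipschitz constant of a definable $h_y$ on $\cO_L^m$ can be unbounded when the residue field is infinite (no compactness over $\CC\llp t\rrp$), so no uniformly bounded choice of $k$, and hence no single $L_N^N$, accumulates the rescalings. The paper's mechanism is different in kind: it passes to the algebraic closure $L^{\rm alg}$, uses the term structure and quantifier elimination (Proposition \ref{terms}, Theorem \ref{qeL1h}), and inverts a variable along which $|\partial h/\partial x_j|$ is maximal, as in Corollary \ref{cor:invers}; by the chain rule, inversion turns a large derivative into a small one, something a rescaling cannot do. The $L^{\rm alg}$ step is not optional here: the $C^1$-bound must hold on the associated ball $B_{\rm as}\subset (L^{\rm alg})^m$, because that is what the Gauss-norm lemmas need.

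Second, the inductive step from $r$ to $r+1$ rests on the unproven claim that ``a number of iterations depending only on $r$ (not on the piece) suffices to drive the $C^{r+1}$-norm below $1$.'' Without compactness of $\cO_L^m$ there is no a priori bound on the top-order derivatives of a definable $C^r$-function, so the number of $\varpi_L$-rescalings can blow up with the piece, defeating uniformity; this is exactly the obstruction the paper points out when noting that zoom-and-scale fails over $\CC\llp t\rrp$. The paper in fact avoids an $r\to r+1$ induction altogether: having proved $r=1$, it handles general $r$ in a single step by precomposing with power maps $h_N : x \mapsto x^N$, where $N$ depends only on $r$ (and $p_L$). The point of Lemma \ref{gauss}, Corollary \ref{cor:gauss}, Lemma \ref{gauss1a} and Proposition \ref{gauss1} is that a $C^1$-bound on $B_{\rm as}$ for a globally analytic function controls the \emph{Gauss norm}, hence all higher Taylor coefficients at once, and composition with $h_N$ then produces $C^r$-norm $\le 1$ and $T_r$ on each box. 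This is also where the coefficient bound $|a_\alpha|\le 1$ that you use in your display really comes from; asserting it after ``fine enough partitioning'' is not justified without the Gauss-norm argument. Finally, the passage from $T_r$ on boxes plus $T_1$ globally to $T_r$ globally requires the short intermediate-point argument at the end of the paper's proof (the point $z$ with $z_i\in\{v_i,w_i\}$), which your outline also omits.
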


Let us first describe the strategy of the proof of Theorem \ref{GYT}. We introduce a global notion of analyticity (global in the sense that the radii of convergence of the power series are large in a certain sense), and show a globally analytic cell decomposition theorem. A first step
towards Theorem \ref{GYT} is to parametrize our set with functions having small $C^1$-norm. This is   done  by inverting the roles of some of the coordinates and using the chain rule to bound the $C^1$-norm by  $1$.
The analyticity allows us to go further by working with Gauss-norms on balls and on boxes (defined as products of balls). This has two uses: to obtain that composition with well chosen power maps makes the $C^r$-norm less or equal to $1$, and to obtain that $C^r$-norm bounded by $1$ implies $T_r$ on each maximal ball included in the domain. Moreover, by using the results of the previous section on $T_1$, we reduce to the situation where one has globally $T_1$ and locally (on maximal balls) $T_r$. By composing with power maps once more, the previous $T_1$ condition on far away points implies $T_r$, which then  follows globally on the pieces. Finally, Theorem \ref{GYTs} strengthens Theorem \ref{GYT}.

\begin{defn}[Cell around zero] Consider integers $n\geq 0$ and $n_i>0$ for $i=1,\ldots, n$.
A nonempty definable set $X\subset L^n$ is called a cell around zero with depth $(n_i)_{i\in \{1,\cdots,n\}}$ if it is of the form
$$
\{x\in L^n\mid \ac_{n_i}(x_i)=\xi_i,\, (\vert x_1|, \cdots,\vert x_n \vert)\in G \},
$$
for some set $G\subset \RR^n$ and some $\xi_i\in L_{n_i}$. If moreover $G$ is a subset of $(\RR^\times)^n$, where $(\RR^\times)^0=\{0\}$ and $L^0=\{0\}$ by convention, then $X$ is called an open cell around zero.
More generally, for nonempty definable sets $Y$ and $X\subset Y\times L^n$, the set $X$ is called a cell around zero over $Y$ with depth $(n_i)_{i\in \{1,\cdots,n\}}$ if it is of the form
$$
\{(y,x) \in Y\times L^n\mid y\in Y,\ \ac_{n_i}(x_i)=\xi_{i}(y),\, (y,(|x_i|)_i)\in G  \},
$$
for some set $G\subset Y\times \RR^n$ and some definable functions $\xi_i: Y\to L_{n_i}$. If moreover $G$ is a subset of $Y\times (\RR^\times)^n$, then $X$ is called an open cell around zero over $Y$. Note that the definability of $G$ is not an issue here since $X$ is assumed to be definable.
\end{defn}

Note that some of the sets $X_y$ for some $y\in Y$ may be empty.

By a box we mean a Cartesian product of closed balls, where a closed ball is a subset of $L$ of the form
$$
\{x\in L\mid |x-c| \leq |r|\}
$$
for some $r$ in $L^\times$ and some $c\in L$. For a box $B$ of the form $\prod_{i=1}^n\{x\in L\mid |x-c_i| \leq |r_i|\}$, we define the associated set $B_{\rm as}$ as
\begin{equation}\label{Bass}
B_{\rm as}:=\prod_{i=1}^n\{x\in L^{\rm alg}\mid |x-c_i| < |r_i/\varpi_L|\},
\end{equation}
where $L^{\rm alg}$ is an algebraic closure of $L$ with norm extending the one on $L$. Note that $B\subset B_{\rm as}$.
We extend the definition of $C^r$-norm, for an integer $r\geq 0$ and also for $r=+\infty$, of a $C^r$-function $f=(f_1,\ldots,f_n):U\to (L^{\rm alg})^n$ on an open $U\subset (L^{\rm alg})^m$ in the obvious way.

\begin{defn}[Global analyticity]\label{glob}
Let $f:X\subset L^m\to L^n$ be a definable function on an open set $X$.
Say that $f$ is globally analytic on $X$ if, for any box $B$ contained in $X$, the restriction of $f$ to $B$ is given by a tuple of power series, converging on the associated set $B_{\rm as}$. By this we mean that for any $b\in B$, there is a power series $\sum a_i x^i $ such that $f(x) = \sum a_i (x-b)^i $ for all $x\in B$ and which converges on $-b +B_{\rm as}$.
\end{defn}

The notion of globally analytic maps will be most useful when the domain is an open cell around zero.
We can now give in Theorem \ref{GYTs} a strengthened version of Theorem \ref{GYT} as well as a variant of Theorem \ref{GYTs} in Proposition \ref{gyts}.

\begin{thm}\label{GYTs}
With data and notation from Theorem \ref{GYT}, one can take $g$ as in Theorem \ref{GYT} and such that moreover $g_{y,i}$ is globally analytic for each $y,i$ and such that $P$ is an open cell around zero over $Y\times L_N^N$, where $P=\{(y,i,x)\mid y\in Y,\ i\in L_N^N,\ x\in P_{y,i}\}$.
\end{thm}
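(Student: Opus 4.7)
The plan is to prove Theorem \ref{GYTs} directly, from which Theorem \ref{GYT} is immediate by forgetting the extra structural information. The proof follows the strategy sketched just before the statement.

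First, combining the tameness of $(L,\cL)$ (Proposition \ref{pex}) with the analyticity built into $\Lan^L$ (or Weierstrass preparation in the semialgebraic case), I would prove a \emph{globally analytic cell decomposition}: the family $(X_y)_{y\in Y}$ splits definably, via an index in some $L_{N_0}^{N_0}$, into pieces each of which, after a coordinate permutation encoded by the index, is the graph over an open cell around zero in $\cO_L^m$ of a globally analytic definable map in the sense of Definition \ref{glob}. This reduces everything to producing the required parametrization for such a graph $g:P\to \cO_L^{n-m}$.

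Next, I would arrange $C^r$-norm $\leq 1$ together with local $T_r$ on every maximal ball. The family version of Corollary \ref{cor:invers}, combined with the chain rule for $s$-continuous functions, lets one swap coordinates on which some partial derivative is too large with dependent coordinates until every first partial derivative of $g$ is bounded by $1$. Global analyticity then yields uniform control of $|(1/\alpha!)\partial^\alpha g|$ on each box $B\subset P$ via the Gauss norm on $B_{\rm as}$. Pre-composing $g$ with coordinate-wise power maps $x_j\mapsto \varpi_L^{e_j}x_j^{N_j}$, with $N_j$ large enough depending on $r$ and on the depth of $P$ in the $j$-th coordinate, scales every Taylor coefficient down so that the new $C^r$-norm is $\leq 1$; both operations preserve global analyticity and the open-cell-around-zero structure of the domain, at the cost of enlarging the discrete index. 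On any maximal closed ball $B \subset P$, the desired estimate $|g(x) - T^{<r}_{y,g}(x)| \leq |x-y|^r$ then follows at once from the Gauss-norm bound on the tail of the convergent power series on $B_{\rm as}$.

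The main obstacle is upgrading $T_r$ on each maximal ball to $T_r$ on the whole of $P$; this is nontrivial precisely because $P$ is totally disconnected when $p_L > 0$. My plan is to apply Theorem \ref{Lip} (or Theorem \ref{Lipmixed} in mixed characteristic) not to $g$ itself but to the divided-difference expression
\[
R(x,y) \;=\; \frac{g(x) - T^{<r}_{y,g}(x)}{(x-y)^{r}},
\]
interpreted coordinate-wise and after a further definable partitioning separating vanishing denominators. The previous paragraph makes $R$ locally $1$-bounded, and iterating the power-map scaling one order further makes $R$ locally $1$-Lipschitz. Theorem \ref{Lip}/\ref{Lipmixed} then yields global $1$-Lipschitz continuity of $R$ on each piece, up to a harmless factor $|1/N|$ in the mixed characteristic case that is absorbed by one last coordinate-wise power-map composition. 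Translating the resulting global bound on $R$ back to $g$ gives $T_r$ globally on $P$, and packaging all the discrete data accumulated along the way (coordinate permutations, cell data, power-map exponents) into a single index $i \in L_N^N$ for $N$ large enough delivers the asserted family.
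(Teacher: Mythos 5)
Your overall plan (globally analytic cell decomposition, coordinate inversion to get $C^1$-norm $\leq 1$, Gauss norms on $B_{\rm as}$, composition with power maps for $C^r \leq 1$ and local $T_r$ on maximal boxes) tracks the paper's strategy closely: the paper establishes the $r=1$ case first via Theorem~\ref{cda}, passage to $L^{\rm alg}$ with quantifier elimination (Theorem~\ref{qeL1h}), Corollary~\ref{cor:invers}, Theorems~\ref{Lip}/\ref{Lipmixed}, and Corollary~\ref{cor:gauss}, then obtains local $T_r$ on boxes via Proposition~\ref{gauss1}.

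The genuine gap is in your final step, upgrading local $T_r$ to $T_r$ on all of $P_{y,i}$. You propose applying Theorem~\ref{Lip} to the divided-difference quotient $R(x,y)$. Two things go wrong. First, $R$ is a function of the \emph{pair} $(x,y)$, so Theorem~\ref{Lip} would produce a definable partition of (a subset of) $P\times P$, indexed by residue-field parameters, on whose pieces $R$ is globally $1$-Lipschitz; that is not the same as a partition of $P$ into pieces $P_{y,i}$ on which $T_r$ holds, and there is no reason the pieces of $P\times P$ decompose as squares $P'\times P'$. Second, global $1$-Lipschitz continuity of $R$ on a piece does not by itself give the required bound $|R|\leq 1$: one needs an anchor point in each piece where $|R|\leq 1$. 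The natural anchors are pairs $(x,y)$ close to the diagonal, where your local estimate applies, but pieces of the partition of $P\times P$ can sit entirely far from the diagonal, so the bound does not propagate. The paper sidesteps this entirely: having arranged $T_1$ globally on each piece and $T_r$ on each maximal box (both for the composition $\overline g$ of $g$ with power maps), it introduces, for arbitrary $v,w$ in the piece, the intermediary point $z$ with $z_i=v_i$ on the set of indices $i$ where $|v_i|=|w_i|$ and $z_i=w_i$ elsewhere. Then $z$ and $w$ share a maximal box (so local $T_r$ applies), while the comparison of $v$ and $z$ uses the identity $|v^r - z^r| = |v-z|^r$ (forced by the distinct-order indices) together with $T_1$ for $g$ and $N\geq r$ to close the ultrametric chain. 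This purely combinatorial/ultrametric argument is essential to the non-archimedean setting, and your proposal is missing a workable substitute for it.
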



\begin{prop}\label{gyts}
With data and notation from Theorem \ref{GYT}, one can take $g$ as in Theorem \ref{GYTs} such that moreover 
for each $y,i$, and for each box $B$ contained in $P_{y,i}$ and associated set $B_{\rm as}$, the power series corresponding to $g_{y,i}$ on $B$ satisfies $T_r$ on the whole of $B_{\rm as}$.
\end{prop}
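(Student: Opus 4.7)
The plan is to revisit the proof of Theorem~\ref{GYTs} and observe that the Gauss-norm estimates carried out there on the extended polydiscs $B_{\rm as}$ already yield the stronger $T_r$ property asserted by the proposition. The key analytic ingredient is the following: for a power series $\sigma(z)=\sum_\alpha a_\alpha(z-b)^\alpha$ that converges on an open polydisc $D\subset (L^{\rm alg})^m$ of polyradius $\rho=(\rho_i)$, if its Gauss norm on $D$, namely $\sup_\alpha|a_\alpha|\rho^{\alpha}$, is at most $\min_i\rho_i^r$, then $\sigma$ satisfies $T_r$ on~$D$. Indeed, re-expansion of $\sigma$ at any $y\in D$ preserves this Gauss-norm bound by translation invariance of the non-archimedean Gauss norm within a polydisc. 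The coefficients of the Taylor expansion around~$y$ thus satisfy $|\partial^\alpha\sigma(y)/\alpha!|\le\min_i\rho_i^r/\rho^\alpha$, and the remainder $\sum_{|\alpha|\ge r}\frac{1}{\alpha!}\partial^\alpha\sigma(y)(x-y)^\alpha$ is bounded termwise by $|x-y|^r$ using the ultrametric inequality together with $|x_i-y_i|<\rho_i$ in each coordinate; the $C^r$-norm bound demanded by Definition~\ref{Tr} follows from the same estimate applied to multi-indices $\alpha$ with $|\alpha|\le r$.

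In the proof of Theorem~\ref{GYTs}, the composition with power maps of the form $x\mapsto\varpi_L^k x^N$ is chosen precisely in order to ensure, for the globally analytic extension of $g_{y,i}$ to each $B_{\rm as}$ associated to a maximal ball $B\subset P_{y,i}$, the Gauss-norm bound displayed above. Once this bound is known on maximal balls, the corresponding statement is automatic on $B'_{\rm as}$ for any box $B'\subset B$: one has $B'_{\rm as}\subset B_{\rm as}$ because $|r'_i|\le|r_i|$ in each coordinate, and the Gauss norm on a sub-polydisc is bounded by the Gauss norm on the containing polydisc. Carrying out the argument of Theorem~\ref{GYTs} while keeping track of the Gauss-norm estimates on each $B_{\rm as}$ therefore yields $T_r$ on $B_{\rm as}$ for every box $B\subset P_{y,i}$, which is exactly the conclusion of the proposition.

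The main point to verify is that the power-map composition in the proof of Theorem~\ref{GYTs} indeed furnishes the Gauss-norm bound $\le\min_i|r_i/\varpi_L|^r$ on each $B_{\rm as}$. This follows because each composition $x\mapsto\varpi_L^k x^N$ uniformly shrinks every Taylor coefficient of the parametrization by a factor at least $|\varpi_L|^{k|\alpha|}$, while the exponent~$N$ enforces the required vanishing of low-order terms; choosing $k$ and~$N$ in terms of~$r$ and of the depths $n_i$ of the cells arising in Theorem~\ref{GYTs} (as already done in that proof) provides the required Gauss-norm bound uniformly in $(y,i)$. No further partitioning or enlargement of the residue ring $L_N^N$ is needed beyond what Theorem~\ref{GYTs} already provides, and the open-cell-around-zero structure of $P$ together with the global analyticity of each $g_{y,i}$ is preserved.
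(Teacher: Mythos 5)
Your overall plan matches the paper's: Proposition~\ref{gyts} is established jointly with Theorems~\ref{GYT} and~\ref{GYTs}, precisely because Proposition~\ref{gauss1} gives $T_r$ on $B'_{\rm as}$ (not merely on $B'$) after composing with power maps, and tracking this through the construction yields exactly the statement of the proposition. Your chief departure is to repackage the technical heart as a clean Gauss-norm criterion on polydiscs (which is indeed valid --- one checks $\rho_*\cdot \max_j(|x_j-y_j|/\rho_j)\leq |x-y|$ with $\rho_*=\min_i\rho_i$, and translation invariance of the non-archimedean Gauss norm gives the $C^r$-bound), whereas the paper argues concretely through derivative bounds: Lemma~\ref{gauss0} converts a sup-norm bound into bounds on $g^{(i)}/i!$, and Lemma~\ref{gauss1a}/Proposition~\ref{gauss1} combine these with the chain rule and Legendre's formula to obtain $T_r$ on $B'_{\rm as}$. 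The paper's concrete route is what makes visible the cell depth $n$, the factor $|n|^r/|i!|\leq 1$, and the choices of $N$ and $n$ --- that is where the mixed-characteristic bookkeeping lives, and your proposal glosses over it. Two details are off in your middle paragraph: the power maps used are $x\mapsto c\,x^N$ (Definition~\ref{fNb}), not $x\mapsto\varpi_L^k x^N$, and the operative mechanism is not a uniform coefficient shrinkage by $|\varpi_L|^{k|\alpha|}$ but the chain-rule relation $\alpha=\beta N - i$ producing the factor $|b'|^{N-i}$ in the Taylor-coefficient bound~(\ref{gNi}). Also, the passage from a box to a sub-box $B'\subset B$ should be justified by restriction of the $T_r$ property along $B'_{\rm as}\subset B_{\rm as}$, not by a Gauss-norm comparison on the sub-polydisc: the bound one would need there scales as $\min_i|r'_i/\varpi_L|^r$, which is potentially \emph{smaller}, not larger, than the bound you have on the ambient polydisc, so that comparison runs in the wrong direction. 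Since you explicitly defer to the GYTs proof for the actual verification, these are presentational rather than substantive issues.
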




\subsection{}

In order to prove Theorems \ref{GYT} and \ref{GYTs}, we give now some preliminary definitions.

For a closed ball
$$
\{x\in L\mid |x-c| \leq |r|\}
$$
with $r$ in $L^\times$ and $c\in L$, the real number $|r|$ is called the 
radius of the closed ball, while $\ord r$ is called the valuative radius. All balls from now on will be closed balls, as opposed to Section \ref{sec:5} where we  used open balls. A ball $B$ with $B\subset X$ for some set $X\subset L$ is called a maximal ball contained in $X$ if $B$ is a closed ball which is maximal for the inclusion among all closed balls contained in $X$. By convention, $L^0$ stands for $\{0\}$, and so do also $k_L^0$ and $\ZZ^0$, namely the definable set of a true formula without free variables. 

We complement the above notion of cells around zero by a notion of cells with a center.

\begin{defn}[Cell with center]\label{cellc}
Consider integers $n\geq 0$ and $n_i>0$ for $i=1,\ldots, n$.
For non empty definable sets $Y$ and $X\subset Y\times L^n$, the set $X$ is called a cell over $Y$ with center $(c_i)_{i=1, \cdots, n}$ and  depth $(n_i)_{i=1, \cdots, n}$ if it is of the form
$$
\{(y,x) \in Y\times L^n\mid y\in Y,\ \ac_{n_i}(x_i-c_i(x_{<i}))=\xi_{i}(y),\, (y,(|x_i-c_i(x_{<i})|)_i)\in G  \},
$$
for some set $G\subset Y\times \RR^n$ and some definable functions $\xi_i: Y\to L_{n_i}$ and $c_i:Y\times L^{i-1}\to L$, where $x_{<i}=(y,x_1,\ldots,x_{i-1})$. If moreover $G$ is a subset of $Y\times (\RR^\times)^n$, where $(\RR^\times)^0=\{0\}$, then $X$ is called an open cell over $Y$
(with center $(c_i)_{i=1, \cdots, n}$ and  depth $(n_i)_{i=1, \cdots, n}$).
\end{defn}

\begin{defn}[Associated cell around zero]\label{cell0}
Let $X$ be a cell over $Y$ with center, with notation from Definition \ref{cellc}. The cell around zero associated to $X$ is by definition the cell $X^{(0)}$ obtained by forgetting the centers, namely
$$
X^{(0)}=\{(y,x) \in Y\times L^n\mid y\in Y,\ \ac_{n_i}(x_i)=\xi_{i}(y),\, (y,(|x_i|)_i)\in G  \}
$$
with associated bijection $\theta_X:X\to X^{(0)}$ sending $(y,x)$ to $(y,(x_i-c_i(x_{<i}))_i)$.
For a definable map $f:X\to A$ there is the natural corresponding function $f^{(0)}= f\circ \theta_X^{-1}$ from $X^{(0)}$ to $A$.
\end{defn}

\begin{defn}[Globally analytic cells]\label{cella}
Suppose that $X\subset Y\times L^n$ is an open cell over $Y$.
If $n=0$ then $X$ is a globally analytic cell over $Y$. For $n>0$, if the image of $X$ under the coordinate projection $p$ from $Y\times L^n$ to $Y\times L^{n-1}$ is a globally analytic cell, and if $c_{n,y}^{(0)}$ is globally analytic on $p(X^{(0)})_y$ for each $y\in Y$ in the sense of Definition \ref{glob}, then $X$ is called a globally analytic cell over $Y$.
\end{defn}


\begin{thm}[Globally analytic Cell Decomposition]\label{cda}
Given definable sets $Y$ and $X\subset Y\times L^n$ and a definable map $f:X\to L^s$, there exist $N>0$ and a definable bijection
$$
X\to X'\subset L_N^N\times X \subset L_N^N\times Y\times L^n
$$
over $X$ such that $X'$ is the disjoint union of a cell with empty interior and an open globally analytic cell $A$ over $L_N^N\times Y$ such that
$f^{(0)}_{a,y}$ is globally analytic on $A^{(0)}_{a,y}$ for each $a\in L_N^N$ and $y\in Y$.
\end{thm}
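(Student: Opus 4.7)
The plan is to combine the analytic cell decomposition available for $\cL$ in the works of Denef--van den Dries \cite{DvdD}, Macintyre \cite{Mac}, Cluckers--Lipshitz--Robinson \cite{CLR}, and Cluckers--Lipshitz \cite{CLip} with a refinement step ensuring that both the centers and the function $f$ become globally analytic in the strong sense of Definition \ref{glob}. I would proceed by induction on $n$. The base $n=0$ is trivial since only the data of $f:Y\to L^s$ has to be prepared, and there is nothing to analytify in the valued-field variables.

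For the inductive step, first I would apply a standard cell decomposition for $\cL$ to $X\subset Y\times L^n$ relative to the last coordinate $x_n$ and to the coordinates of $f$. This yields a definable partition of $X$ into finitely many cells over $Y\times L^{n-1}$ with centers $c_n$ and depths $n_n$, together with a ``preparation'' of $f$ as a restricted-analytic term times an explicit monomial in $x_n-c_n$ of a controlled norm. Working over the projection $\pi(X)\subset Y\times L^{n-1}$, the induction hypothesis supplies a further decomposition of $\pi(X)$ so that, after passing to the associated cell around zero, the remaining centers and the auxiliary functions appearing in the preparation of $c_n$ and of $f$ are globally analytic. Cells with empty interior can be collected into the first summand of $X'$ and set aside.

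Second, I would refine the depth by a uniform multiplicative factor in order to ensure the strong convergence built into Definition \ref{glob}. The point is that if a restricted analytic function $h$ is given by a power series converging on $\cO_L^m$, then on any box $B$ of sufficiently small radius its Taylor expansion around any point of $B$ converges on the enlarged set $B_{\rm as}$ defined in \eqref{Bass}, since shrinking the radius by a factor $|\varpi_L|$ leaves room for the extra $|1/\varpi_L|$ appearing in $B_{\rm as}$. Replacing $n_i$ by $Nn_i$ for $N$ large enough (bounded uniformly in terms of the finitely many analytic data produced by the preparation theorem) forces every box inside the refined cell to satisfy this condition. Because the preparation of both the centers $c_i$ and the function $f^{(0)}$ involves only finitely many restricted-analytic building blocks, a single $N$ works. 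After this refinement, applying $\theta_X$ to pass to the associated cell around zero and then the constructed bijection $X\to X'\subset L_N^N\times X$ parametrizes the cell pieces by the residue-ring parameter $a\in L_N^N$ coming from the refined $\ac_N$-values, as required.

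The main obstacle is controlling the convergence radius enhancement $|1/\varpi_L|$ built into $B_{\rm as}$: one must ensure that every box inside the final cell has radius small enough that the power series of every analytic datum produced by the preparation converges on the slightly enlarged $B_{\rm as}$, not merely on $B$. This is precisely what forces the passage from the given depth $(n_i)$ to a deeper depth involving $N$, and it is the reason the parameter set is $L_N^N$ rather than the residue field. The mixed-characteristic case is the most delicate, as the interaction between the $\ac_N$-maps and Weierstrass-type preparation has to be tracked carefully; however, the sorts $L_n$ and angular component maps $\ac_n$ of $\LPas^L$ were introduced for exactly this purpose, so the refinement remains definable.
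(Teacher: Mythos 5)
Your proposal follows the paper in outline (induction on $n$, start from classical cell decomposition, refine depths to enforce Definition \ref{glob}), but it misses the central technical mechanism. You describe the key input as ``a preparation of $f$ as a restricted-analytic term times an explicit monomial in $x_n-c_n$ of controlled norm.'' That is not what cell decomposition for $\cL$ provides, and it does not capture the general shape of $\cL$-definable functions, which also involve division, $m$-th root extraction, and Henselian root functions. The paper's proof instead invokes Proposition \ref{terms}: after a definable bijection, $f$ is given by a tuple of terms in the expanded language $\cL^*$ of Definition \ref{hf1}, whose extra function symbols are precisely $^{-1}$, the root functions $(\cdot,\cdot)_e^{1/m}$, and the Hensel root functions $h_{m,e}$. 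The argument then runs a \emph{second} induction on term complexity: assuming the subterms $v_i^{(0)}$ are globally analytic and that $|v_{i,y}^{(0)}|$ and $\ac_M(v_{i,y}^{(0)})$ are constant on boxes, one must show that $h(v_1,\dots,v_m)^{(0)}$ is globally analytic for each symbol $h$ of $\cL^*$, and the choice of $M$ depends on $h$ (the case analysis is deferred to Lemma 6.3.15 of \cite{CLip}). Your proposal has no analogue of this second induction and no argument for why divisions, roots, and Hensel zeros become globally analytic on refined cells; these are exactly the cases where analyticity fails without the correct $\ac_M$-conditions.

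You also misidentify the main obstacle. The enhancement $B\rightsquigarrow B_{\rm as}$ is a mild issue: a strictly convergent power series with coefficients in $\cO_L$ automatically converges on $\cO_{L^{\rm alg}}^m$, so Taylor expansions around points of a box $B$ of radius $<1$ already converge on $B_{\rm as}$, and the only care needed is to exclude boxes of radius exactly $1$. The genuine difficulty — and the reason the parameter set is $L_N^N$ rather than just $k_L^s$ — is that the $\cL^*$-term reduction of Proposition \ref{terms} introduces auxiliary residue-ring variables, and the compatibility conditions for $h$ force increasingly fine $\ac_M$-constancy requirements on the subterms. Without the term-structure induction your depth-refinement step has nothing concrete to act on.
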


We define an expansion $\cL^*$ of $\cL$ similar to the one of (4.1) of \cite{CLR}, and to the one of Definition 6.1.7 of \cite{CLip}, by joining division and witnesses for henselian zeros and roots.
\begin{defn} \label{hf1}
Let $\cL^*$ be the expansion of $\cL\cup\{^{-1}\}$ obtained by joining to $\cL\cup\{^{-1}\}$ function symbols $(\cdot,\cdot)_e^{1/m}$ and $h_{m,e}$ for $e\geq 0$ and $m>1$, where on a henselian valued field $K$ of characteristic zero and with value group $\Gamma_K$ these functions are:
$$
(\cdot,\cdot)_e^{1/m}:K\times K_{e^2}\to K
$$
sends $(x,\xi)$ to the (unique) $m$-th root $y$ of
  $x$ with $\ac_{e}(y)\equiv \xi\bmod e\cM_K$ and $\ord(y)=z$, whenever simultaneously
  $\xi^m=\ac_{e^2}(x)$, $\ac_e(m)\not=0$,
  and $\ord(x)$ is divisible by $m$ in $\Gamma_K$, and to $0$
  otherwise;
$$
h_{m,e}:K^{m+1}\times K_{e^2} \to K
$$
sends $(a_0,\ldots,a_{m},\xi)$ to the unique $y$ satisfying
$\ord(y)=0$, $\ac_{e}(y)\equiv \xi\bmod e\cM_K$, and
$\sum_{i=0}^{m} a_{i} y^i=0$, whenever $\xi$ is a unit,
$\ord(a_i)\geq 0$, $\sum_{i=0}^m a_{i} \xi^i\equiv 0\bmod e^2\cM_K$, and
 $$
 f'(\xi)\not\equiv0\bmod e\cM_K
 $$
with $f'$
the derivative of $f$, and to $0$ otherwise.
\end{defn}

\begin{prop}\label{terms}
Given a definable function $f:X\to Y$, there exists a definable bijection $\lambda:X\to X'\subset X\times L_N^N$ over $X$ and a tuple of $\cL^*$-terms $h$ such that
$$
h(x') = f(x)
$$
for all $x\in X$ and with $x'= \lambda(x)$.
\end{prop}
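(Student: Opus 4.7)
The plan is to combine a cell decomposition theorem with the fact that $\cL^*$ contains just enough function symbols (division, Hensel witnesses $h_{m,e}$, and compatible $m$-th root symbols $(\cdot,\cdot)_e^{1/m}$) to name every center function arising in a cell decomposition by an $\cL^*$-term. The statement is an analogue (and in essence a reformulation) of the term--Skolemization results of \cite{CLR} and Proposition~6.1.7 of \cite{CLip}, adapted to the present setting; the proof strategy is the same.

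First, I would apply the (globally analytic) cell decomposition Theorem~\ref{cda} to the graph of $f$, viewed as a definable subset of $X\times Y$. This produces an integer $N>0$ and a definable bijection $\lambda:X\to X'\subset X\times L_N^N$ over $X$ such that on each fibre of the projection $X'\to L_N^N$ the function $f$, transported via $\lambda^{-1}$, is given by center coordinates of an open cell together with the coordinate projections coming from the ambient space. More precisely, one reduces, working piecewise in the parameter $a\in L_N^N$, to the situation where $f(x)$ is obtained from the tuple $x$ by a combination of: applying center functions $c_i$ of successive cells, inverting them, taking $m$-th roots, and solving simple henselian polynomial equations $\sum a_i y^i=0$ in one variable whose coefficients are themselves given by the previously constructed terms.

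Next, I would inductively turn every such operation into an $\cL^*$-term. The residue ring parameter $a\in L_N^N$ supplies exactly the data (angular components of order $e$ or $e^2$) needed to disambiguate the branch of $(\cdot,\cdot)_e^{1/m}$ and the root of $h_{m,e}$: these angular component data are precisely what one reads off from the cell description and the witness sort built in to $\cL^*$. Proceeding by induction on the depth of the cell decomposition, every center function $c_i$ and every functional datum used to describe $f$ on the cell is rewritten as an $\cL^*$-term in $(a,x)$. Concatenating these along all cells gives a single tuple of $\cL^*$-terms $h$ with $h(\lambda(x))=f(x)$ on $X$.

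The main obstacle, and the only nonformal point, is showing that the center functions produced by the cell decomposition can indeed be realized by the symbols $(\cdot,\cdot)_e^{1/m}$ and $h_{m,e}$ together with ring operations, rather than by some wider class of henselian-definable functions. This is standard but uses in an essential way the fact that $L$ is henselian of characteristic zero (so Hensel's lemma provides unique roots once an angular component witness of sufficient depth $e^2$ is fixed) and that the language $\cL$ is either $\LPas^L$ or $\Lan^L$, for which quantifier elimination down to the value group and residue ring sorts is available in the enriched language $\cL^*$; see \cite{DvdD}, \cite{CLR} and \cite{CLip}. Once this is granted, the inductive term-construction above goes through, yielding the required $(\lambda,h)$.
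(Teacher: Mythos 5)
Your proposal runs into a circularity problem. You begin by applying Theorem~\ref{cda} (globally analytic cell decomposition) to the graph of $f$. But in the paper, the proof of Theorem~\ref{cda} itself invokes Proposition~\ref{terms} in an essential way: after reducing to the classical cell decomposition, it uses Proposition~\ref{terms} to represent $f$ by a tuple of $\cL^*$-terms, and then proceeds by induction on the complexity of those terms. Since Theorem~\ref{cda} is downstream of Proposition~\ref{terms}, you cannot use it as the starting point here. (You could try to replace Theorem~\ref{cda} with the ``classical'' cell decomposition that follows from tameness/compactness or from Theorem~7.4 of \cite{CLR}, but that only gives the cell structure, not the term description, so the real work would still remain.)

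Beyond the circularity, your second step --- ``the center functions produced by the cell decomposition can be realized by $(\cdot,\cdot)_e^{1/m}$, $h_{m,e}$ and ring operations; this is standard'' --- is essentially the entire content of the proposition, and you wave it off to the literature without isolating the specific result. That nonformal step is exactly what \cite{CLR}, Theorem 7.5 already provides. The paper's proof is in fact just a one-paragraph citation of that theorem: in \cite{CLR} the parametrization runs over a Cartesian product of a residue-ring power with the value group and the symbol $(\cdot,\cdot)_e^{1/m}$ takes an extra value-group input, and the only adaptation needed in the present setting is to note that the value group is $\ZZ$, so that extra variable can be absorbed. If you want a self-contained argument, you would effectively be re-proving \cite{CLR}, Theorem 7.5; otherwise, citing it directly and checking the $\ZZ$-valued-group adaptation is the correct and much shorter route.
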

\begin{proof}
By \cite{CLR}, Theorem 7.5, the proposition holds in a slightly different setting. Namely in \cite{CLR}, an extra value group variable is allowed as input in the function $(\cdot,\cdot)_e^{1/m}$ and in parametrizations  $\lambda$ runs over a Cartesian product of $L_N^N$ with the value group. Since in the present case the value group is simply $\ZZ$, the proposition as stated follows directly from \cite{CLR}, Theorem 7.5.
\end{proof}

\begin{proof}[Proof of Theorem \ref{cda}]
We proceed by induction on $n$. For $n=0$, there is nothing to prove. We will use the classical form of cell decomposition without global analyticity, which follows immediately from tameness and compactness, or alternatively, by Theorem 7.4 of \cite{CLR}. Suppose now that $n\geq 1$.
Let $\cL^*$ be the expansion of $\cL$ given by Definition \ref{hf1}. 
By Theorem \ref{terms}, we may suppose that $f$ is given by a tuple of $\cL^*$-terms $t_j$. We may focus on $t_1$ among the $t_j$.
We proceed now by induction on the complexity of the term $t_1$. Suppose that $t_1$ equals $h(v_{1},\ldots,v_{m})$ for some $L$-valued terms $v_{i}$ and a function symbol $h$ of $\cL^*$. By the classical form of cell decomposition and both ongoing inductions, we may assume that $X$ is already a globally analytic cell over $Y$, that the $v_{i,y}^{(0)}$ are globally analytic for each $y\in Y$, and that, for a chosen $M>0$, $|v^{(0)}_{i,y}|$ and $\ac_M(v^{(0)}_{i,y})$ are constant on each box contained in $X^{(0)}_y$.
Now, by choosing $M$ appropriately depending on $h$ (as explained in the proof of Lemma 6.3.15 of \cite{CLip} for each possibility for $h$),  the theorem follows.
\end{proof}

The following elementary lemma about compositions will often be used without mentioning.

\begin{lem}\label{prod} Let $n,m,r$ be integers.
Let $f: U \to V$  and $g: V  \to \cO_L$ be locally analytic functions on some open subsets $U\subset \cO_L^n$ and $V \subset \cO_L^m$.
Assume that
$f$ and $g$ satisfy $T_r$. 
Then the composition $g\circ f $ satisfies
$T_r$. 
\end{lem}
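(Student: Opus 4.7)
The plan is to verify the three ingredients of $T_r$ for the composition $g\circ f$: openness of the domain (trivially inherited from $U$), the $C^r$-norm bound, and the Taylor approximation. Local analyticity of $g\circ f$ is automatic from the local analyticity of $f$ and of $g$ together with the chain rule, so this is not an issue.

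For the $C^r$-norm bound, I would invoke the ultrametric incarnation of Faà di Bruno's formula, which is legitimate since $L$ has characteristic zero. For $|\alpha|\le r$, the quantity $\frac{1}{\alpha!}\partial^{\alpha}(g\circ f)(y)$ can be written as a finite sum, indexed by set partitions, of products of terms of the form $\frac{1}{\beta!}\partial^{\beta} g(f(y))$ and $\frac{1}{\gamma_j!}\partial^{\gamma_j} f(y)$ with integer combinatorial weights. By the $T_r$ hypotheses on $f$ and on $g$, every such factor has norm at most $1$, the combinatorial integers likewise have norm $\le 1$, and the ultrametric inequality delivers the desired bound on the $C^r$-norm of $g\circ f$.

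For the Taylor approximation part, set $P(x):=T^{<r}_{y,f}(x)$ and $Q(u):=T^{<r}_{f(y),g}(u)$, and split the error as
\[
g(f(x)) - T^{<r}_{y,g\circ f}(x) \;=\; [\,g(f(x))-Q(f(x))\,] \;+\; [\,Q(f(x))-Q(P(x))\,] \;+\; [\,Q(P(x))-T^{<r}_{y,g\circ f}(x)\,].
\]
The first bracket is controlled by $T_r$ applied to $g$ at $f(y)$: its norm is at most $|f(x)-f(y)|^{r}\le |x-y|^{r}$, using that $f$ is $1$-Lipschitz, which is immediate from its $C^1$-norm being $\le 1$. For the second bracket, I observe that $Q$ is a polynomial of degree $<r$ whose Taylor coefficients around $f(y)$ have norm $\le 1$, so $Q$ is $1$-Lipschitz on $\cO_L^m$; together with $T_r$ for $f$ this yields $|Q(f(x))-Q(P(x))|\le |f(x)-P(x)|\le |x-y|^{r}$. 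For the third bracket, viewed as polynomials in $(x-y)$, the difference $Q(P(x))-T^{<r}_{y,g\circ f}(x)$ consists solely of monomials of degree $\ge r$, whose coefficients are integer-polynomial expressions in the bounded Taylor coefficients of $P$ and $Q$, hence of norm $\le 1$; since $|x-y|\le 1$, each such monomial has norm $\le |x-y|^{r}$. The ultrametric inequality then concludes.

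The one non-trivial algebraic input is the identity $T^{<r}_{y,g\circ f}(x) \equiv Q(P(x)) \pmod{(x-y)^{r}}$, i.e.\ the chain rule at the level of $(r-1)$-jets; this is a purely formal identity in characteristic zero, equivalent to the bookkeeping behind Faà di Bruno. I expect this and the careful accounting of factorials in the $C^r$-norm bound to be the only points requiring attention; once they are in place, the rest of the proof is ultrametric bookkeeping.
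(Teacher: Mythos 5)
Your three-bracket decomposition of the error, the Fa\`a di Bruno bound on the $C^r$-norm, and the chain rule at the level of $(r-1)$-jets spell out precisely the ``classical ultrametric calculation'' the paper invokes, so you are on the same track as the paper's (terse) proof. One justification is wrong, however, and it touches the central theme of the paper. In bounding the first bracket you write that $f$ is $1$-Lipschitz ``which is immediate from its $C^1$-norm being $\le 1$.'' This implication is false in the non-archimedean setting: for instance $f(x)=p^{\ord(x)-1}$ on $\{x\in p\ZZ_p : \ac(x)=1\}$ is definable and locally constant, hence has $C^r$-norm at most $1$ for every $r$, and yet $|f(p^n)-f(p^{n+1})| = p\,|p^n-p^{n+1}|$, so $f$ is not $1$-Lipschitz. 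Without convexity and the mean value theorem, a pointwise bound on derivatives never propagates to a global Lipschitz bound across a totally disconnected domain; this failure is exactly what motivates the condition $T_r$ of Definition \ref{Tr} and Theorems \ref{Lip} and \ref{Lipmixed}. The claim you need is nonetheless true, but for the right reason: since $f$ satisfies $T_r$ it also satisfies $T_1$ (as noted just after Definition \ref{Tr}), and $T_1$ for a $C^1$-map into $\cO_L^m$ is precisely the $1$-Lipschitz condition. With that single justification replaced, your argument is complete.
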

\begin{proof}
Just use that the Taylor polynomial of a composition corresponds to the composition of the Taylor polynomials (of respective degrees and up to a certain degree),
 and use a classical ultrametric calculation.
\end{proof}

The composition with power maps has been used in the context of real parametrizations by Yomdin, Gromov, Pila and Wilkie. We will use power maps similarly and introduce the following notation for convenience.

\begin{defn}\label{fNb}
For $f: A \subset \cO_L^m \to L^n$ a definable function, for any integer $N>0$ and any $b\in\cO_L^m$, write $A_{\star N,b}$ for the set of all $x\in\cO_L^m$ such that $bx^N:=(b_ix_i^N)_{i\in \{1,\cdots, n\}}$ lies in $A$, and write $f_{\star N,b}:A_{\star N,b}\to L^n$ for the function $x\mapsto  f(b x^N)$.
\end{defn}

Let us recall how convergent power series over $\cO_L$ may also be interpreted in other valued fields, even when they are non-complete or of higher rank. For $m\geq 0$, put $A_m=
\cO_L
\{ x_1,\ldots,x_m \}$, namely the ring of formal power series in $x$ over $\cO_L$ and converging on $\cO_L^m$. Write $\cF(X, Y)$ for the ring of $Y$-valued functions on $X$ for any sets $X,Y$.
Let $L'$ be a valued field with valuation ring $\cO_{L'}$ and maximal ideal $\cM_{L'}$. An analytic $\{A_m\}_m$--structure on $L'$ is the data of 
ring homomorphisms
$$
\sigma_m : A_m \to \cF(\cO_{L'}^m, \cO_{L'}),
$$
for all $m\geq 0$,
satisfying
 \begin{itemize}
 \item[(1)]\label{1)} $\sigma_0(\cM_L) \subset \cM_{L'}$,
 \item[(2)]\label{2)} $\sigma_{m}(x_i)=$ the $i$-th coordinate function
on $\cO_{L'}^m$ for  $i=1,\dots,m$,  and
 \item[(3)]\label{3)} $\sigma_{m+1}$ extends $\sigma_{m}$  with the natural inclusions $A_m\hookrightarrow A_{m+1}$ and $\cO_{L'}^m \hookrightarrow  \cO_{L'}^{m+1}:\xi\mapsto (\xi,0)$ inducing $\cF(\cO_{L'}^m, \cO_{L'}) \hookrightarrow \cF(\cO_{L'}^{m+1}, \cO_{L'})$.
\end{itemize}

We also consider one-sorted variants $\cL_1$ and $\cL_1^h$ of $\cL$.
\begin{defn} \label{hfL1}
For $K$ a henselian field, let $h_n : K^{n+1} \to K$ be the function that associates to $(a_0, \cdots , a_{n}, b) \in \cO_K$ the unique zero, $c$, of the polynomial $p(x) := a_n x^n + a_{n-1}x^{n-1} + \cdots + a_0$ that satisfies $|c-b| < 1$, if $|p(b)| < 1$ and $|p'(b)| =1$, and let $h_n$ output $0$ in all other cases. Corresponding to the choice of $\cL$ as either $\LPas^L$ or $\Lan^L$, let $\cL_1$ be the valued field language $(\cdot,^{-1},+,-,0,1,\mid )$ with coefficients from $L$, resp., the valued field language  together with function symbols for each element of $A_m$ for all $m\geq 0$. Let $\cL_1^h$ be $\cL_1$ together with the function symbols $h_n$ for all $n\geq 0$.
\end{defn}

\begin{thm}[\cite{CLip2}, Theorem 3.4.2]\label{qeL1h}
Let $L^{\rm alg}$  be an algebraically closed valued field with analytic $\{A_m\}_m$--structure. Then $L^{\rm alg}$ admits quantifier elimination in (the one-sorted language) $\cL_1^h$.
\end{thm}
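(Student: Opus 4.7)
The strategy is the standard model-theoretic criterion for quantifier elimination: show that for any two sufficiently saturated models $M_1, M_2$ of the theory of $L^{\rm alg}$, any $\cL_1^h$-substructure $A$ common to both, and any single element $b \in M_1$, the partial embedding $A \hookrightarrow M_2$ extends to include $b$. Equivalently, the quantifier-free type of $b$ over $A$ should determine its full type. First I would set up $A$, $b$ and exploit saturation in the usual way.

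Without any analytic structure the result reduces to Robinson's classical quantifier elimination for algebraically closed valued fields in $\{+,-,\cdot,^{-1},\mid\}$, so the real content lies in parsing the atomic formulas built from analytic function symbols coming from the $A_m$. The central technical tool is a Weierstrass preparation and division package for the rings $A_m$, which is part of the standing framework for analytic $\{A_m\}$-structures (compare Section 4 of \cite{CLip}): any $f \in A_m$ that is suitably regular in one variable (possibly after a linear change of coordinates) factors as $u \cdot P$, with $u \in A_m^\times$ and $P$ a distinguished polynomial in the chosen variable. Iterated application, together with the compatibility condition (3) on the $\sigma_m$, lets one rewrite any atomic formula of the form $\sigma_m(f)(t_1(b),\ldots,t_m(b)) = 0$ or a divisibility atomic between two such analytic terms as a boolean combination of (i) polynomial equations in $b$ with coefficients in $A$, (ii) divisibility relations in $b$ with coefficients in $A$, and (iii) assertions that certain $A_m$-terms in parameters from $A$ alone are units, which are already quantifier-free.

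Once atomic formulas have been reduced to polynomial and divisibility data over $A$, the remainder of the argument is purely algebraic and follows Robinson. The quantifier-free type of $b$ over $A$ in the ACVF language is determined by the cut it makes in the valuation ordering over $A$ together with any algebraic relation it satisfies over $A$. Because $L^{\rm alg}$ is algebraically closed, every polynomial over $A$ splits, and the function symbols $h_n$ explicitly name particular roots via a henselian zero near an approximate root, so any algebraic element over $A$ is already an $\cL_1^h(A)$-term without quantifiers. The transcendental case is handled by invoking saturation of $M_2$ to produce an element realizing the same cut, as in the classical ACVF proof.

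The main obstacle lies in the Weierstrass reduction itself: one must show that after preparing $\sigma_m(f)$ as (unit)$\cdot$(distinguished polynomial in one variable), the coefficients are again represented by elements of smaller $A_{m-1}$, and the unit factor is invertible inside the analytic $\{A_m\}$-structure, so that the rewriting lands inside $\cL_1^h$ rather than escaping to a larger class of functions. Ensuring this compatibility across all $m$ is exactly why the hypothesis of a genuine analytic $\{A_m\}$-structure, as opposed to an arbitrary collection of analytic functions, is indispensable. The condition $\sigma_0(\cM_L) \subset \cM_{L'}$ guarantees that the relevant regularity assumptions in the Weierstrass theorem can be detected by $\ac$-type data on $L^{\rm alg}$, and the algebraic closure of $L^{\rm alg}$ means no obstruction arises from polynomial factors failing to split.
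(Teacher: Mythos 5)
The paper does not prove this result; it is quoted verbatim as Theorem~3.4.2 of \cite{CLip2}, so there is no ``in-paper'' proof to compare against. Your high-level outline—Shoenfield-style substructure test, reduction of analytic atomics via Weierstrass preparation/division, then fall back on Robinson's quantifier elimination for algebraically closed valued fields—is indeed the standard template in the Denef--van den Dries--Lipshitz--Robinson tradition, and it is the genre of argument that \cite{CLip} and \cite{CLip2} belong to.

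That said, the paragraph you flag as ``the main obstacle'' is in fact the entire content of the theorem, and as written your sketch does not close it. Strictly convergent power series (the rings $A_m=\cO_L\{x_1,\dots,x_m\}$ used here) do \emph{not} admit a robust Weierstrass preparation after arbitrary substitution of parameters from an extension: once you plug in elements of $\cO_{L'}$ for some of the variables, the resulting series may have all nonzero coefficients of norm $<1$ and no coherent way to factor out a scalar and land back inside the same system, and iterating the rewriting can force you outside the class of $A_{m}$-terms. This is precisely the failure mode that motivated the two-sorted \emph{separated} analytic structures of \cite{CLip} (with both $\cO$- and $\cM$-variables), where a genuine Weierstrass division package is available. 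The proof in \cite{CLip2} does not run your direct argument; it establishes that a strictly convergent Weierstrass system sits compatibly inside a separated one, and then imports the quantifier elimination for separated analytic structures from \cite{CLip}. Your appeal to ``Weierstrass preparation and division \dots part of the standing framework'' thus assumes exactly what needs to be proved in the strictly convergent setting, and the role of $h_n$ and of algebraic closedness (divisible value group, algebraically closed residue field) is to make the \emph{separated} quantifier elimination specialize cleanly to the one-sorted language $\cL_1^h$, not to substitute for the preparation machinery. Without the pass-to-separated step (or an equivalent closure property of the strictly convergent system under the needed ring operations), the reduction of analytic atomics to polynomial and divisibility data does not go through.
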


The following lemma gives uniform bounds on Gauss-norms and is based on Lemma 6.3.9 of \cite{CLip}. Recall that the Gauss-norm of a power series is the supremum of the norms of the coefficients. Write $\cM_{L^{\rm alg}}$ for the maximal ideal of the valuation ring $\cO_{L^{\rm alg}}$ of $L^{\rm alg}$.


\begin{lem}\label{gauss}
Let $f_y:\cM_L^m \to L$ be a definable family of functions for $y$ in a definable set $Y$. Suppose that for each $y\in Y$, $f_y$ is given by a power series with coefficients in $L$ which converges on the associated set $B_{\rm as}:=(\cM_{L^{\rm alg}})^m$. Suppose further for each $y\in Y$ and each $i=1,\ldots,m$ that the partial derivatives $\partial f_y/\partial x_i$ have norm at most $1$ on $B_{\rm as}$.
Then there is a nonzero integer $M$ such that the Gauss-norm of $M(f_y-f_y(0))$ is at most one for each $y\in Y$.
\end{lem}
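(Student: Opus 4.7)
The plan is to extract coefficientwise bounds on the $a_{\alpha,y}$ from the derivative hypothesis, and then use definability together with the analytic $A_m$-structure to achieve uniformity across the family.

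First, I would expand each $f_y$ as a power series $f_y(x) = \sum_\alpha a_{\alpha,y}\, x^\alpha$ with $a_{\alpha,y}\in L$. Its partial derivative is $\partial f_y/\partial x_i = \sum_{\alpha,\,\alpha_i>0} \alpha_i\, a_{\alpha,y}\, x^{\alpha-e_i}$. Since $L^{\rm alg}$ is algebraically closed with dense value group, for every $r \in |L^{\rm alg}|$ with $r<1$ the supremum of $|\partial f_y/\partial x_i|$ on the closed polydisk of radius $r$ in $L^{\rm alg}$ equals the Gauss norm $\max_{\alpha,\,\alpha_i>0} |\alpha_i\, a_{\alpha,y}|\, r^{|\alpha|-1}$. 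The hypothesis forces this to be $\leq 1$ for every such $r$; letting $r\to 1^-$ yields the key estimate $|\alpha_i\, a_{\alpha,y}|\leq 1$ for all $y\in Y$, all $\alpha\in\NN^m$ and all $i$ with $\alpha_i>0$.

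Next, I would combine this with Theorem \ref{qeL1h} and Proposition \ref{terms} in order to write, after a definable partition of $Y$, the family $f_y$ in a uniform term representation: a fixed tuple of $\cL^*$-terms in $(y,x)$ built from only finitely many restricted analytic functions in $\bigcup_m A_m$, finitely many divisions, and finitely many applications of Hensel root and $n$-th root functions with uniformly bounded $n$. Since the $A_m$-functions have coefficients in $\cO_L$ by definition, this recipe can only introduce a uniformly bounded denominator $N_0$ in the coefficients $a_{\alpha,y}$; combined with the estimate of the previous paragraph, choosing $M$ to be a suitable multiple of $N_0$ (depending also on the bounded ramification indices appearing in the root extractions) yields $|M\, a_{\alpha,y}|\leq 1$ for all $\alpha\neq 0$ and $y\in Y$, which is the required Gauss-norm bound on $M(f_y - f_y(0))$.

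The hard part will be the mixed characteristic case, in which the naive bound $|a_{\alpha,y}|\leq 1/|\alpha_i|$ can deteriorate arbitrarily along $\alpha_i = p_L^k$, so uniformity in $\alpha$ does not follow from the derivative hypothesis alone. It is precisely the uniform term representation provided by the analytic quantifier elimination, together with the integrality of the coefficients of the $A_m$-functions, that forces the hidden denominators of the family to remain uniformly bounded; without definability, the derivative bound and convergence on $\cM_{L^{\rm alg}}^m$ would not by themselves suffice to produce such an $M$.
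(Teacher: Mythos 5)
Your first two paragraphs are on solid ground. The maximum-modulus identity on closed polydisks of radius $r < 1$ in $L^{\mathrm{alg}}$, together with the density of the value group of $L^{\mathrm{alg}}$ and the limit $r \to 1^-$, does yield $\lvert \alpha_i\, a_{\alpha,y}\rvert \le 1$ whenever $\alpha_i > 0$, and in the equicharacteristic zero case this already gives $M = 1$, which matches the remark in the paper that that case is easier. You are also right that in mixed characteristic the bound $\lvert a_{\alpha,y}\rvert \le 1 / \lvert p_L^{v_{p_L}(\alpha_i)}\rvert$ deteriorates along $p_L$-power multi-indices and that no amount of squeezing the derivative hypothesis alone will fix this: the non-definable power series $\sum_{k\ge 0} p^{-k} x^{p^k}$ on $\cM_{\QQ_p^{\mathrm{alg}}}$ has derivative of norm $\le 1$ yet unbounded Gauss norm, so definability must enter.

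The gap is in your third paragraph, which is where the actual content of the lemma lives. You assert that passing to $\cL^*$-terms via Proposition \ref{terms} and the quantifier elimination of Theorem \ref{qeL1h}, together with the integrality of coefficients of the $A_m$-functions, ``forces the hidden denominators of the family to remain uniformly bounded.'' This is precisely the statement to be proved, not an observation that follows from the term structure. An $\cL^*$-term is a composition of $A_m$-functions, field operations (including division), and the partially-defined symbols $(\cdot,\cdot)_e^{1/m}$ and $h_{m,e}$; the power series expansion at a generic center in $\cM_L^m$ of such a composition is not manifestly a series with uniformly $p_L$-integral (up to one fixed factor) coefficients. Divisions by sub-terms whose norms vary with the parameter $y$, and root/Hensel-lift symbols whose local Taylor expansions genuinely involve $1/(p_L\cdot\text{unit})$ factors, can introduce denominators that vary with $y$. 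To control them you would need to feed the derivative hypothesis back into a case-by-case analysis of the term tree, and you have not done this; the phrase ``combined with the estimate of the previous paragraph'' does no work, since as you yourself point out that estimate is useless exactly on the problematic multi-indices.

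For comparison, the paper's proof takes a different, more semantic route: it uses logical compactness to reduce the family statement to a single function $f$ in the separated Weierstrass ring $A_{0,m}$ of an analytic structure over an algebraically closed valued field $L'$, and then invokes a structural fact about such separated power series (the first part of the proof of Lemma 6.3.9 of \cite{CLip}, the non-infinitesimality of a certain quantity $c$) for $m=1$, extending to general $m$ by Weierstrass preparation (Property (v) of Definition 4.1.2 of \cite{CLip}) and induction. In other words, the paper stays inside the analytic category and leans on the finiteness/integrality built into the Weierstrass system, rather than translating to a syntactic term representation. Your approach would become a genuinely different proof if you actually carried out the analysis of how denominators propagate through $\cL^*$-terms under the derivative hypothesis, but as it stands that is an unproved claim sitting exactly where the lemma's difficulty is.
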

\begin{proof}
The proof is somewhat easier in the equicharacteristic zero case, but we will give a uniform treatment.
Our lemma is implied by compactness by the following more general and abstract result.
Let $L'$ be an algebraically closed valued field with an analytic structure in the sense of Definition 4.1.6 of \cite{CLip} and let $f:\cM_{L'}^m \to {L'}$ be given by a power series in $A_{0,m}$ of the separated Weierstrass system of the analytic structure. Suppose for each $i=1,\ldots,m$ that $\partial f/\partial x_i$ has norm at most $1$ on $\cM_{L'}$.
Then there is a nonzero integer $M$ such that the Gauss-norm of $M(f-f(0))$ is at most one. 
When $m=1$ this follows from the first part of the proof of Lemma 6.3.9 of \cite{CLip} (showing that $c$ is not infinitesimal).
The statement for general $m$ follows from induction on $m$, Property (v) of Definition 4.1.2 of \cite{CLip}, and compactness.
\end{proof}

The following corollary expresses that, for power series in our setting, bounded $C^1$-norm almost implies $T_1$.
\begin{cor}\label{cor:gauss}
Let $f_y:B_y\subset \cM_L^m \to L$ be a definable family of functions for $y$ varying in a definable set $Y$. Suppose that for each $y\in Y$, $B_y$ is a box and that $f_y$ is given by a power series with coefficients in $L$ which converges on the associated set $B_{y, {\rm as}}$. Suppose further that, for each $y\in Y$, the function $f_y$ has $C^1$-norm at most $1$ on $B_{\rm as}$.
Then there is a nonzero integer $M$ such that, for each $y\in Y$, the function $M f_y$ satisfies $T_1$ on $B_{y, {\rm as}}$.
\end{cor}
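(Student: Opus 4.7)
The plan is to reduce Corollary \ref{cor:gauss} to Lemma \ref{gauss} via a definable rescaling that identifies each box $B_y$ with the standard polydisk $\cM_L^m$. Since $B_y$ is a box contained in $\cM_L^m$ and $L$ is discretely valued, the defining radii $r_{i,y}$ of the factors of $B_y = \prod_i \{x \in L : |x_i - c_{i,y}| \leq |r_{i,y}|\}$ (chosen definably in $y$) satisfy $|r_{i,y}| \leq |\varpi_L|$. Consequently the affine rescaling $\phi_y(u) := c_y + \varpi_L^{-1} r_y u$ is well-defined as a map $\cM_L^m \to B_y$ and extends to a bijection $(\cM_{L^{\rm alg}})^m \to B_{y,\mathrm{as}}$ identifying the open unit polydisk with $B_{y,\mathrm{as}}$.

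Setting $\tilde f_y := f_y \circ \phi_y$ produces a definable family of functions on $\cM_L^m$, given by power series over $L$ converging on $(\cM_{L^{\rm alg}})^m$. The chain rule combined with the hypothesis $|\partial f_y/\partial x_i| \leq 1$ and the inequality $|\varpi_L^{-1} r_{i,y}| \leq 1$ yields $|\partial \tilde f_y/\partial u_i| \leq 1$ on $(\cM_{L^{\rm alg}})^m$, so the hypotheses of Lemma \ref{gauss} apply to the family $\tilde f_y$. That lemma then produces a nonzero integer $M$, uniform in $y \in Y$, such that the Gauss-norm of $M(\tilde f_y - \tilde f_y(0))$ is at most $1$.

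It remains to translate this Gauss-norm bound back through $\phi_y^{-1}$ to obtain $T_1$ for $Mf_y$ on $B_{y,\mathrm{as}}$. The $C^r$-norm condition is immediate since $|M| \leq 1$ for any nonzero integer $M$ and the $C^1$-norm of $f_y$ is at most $1$. For the Lipschitz clause of $T_1$, writing $M\tilde f_y - M\tilde f_y(0) = \sum_{|\alpha|\geq 1} c_{\alpha,y} u^\alpha$ with $|c_{\alpha,y}| \leq 1$ and substituting $u = \varpi_L r_y^{-1}(x - c_y)$, one combines the standard telescoping identity for $(x-c_y)^\alpha - (z-c_y)^\alpha$ with the refined coefficient bounds coming from the full $C^1$-norm hypothesis, which give $|\alpha_i c_{\alpha,y}| \leq 1$ rather than merely $|c_{\alpha,y}| \leq 1$, to deduce $|Mf_y(x) - Mf_y(z)| \leq |x - z|$ for $x, z \in B_{y,\mathrm{as}}$. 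The hard part will be the bookkeeping in this last step: the naive estimate from $|c_{\alpha,y}| \leq 1$ alone yields a Lipschitz constant of $\max_i |\varpi_L/r_{i,y}|$, which may exceed $1$, so the factor $|\alpha_i|$ in the refined coefficient bound must compensate exactly the scaling factor $|\varpi_L/r_{i,y}|$ introduced by $\phi_y^{-1}$, with $M$ absorbing any residual $p_L$-adic denominators arising from $\alpha_i \in \NN$ via Lemma \ref{gauss}.
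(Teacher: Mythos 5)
Your overall plan (rescale the box to the standard polydisk, apply Lemma~\ref{gauss}, translate the Gauss-norm bound back) is the right template, but the specific rescaling you chose loses exactly the factor that the whole argument hinges on, and your proposed salvage goes in the wrong direction $p$-adically.

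Concretely, take $m=1$ and write $a=\varpi_L^{-1}r_y$, $f(x)=\sum_j b_j x^j$ (after translating $c_y$ to $0$). Your $\tilde f(u)=f(au)=\sum_j b_j a^j u^j$ has $\tilde f'(u)=af'(au)$, so Lemma~\ref{gauss} applied to $\tilde f$ yields $|M b_j a^{\,j}|\le 1$. But the Lipschitz estimate for $Mf$ on $B_{\rm as}$ requires $|M b_j a^{\,j-1}|\le 1$: writing $Mf(x)-Mf(z)=\sum_j Mb_j(x-z)(x^{j-1}+\cdots+z^{j-1})$ with $|x|,|z|<|a|$ produces the factor $|a|^{j-1}$, not $|a|^{j}$. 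So you are off by $|a|^{-1}=|\varpi_L/r_y|$, as you correctly anticipate. The paper fixes this by working with $g(u)=f(au)/a$ rather than $f(au)$: then $g'(u)=f'(au)$ is genuinely bounded by $1$ (not merely by $|a|$), and Lemma~\ref{gauss} applied to $g$ gives precisely $|Mb_j a^{\,j-1}|\le 1$. That division by $a$ is not cosmetic; it is the step that aligns the normalization of Lemma~\ref{gauss} with what the final Lipschitz estimate needs.

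Your proposed repair, the ``refined bound'' $|\alpha_i c_{\alpha,y}|\le 1$, does not close the gap; in fact it is \emph{weaker} than $|c_{\alpha,y}|\le 1$, since $\alpha_i$ is a nonnegative integer and hence $|\alpha_i|\le 1$ in $L$. What the derivative bound $\bigl|\partial\tilde f_y/\partial u_i\bigr|\le|a_i|$ actually gives, via Gauss-norm $=$ sup-norm, is $|\alpha_i\, c_{\alpha,y}/a_i|\le 1$, i.e.\ $|b_\alpha a^{\alpha-e_i}|\le |\alpha_i|^{-1}$; and $|\alpha_i|^{-1}$ is unbounded over $\alpha$ (it blows up along powers of $p_L$), so no single integer $M$ can absorb it. The whole content of Lemma~\ref{gauss} is that, for functions given by a separated Weierstrass system, this loss can be absorbed by one uniform $M$ — but only if the lemma is applied to a function whose derivative is normalized to be bounded by $1$, which is $g$, not $\tilde f$. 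Finally, the paper also first reduces to $m=1$ by coordinate-by-coordinate ultrametric chaining; doing the multi-variable case directly as you sketch forces you to choose which coordinate's radius to divide out, and the resulting function then need not have all its partials bounded by $1$, so Lemma~\ref{gauss}'s hypothesis may fail. Reducing to one variable sidesteps this.
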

\begin{proof}
Let us first consider the situation for fixed $y$, so we may  write $f$ instead of $f_y$, and so on. Since $B$ is a Cartesian product, to prove this case we may furthermore assume $m=1$.

Up to a translation, we may suppose there exists a bijection $i: \cM_L \to B:x\mapsto ax$ for some nonzero $a\in \cO_L$, and that $f(0)=0$.
The function
$$
g: \begin{cases} \cM_L \longrightarrow  \cO_L  \\ x \longmapsto \frac{f(ax)}{a}\end{cases}
$$
is globally analytic and has $C^1$-norm at most $1$ on $\cM_{L^{\rm alg}}$, by the chain rule for differentiation.
By Lemma \ref{gauss}, the Gauss-norm of $Mg$ is at most $1$ for some $M>0$.  Let us write $\sum a_i x ^i$ for the series $Mf(x)$, and thus $\sum a_i a^{i-1}z^i$ for the series $Mg(z)$. By the bound on the Gauss-norm of $Mg$, one has
$$
|a_ia^{i-1}|\leq 1.
$$
For $x,y\in B_{\rm as}$ 
let us write $x=av$ and $y=aw$ for $v,w$ in $\cM_{L^{\rm alg}}$, and
$$
|Mf(x) - Mf(y)| = | a_1(x-y) + a_2a^2(v^2-w^2) + a_3a^3(v^3-w^3) + \ldots   |.
$$
Rewriting $v^i-w^i$ by $(v-w)(v^{i-1}+\ldots + w^{i-1})$, one thus finds
 $$
|Mf(x) - Mf(y)| \leq \max_i |a_i a^{i-1} a(v-w)| \leq |a(v-w)| = |x-y|.
$$
This proves the statement for fixed $y$. The general case follows from the case $m=1$ and the uniformity in $M$ given by Lemma \ref{gauss}.
\end{proof}

\begin{proof}[Proof of Theorems \ref{GYT}, \ref{GYTs} and Proposition \ref{gyts} for $r=1$]
We proceed by induction on $m$, the case of $m=0$ being trivial by (mixed) tameness, (see Section \ref{ex-tame}).
The proof will combine Corollary \ref{cor:invers}, Theorems \ref{Lip}, \ref{Lipmixed} and \ref{cda}, and will require going to an algebraic closure $L^{\rm alg}$ of $L$ to control the $C^1$-norms on $L^{\rm alg}$.

By using the two constants $0$ and $1$ in $k_L$ or in $L_N$ to realize disjoint unions, it is clear that we may proceed by working piecewise on $X$. Also, by induction on the dimension of $X$ and the dimension theory as in \cite{CLb}, we may replace $X$ by a definable subset whose complement in $X$ has dimension less than $m$.

We may therefore assume that we have a definable bijection
$$
h_0:Q_0  \to X,
$$
where $Q_0 \subset Y\times L_N^N\times \cO_L^m $ and the $Q_{0,y,a}$ are open in $\cO_L^m$ for each $y\in Y$ and $a\in L_N^N$, and such that
the $h_{0,y,a}$ are $C^1$ for each $y$ and $a$. Indeed, such a bijection can be found by (a basic form of) Theorem \ref{cda}.
By Theorem  \ref{terms}, we may suppose that the components of $h_0$ are given by $\cL^*$-terms $t_1,\ldots,t_n$, with notation from Definition \ref{hf1}.

Let $L^{\rm alg}$ be an algebraic closure of $L$ with norm extending the one on $L$. We now explain how to switch between $L$ and $L^{\rm alg}$, to improve $h_0$. This passage to the algebraic closure will preserve the necessary information by the term structure given by Proposition \ref{terms} for $L$ and by Theorem 3.4.3.(i) of \cite{CLip2} for $L^{\rm alg}$, and by the quantifier elimination result stated as Theorem \ref{qeL1h}.

The field $L^{\rm alg}$ has a natural $\cL^*$-structure and $\cL_1^h$-structure by Lemma 3.3.6 and Theorem 3.4.1 of \cite{CLip2} and Theorem 4.5.11 of \cite{CLip}; this $\cL^*$-structure expands the natural $\cL_1^h$-structure on $L^{\rm alg}$. Moreover, every $\cL^*$-term corresponds in $(L^{\rm alg},\cL_1^h)$ naturally to an $\cL_1^h$-term, where the variables from $L^{\rm alg}_N$ are replaced by (new) variables over $L^{\rm alg}$. For example, the $\cL^*$-term $(\cdot,\cdot)_e^{1/m}$ is interpreted by the function
$
(L^{\rm alg})^2 \to L^{\rm alg}
$
which sends $(x,w,a)$ to the (unique) $m$-th root $y$ of
  $x$ with $|y-w| < |ey|$, whenever simultaneously
  $|w^m-x|<|e^2x|$, $|m|\geq |e|$, and to $0$ otherwise, which is given by an $\cL_1^h$-term.

Let us now associate to the $\cL^*$-terms $t_j$ in $L^{\rm alg}$ the corresponding $\cL_1^h$-terms $v_j$ and consider them as $L^{\rm alg}$-valued functions on $(L^{\rm alg})^{S+m}$ for some $S\geq 0$.
We will mimic the proof of Corollary \ref{cor:invers} to make the $C^1$-norm of the $v_j$ small and then go back to $L$.
By Theorem \ref{qeL1h}, there is a finite quantifier free $\cL_1^h$-definable partition of $(L^{\rm alg})^{S+m}$ with pieces $A_s$ such that, up to neglecting lower dimensional parts and possibly permuting coordinates, we may suppose for each piece $A_s$ that $|\partial v_{1} / \partial x_1|$ is maximal among the $|\partial v_{j} / \partial x_i|$ for $j=1,\ldots,n$ and $i=1,\ldots,m$, and that it is either at most one or larger than $1$ on the whole of $A_s$.
In the latter case, we may assume by compactness, Propositions \ref{inj-cons} and \ref{inj-cons-mixed}, and reinterpreting back (as above) into $\cL_1^h$ if necessary, that the functions $v_{1,b,x_2,\ldots,x_m}$ are injective for each $(b,x)\in A_s$, with inverse $(v_{1,b,x_2,\ldots,x_m})^{-1}$ also given by an $\cL_1^h$-term by Theorem 3.4.3.(i) of \cite{CLip2}. Replacing the restriction of $v = (v_j)_j$ to $A_s$ with the function
$$
(b, w , x_2,\ldots,x_m) \mapsto v (b, (v_{1,b,x_2,\ldots,x_m})^{-1}(w) , x_2,\ldots,x_m),
$$
and by the chain rule for differentiation (with a similar calculation as in the proof of Corollay \ref{cor:invers}), it follows that we may suppose that the functions $v_{j,b}$ have $C^1$-norm bounded by one on each $A_{s,b}$.
Since moreover such terms are almost everywhere locally analytic, the $v_{j,b}$ may be assumed to be locally $T_1$.

Interpreting this data back in $L$, we obtain an improved $\cL$-definable function $h:Q\subset  Y\times L_N^N \times \cO_L^m\to \cO_L^n$.
Using cell decomposition for $h$ as  provided by Theorem \ref{cda}, it follows from Theorems \ref{Lip}, resp.~\ref{Lipmixed},  Corollary \ref{cor:gauss}, and by induction on $m$ applied to the graphs of the centers to replace $Q$ by $Q^{(0)}$ as in Definition \ref{cell0}, that one may assume that there is an integer $N'>0$ (with $N'=1$ in the equicharacteristic zero case) such that
$$
N'h:
\begin{cases}Q\subset  Y\times L_N^N \times \cO_L^m\longrightarrow N'X \\
(y,a,x) \longmapsto N'h(y,a,x)\end{cases}
$$
is as desired, but for $N'X$ instead of  $X$, with $N'$ coming from  the use of Theorem \ref{Lipmixed} and Corollary \ref{cor:gauss}.





We are done if $k_L$ is of characteristic zero, since then $N'=1$. Now suppose that $k_L$ is of positive characteristic.
In that case we shall make use of the fact that,   for $x$ close enough to $y$ in $\cO_{L^{\rm alg}}$, one has, with $p=p_L$,
\begin{equation}\label{powerp}
|x^p - y^p| = |(x-y) (x^{p-1}+\ldots+y^{p-1})|\leq |(x-y)p|,
\end{equation}
to gain a factor $|p|$.
Note also that it is always possible to increase the depth of the cell $Q$ over $Y\times L_N^N$, at the cost of  increasing $N$.

We first incease the depth of the cell $Q$ over $Y\times L_N^N$ by a factor which is a power of $p_L$, then replace the $h_{y,a}$ by composing with $M$-th powers for some $M$ which is a power of $p_L$, and restore the condition of having open cells around zero (which is possible since the group of $M$-th powers have finite index in $L^\times$). Using (\ref{powerp}) we observe that all conditions are met for the new $h$, for a large enough choice of the powers of $p_L$, depending only on $N'$.
This finishes the proof.
\end{proof}


\begin{lem}\label{gauss0}
Let $g:B\to L$ be a globally analytic function, where $B$ is the box $a\cM_L$ for some nonzero $a\in\cO_L$.
If for some $\lambda\in L^{\rm alg}$
$$
|g|\leq |\lambda| \mbox{ on $B_{\rm as}$}
$$
then, for all $i>0$,
$$
\Big|\frac{g^{(i)}}{i!}\Big| \leq \frac{|\lambda|} {|a|^i}  \mbox{ on $B_{\rm as}$.}
$$
\end{lem}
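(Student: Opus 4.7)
The plan is to expand $g$ as a power series at the origin (which lies in $B = a\cM_L$) and extract the desired pointwise estimates from bounds on its Taylor coefficients. By global analyticity, $g$ is represented on $B_{\rm as} = \{x \in L^{\rm alg}\mid |x| < |a|\}$ by a power series $g(x) = \sum_{n \geq 0} c_n x^n$ with $c_n \in L$.

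The first step will be to bound the $c_n$. Since $L^{\rm alg}$ is algebraically closed with value group the divisible hull of $\ZZ$, for every real $r$ of the form $|\alpha|$ with $\alpha \in (L^{\rm alg})^\times$ and $r < |a|$, the restriction of $g$ to the closed sub-disc $\{|x|\leq r\}$ has sup-norm equal to its Gauss norm, so $|c_n|\,r^n \leq \sup_{|x|\leq r}|g(x)| \leq |\lambda|$. Letting $r$ tend to $|a|$ through such values gives $|c_n|\leq |\lambda|/|a|^n$ for every $n \geq 0$.

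Next I would differentiate term by term: on $B_{\rm as}$,
$$
\frac{g^{(i)}(x)}{i!} = \sum_{n \geq i} \binom{n}{i} c_n x^{n-i}.
$$
Because the binomial coefficients are integers, $|\binom{n}{i}|\leq 1$, so for each $x \in B_{\rm as}$ the ultrametric inequality combined with the coefficient bound gives
$$
\Bigl|\frac{g^{(i)}(x)}{i!}\Bigr| \leq \max_{n\geq i} |c_n|\,|x|^{n-i} \leq \frac{|\lambda|}{|a|^i}\,\max_{n\geq i}\Bigl(\frac{|x|}{|a|}\Bigr)^{n-i}.
$$
Since $|x|/|a| < 1$, the supremum on the right is attained at $n=i$ and equals $1$, yielding the stated bound.

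The only nontrivial point is the Gauss-norm identity invoked in the first step; this is the standard non-archimedean maximum modulus principle on closed discs over an algebraically closed complete field, and the density of allowable radii ensures that we can pass to $r = |a|$ in the limit. Everything else is a direct ultrametric computation.
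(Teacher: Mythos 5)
Your proof is correct and follows essentially the same route as the paper's: both rest on the identification of the sup-norm of a power series with its Gauss-norm (you recover this by passing to closed sub-discs and taking a limit; the paper first normalizes to $a=\lambda=1$ by the scaling $x\mapsto g(ax)/\lambda$ and then invokes the identity directly on $B_{\rm as}$), followed by the observation that differentiation $i$ times and dividing by $i!$ only multiplies coefficients by integer binomial coefficients, hence does not increase the Gauss-norm. Your version is somewhat more explicit about the coefficient bounds, while the paper's normalization trick is slightly more compact, but the mathematical content is the same.
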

\begin{proof}
First assume $a=1=\lambda$. 
The assumptions imply that the $\sup$-norm of $g$ on $B_{\rm as}$ is at most $1$. Since the $\sup$-norm of $g$ on $B_{\rm as}$ coincides with the Gauss-norm of $g$, it follows that also the latter is at most one. Hence, the Gauss-norm and the $\sup$-norm of $g^{(i)}/i!$ on $B_{\rm as}$ are also at most $1$. In other words, $|g^{(i)}/i!|\leq 1$ on $B_{\rm as}$. The general case follows by applying the case $a=1=\lambda$ to the function $h: x\mapsto g(ax)/\lambda$ on $\cM_L$.
\end{proof}

By Legendre's formula  $|p_L|^i \leq |i!|$ and thus, for any positive integer $n$ divisible by $p_L$, 
\begin{equation}\label{legendre}
\frac{|n|^{r}}{|i!|}\leq 1
\end{equation}
for all integers $i>0$.

We now prove a key lemma allowing to go from $C^1$ to $T_r$.

\begin{lem}\label{gauss1a} Let a positive integer $r$ be given.  
In the equicharacteristic zero case, let $N=r$ and let $n=1$. In the mixed characteristic case, let $n$ be a positive integer sufficiently divisible by $p_L$ and let $N$ be a positive integer divisible by $n^r$.
Let $B$ be the ball $b\cdot(1+n\cM_L)$ for some nonzero $b\in\cO_L$. Let $g:B\to \cO_L$ be a globally analytic function whose $C^1$-norm is at most $1$ on $B_{\rm as}$. Let $h_N$ be the map $L^{\rm alg}\to L^{\rm alg}$ sending $x$ to $x^N$. Then, 
for any ball $B'\subset \cO_{L}$ with $h_N(B')\subset B$, the function $g\circ h_N$ satisfies $T_r$ on $B'_{\rm as}$.
\end{lem}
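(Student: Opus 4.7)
The plan is to analyze the Taylor expansion of $f := g \circ h_N$ at an arbitrary point $x_0 \in B'$ and to bound its tail from degree $r$ onwards by $|u|^r$, where $u = x - x_0$ with $x \in B'_{\rm as}$. The two main inputs are Lemma \ref{gauss0}, controlling the Taylor coefficients of $g$, together with the elementary estimate $|\binom{N}{k}| \leq |N|/|k|$, which follows from $\binom{N}{k} = (N/k)\binom{N-1}{k-1}$ and the integrality of $\binom{N-1}{k-1}$.

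After checking that $h_N(B'_{\rm as}) \subseteq B_{\rm as}$, so that $f$ extends analytically to $B'_{\rm as}$, I would expand
$$
f(x_0 + u) = \sum_{j \geq 0} a_j\, v(u)^j, \qquad v(u) = \sum_{k=1}^N \binom{N}{k} x_0^{N-k} u^k, \qquad a_j = \frac{g^{(j)}(x_0^N)}{j!},
$$
and regroup by powers of $u$ via the multinomial expansion of $v(u)^j$. The sup-norm bounds $|g|, |g'| \leq 1$ on $B_{\rm as}$ together with Lemma \ref{gauss0} applied to $g - g(x_0^N)$ and to $g'$ give $|a_j| \leq |bn|^{-j}$ and the sharper $|a_j| \leq (|j|\,|bn|^{j-1})^{-1}$ for $j \geq 1$. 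Using these, together with $|\binom{N}{k}| \leq |N|/|k|$ and the identity $|x_0| = |b|^{1/N} =: R$ in $L^{\rm alg}$, the coefficient $c_s$ of $u^s$ in the Taylor expansion of $f$ at $x_0$ is bounded by a maximum, taken over $j$ and compositions $k_1 + \cdots + k_j = s$ with $k_i \geq 1$, of quantities of the form $R^{N-s} \cdot (|N|/|j|) \cdot (|N|/|n|)^{j-1}/\prod_k |k|^{m_k}$, where the $(m_k)$ denote the multiplicities of the composition.

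The hypothesis $n^r \mid N$ yields $|N|/|n| \leq |n|^{r-1} \leq 1$, so that $(|N|/|n|)^{j-1}$ is maximized at $j = 1$ (the partition $(s)$), giving $|c_s| \leq R^{N-s}\,|N|/|s|$. On the other hand, the defining inequality $h_N(B') \subseteq B$ forces the outer radius $\rho^* := \sup_{x \in B'_{\rm as}} |x - x_0|$ to satisfy $\rho^* \leq R\,|n|/|N|$, and the combined estimate reads
$$
|c_s|\cdot(\rho^*)^{s-r} \;\leq\; R^{N-r}\cdot |N| \cdot (|n|/|N|)^{s-r}\,/\,|s|,
$$
which collapses to $1$ in equicharacteristic zero (where $n = 1$, $N = r$ and all norms of nonzero integers equal $1$), and in mixed characteristic is bounded by $1$ once $n$ is sufficiently divisible by $p_L$, via equation (\ref{legendre}) applied to the denominator $|s|$ and to the factorial denominators inherent in the multinomial coefficients. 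The ultrametric inequality applied to $\sum_{s \geq r} c_s u^s$ for $|u| < \rho^*$ then yields $|f(x) - T^{<r}_{x_0,f}(x)| \leq |u|^r$, which is $T_r$.

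The main obstacle is the combined arithmetic balancing: the divisibility of $N$ by $n^r$ is exactly what makes $(|N|/|n|)^{j-1}$ dominant at $j = 1$, while Legendre's formula together with the $p_L$-divisibility of $n$ is required to absorb simultaneously the factorial denominators produced by the Faà di Bruno multinomials and the factor $|s|^{-1}$ in the final estimate. A secondary subtlety is the verification that the leading-order estimate $\rho^* \leq R\,|n|/|N|$ is indeed sharp, which a priori involves the binomial constraints from all $k \in \{1,\ldots,N\}$; checking that the linear term $N x_0^{N-1} u$ of $v$ dominates reduces, under the same divisibility hypothesis, to a ratio estimate involving $|\binom{N}{k}|/|N|^k$ that again fits within the arithmetic budget provided by the hypothesis.
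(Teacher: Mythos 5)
Your decomposition of $g\circ h_N$ as a composition of power series — expanding $f(x_0+u)=\sum_j a_j\, v(u)^j$, regrouping by powers of $u$, and tracking the multinomial/composition structure (a Fa\`a di Bruno--type accounting) — is genuinely different from the paper's route. The paper instead uses the chain and product rules to write $g_N^{(i)}(x)$ as a sum of terms $c\, N^\beta x^\alpha g^{(\beta)}(x^N)$ with $c\in\ZZ$ and $\alpha=\beta N-i$; combined with Lemma~\ref{gauss0} applied to $g'$ this yields the single uniform bound
$\bigl|g_N^{(i)}(x)/i!\bigr|\le |n|^r\,|b'|^{N-i}/|i!|$,
because $|N|^\beta/|n|^{\beta-1}=|N|\,(|N|/|n|)^{\beta-1}\le|N|\le|n|^r$ \emph{uniformly in} $\beta$ and $R^{\beta N-i}/(R^N|n|)^{\beta-1}=R^{N-i}|n|^{1-\beta}$. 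The merit of the paper's formulation is precisely that no comparison across ``levels'' $j$ (or $\beta$) is needed; the bound already coalesces all of them.

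Your version has two concrete gaps. First, the outer-radius estimate $\rho^*\le R\,|n|/|N|$ is the wrong inequality: since $n\mid N$ one has $|N|\le|n|\le 1$, so $R|n|/|N|\ge R$, and thus your bound is \emph{trivially} true (any $\rho^*<R$ satisfies it) and too weak to close the argument. Feeding it into your combined estimate gives, for $s>r$, a factor $(|n|/|N|)^{s-r}\ge 1$ which \emph{grows} with $s$; for instance with $p_L=2$, $r=2$, $N=n^2$, $R=1$, $s=4$ and the $j=1$ term the quantity equals $R^{N-2}|N|^{-1}|n|^2/|4|=4R^{N-2}$, which exceeds $1$. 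What the argument really needs (and what the paper invokes as $|nb'|\ge|x-b'|$) is $\rho^*\le R\,|n|$, i.e.\ \emph{without} the $1/|N|$, which then yields $|c_s|(\rho^*)^{s-r}\le |N|\,|n|^{s-r}R^{N-r}/|s|\le |n|^s/|s|\le 1$ by Legendre. Second, the assertion that the maximum over $j$ and compositions is attained at $j=1$ is not justified as stated: for $j\ge 2$ the denominator $\prod_k|k|^{m_k}$ can be small (divisibility by $p_L$ among the $k_i$), and it is not immediate that the decay of $(|N|/|n|)^{j-1}$ wins. This requires an explicit arithmetic verification; the paper's chain-rule bound sidesteps it entirely. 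If you replace the $\rho^*$ bound by $\rho^*\le R|n|$ and carefully verify the $j$-dominance (or, better, adopt the uniform derivative bound), your composition-of-series route can be made to work, but as written it does not close.
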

\begin{proof}
Let us write $g_N$ for $g\circ h_N$ with domain $D$ consisting of $x\in \cO_{L^{\rm alg}}$ such that $h_N(x)$ lies in $B_{\rm as}$.
By the chain rule and the product rule for differentiation, for any $x\in D$ and for $i$ with $0<i$, the derivative $g_N^{(i)}(x)$ is a finite sum of terms of the form $N^\beta x^{\alpha} g^{(\beta)}(x^N)$ for $0<\alpha$ and $0<\beta\leq i\leq \beta N$ and with
\begin{equation}\label{alphab}
\alpha = \beta N - i.
\end{equation}
Moreover, by Lemma \ref{gauss0} applied to $g'$, one has
$$
|g^{(\beta)}| \leq \frac{1}{|nb|^{\beta-1}}  \mbox{ on $B_{\rm as}$}.
$$
Hence, for $b'\in L^{\rm alg}$ with $|b'|^N =|b|$ and for $x$ in $D$ one has $|b'|=|x|$ and thus we find
\begin{eqnarray*}
\Big|N^\beta x^\alpha g^{(\beta)}(x^N) \Big| & \leq &  \Big|N^\beta x^{\beta N - i } g^{(\beta)}(x^N)\Big| \\
&=& \Big|N^\beta b'{}^{ \beta N - i}g^{(\beta)}(x^N)\Big|  \\
&\leq& \frac{|N|^\beta}{|n|^{\beta-1}} \cdot\frac{| b'|^{ \beta N - i} }{|b|^{\beta-1}}\\
&\leq& |N| |b'|^{N  - i}\\
&\leq& |n|^r |b'|^{N  - i}.
\end{eqnarray*}
Hence, for each $i>0$
\begin{equation}\label{gNi}
\Big|
\frac{g_N^{(i)}(x)}{i!}\Big|\leq \frac{|n|^r}{|i!|} |b'|^{ N  - i },
\end{equation}
on $B'_{\rm as}$.
Thus, the $C^r$-norm of $g_N$ is at most $1$ on $B_{\rm as}$ by (\ref{legendre}).
Now choose $B'$ and choose $x,b'\in B'_{\rm as}$. Develop $g$ around $b'$ into a series  $g(z+b') = \sum_i a_iz^i$. Then $|a_i|\leq  |b'|^{N - i}|n^r|/|i!|$ by (\ref{gNi}), which implies that $g$ is $T_r$ on $B'_{\rm as}$ as follows. First note that $|nb'| \geq |  x - b' |$. Using this and the bounds on the $|a_i|$, we have
\begin{eqnarray*}
\Big|g_N(x) - T^{<r}_{b', g_N}(x)\Big| &=& \Big|  \sum_{i\geq r} a_i ( x - b'   )^i     \Big|  \\
&\leq& \max_{i\geq r} \Big( |b'|^{N - i}(|n^r|/|i!|) \cdot |  x - b'    |^i \Big)\\
& \leq & \max_{i\geq r} \Big( |b'|^{N - i}(|n^r|/|i!|)\cdot  |  x - b'    |^{i-r}\cdot |  x - b'   |^{r} \Big)\\
&\leq&   \frac{|n|^{i}}{|i!|}  | x-b' |^r\\
&\leq& | x-b' |^r,
\end{eqnarray*}
where the last inequality follows from (\ref{legendre}).
This proves the lemma.
\end{proof}


The following is a multi-variable variant of Lemma \ref{gauss1a}.

\begin{prop}\label{gauss1} Let positive integers $r$ and $m$ be given. In the equicharacteristic zero case
set $n=1$ and $N=r$.  In the mixed characteristic case, let $n$ be sufficiently divisible by $p_L$ and $N$ be sufficiently divisible by $n$.
Let $B$ be the box $\prod_{i=1}^m b_i\cdot(1+n\cM_L)$ for some nonzero $b_i\in\cO_L$. Let $g:B\to \cO_L$ be a globally analytic function.
Let $h_N$ be the map $(L^{\rm alg})^m\to (L^{\rm alg})^m$ sending $(x_i)_i$ to $(x_1^N,\ldots, x_m^N)$.
Suppose that the $C^1$-norm of $g$ is at most $1$  on $B_{\rm as}$.
Then, for any box $B'\subset \cO_{L}^m$ such that $h_N(B')\subset B$, the function $g\circ h_N$ satisfies $T_r$ on $B'_{\rm as}$.
\end{prop}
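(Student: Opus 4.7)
The plan is to extend the proof of Lemma \ref{gauss1a} to several variables directly. The decisive observation is that $h_N$ acts on each coordinate independently, so the multi-variable chain rule decomposes cleanly and no new analytic phenomenon appears beyond those handled in the one-variable case; the work consists of bookkeeping with multi-indices.

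The first step is a multi-variable analog of Lemma \ref{gauss0}: for $h$ globally analytic on a box $B' = \prod_i a_i \cM_L$ with $|h| \le |\lambda|$ on $B'_{\rm as}$, one has
$$\Bigl|\frac{1}{\beta!}\frac{\partial^\beta h}{\partial x^\beta}\Bigr| \;\le\; \frac{|\lambda|}{\prod_i |a_i|^{\beta_i}}$$
on $B'_{\rm as}$ for every multi-index $\beta$. This follows from Lemma \ref{gauss0} applied in each variable separately (the Gauss norm of a power series on a box is the maximum of the absolute values of its coefficients, which factors through the product structure). Applying this estimate to each first partial $\partial_j g$, which has norm at most $1$ on $B_{\rm as}$ by hypothesis, yields for every multi-index $\beta$ with some $\beta_j \ge 1$ a bound
$$|\partial^\beta g| \;\le\; \frac{|nb_j|}{\prod_i |nb_i|^{\beta_i}} \qquad \text{on $B_{\rm as}$.}$$

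Next I would compute $\partial^\alpha g_N$, where $g_N := g \circ h_N$. Because $h_N$ is the coordinatewise $N$-th power, the multi-variable chain rule and product rule give a finite expansion
$$\partial^\alpha g_N(x) \;=\; \sum_\beta c_{\alpha,\beta}\, N^{|\beta|}\, x^{N\beta - \alpha}\, (\partial^\beta g)(x^N),$$
where each $c_{\alpha,\beta}$ is an integer (hence of norm at most $1$) and $\beta$ ranges over multi-indices with $\beta_i \le \alpha_i \le N\beta_i$ whenever $\alpha_i > 0$, and $\beta_i = 0$ otherwise. Using the previous bound on $|\partial^\beta g|$, together with $|N| \le |n|^r$ from the divisibility hypothesis on $N$, the very same manipulation as in the one-variable case of Lemma \ref{gauss1a} bounds each term, for $x \in B'_{\rm as}$, by a quantity of the form $|n|^r |b'_j|^N \prod_i |b'_i|^{-\alpha_i}$, which is the multi-variable analog of the estimate (\ref{gNi}).

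Finally, pick $b' \in B'_{\rm as}$ and expand $g_N$ as its Taylor series $\sum_\alpha A_\alpha (x - b')^\alpha$ around $b'$. The bound above on $\partial^\alpha g_N(b')/\alpha!$ controls $|A_\alpha|$; combined with the inequality $|x_i - b'_i| \le |nb'_i|$ available on $B'_{\rm as}$ and with Legendre's estimate (\ref{legendre}), the tail $\sum_{|\alpha| \ge r} A_\alpha(x-b')^\alpha$ is bounded in norm by $|x-b'|^r$, via the same ultrametric calculation as at the end of the proof of Lemma \ref{gauss1a}. This yields $|g_N(x) - T^{<r}_{b',g_N}(x)| \le |x-b'|^r$, which is the $T_r$ property on $B'_{\rm as}$. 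The main (but essentially combinatorial) obstacle is the multi-index bookkeeping in the chain rule; since $h_N$ treats coordinates independently, this amounts to running $m$ parallel copies of the one-variable argument and assembling the bounds.
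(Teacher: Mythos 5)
Your proposal is the right one: the paper's own proof of Proposition \ref{gauss1} is the one-line remark ``As for Lemma \ref{gauss1a},'' and you carry out precisely the intended elaboration, with the same Gauss-norm bound, the coordinate-wise chain rule for $h_N$, and the ultrametric splitting of the Taylor tail.

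The one place where the phrase ``the very same manipulation as in the one-variable case'' hides something genuinely new is the choice of the index $j$ in the estimate $|\partial^\beta g|\le |nb_j|/\prod_i |nb_i|^{\beta_i}$. In one variable this choice is forced; here it is not, and when the $|b_i|$ differ the wrong $j$ breaks the argument. Indeed, after splitting off a factor $|x-b'|^r$ from $\prod_i|x_i-b'_i|^{\alpha_i}$ and using $|x_i-b'_i|\le|n||b'_i|$ on the remaining $|\alpha|-r$ factors, the Taylor-tail estimate reduces (after the $|n|^{|\alpha|}/|\alpha!|$ factor is handled by~(\ref{legendre})) to $|b'_j|^{N}\le\prod_i|b'_i|^{\gamma_i}$ for some multi-index $\gamma\le\alpha$ with $|\gamma|=r$. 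With $m=2$, $r\ge 2$, $|b'_1|$ small, $|b'_2|=1$, $\alpha=(r,1)$ and the (legitimate but bad) choice $j=2$, every admissible $\gamma$ gives $\prod_i|b'_i|^{\gamma_i}\le|b'_1|^{r-1}<1=|b'_2|^{N}$, so the needed inequality fails. The remedy, which replaces the harmless one-variable step $|b'|^{N-r}\le1$, is to take $j$ minimizing $|b_j|$ among indices with $\alpha_j\ge1$ (equivalently $\beta_j\ge1$); then $|b'_j|\le|b'_i|$ whenever $\gamma_i\ge1$, so $\prod_i|b'_i|^{\gamma_i}\ge|b'_j|^{|\gamma|}=|b'_j|^{r}\ge|b'_j|^{N}$ and the estimate closes. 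Apart from making this choice of $j$ explicit, your outline is correct and matches the paper.
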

\begin{proof}
As for Lemma \ref{gauss1a}.
\end{proof}

\begin{proof}[Proof of Theorems \ref{GYT}, \ref{GYTs} and Proposition \ref{gyts} for $r>1$]
We may take $g:P\to X$ with all properties of Theorems \ref{GYT}, \ref{GYTs} and Proposition \ref{gyts} with $r=1$, since this case has already been proved.
We work with fixed $y\in Y$ and $a\in L_N^N$ and omit $y$ and $a$ from the subscripts, explaining uniformity properties in $y$ and $a$ along the way.


Take $n$ and $N$ corresponding to Proposition \ref{gauss1} and our $r$ and $m$.
We may increase the depth of the cell $P$ to the depth $(n_i)_{i=1}^m$ with $n_i=n$.
By Proposition \ref{gauss1}, applied to the restrictions of $g$ to any box $B$ in its domain we find for any $c\in\cO_L^m$ that the function $ g_{\star N,c}$ is globally analytic and satisfies $T_r$ on  $B'_{\rm as}$ for any box $B'$ included in $B_{\star N,c}$.

\par
Similarly as at the end of the proof of the case $r=1$, after rewriting the function $g_{\star N,c}$, we get a definable function $\overline g:\overline P\subset Y\times L_{\overline N}^{\overline N}$ as in the case $r=1$ of Theorems \ref{GYT}, \ref{GYTs} and Proposition \ref{gyts}. Moreover, for each $y\in Y$ and $a\in L_{\overline N}^{\overline N}$, the map $\overline g_{y,a}$ satisfies $T_r$ on $B'_{\rm as}$ for each box $B'$ in its domain.
We claim that $\overline g$ is as desired in Proposition \ref{gyts} and Theorems \ref{GYTs} and \ref{GYT}.
There is only left to check that $\overline g$ satisfies $T_r$ globally. We still omit $y$ and $a$ from the notation. To show $T_r$ for $\overline g$ we will use $T_1$ for the above $g$. Choose $v$ and $w$ in the domain $\overline P$ of $\overline g$. Let $I$ be the set of those indices $i$ such that $v_i$ and $w_i$ have the same order, and $I^c$ its complement  in $\{1,\ldots,m\}$. Let $z$ be the intermediary tuple $(z_i)_i$ such that $z_i=v_i$ when $i\in I$ and $z_i=w_i$ when $i\in I^c$.  Then $w$ and $z$ lie in the same box contained in $\overline P$, and by $T_1$ for the above $g$, we have
\begin{eqnarray*}
|\overline g(v) -  T^{< r}_{w, \overline g} (v)  | & \leq  & \max (|\overline g(v) - \overline g(z)| , |\overline g (z) -  T^{< r}_{w, \overline g} (z) |, | T^{< r}_{w, \overline g} (z) - T^{< r}_{w, \overline g} (v) | ) \\
 & \leq  &
  \max (|\overline g(v) -  \overline g(z) | , |z-w|^r, | T^{< r}_{w, \overline g} (z) - T^{< r}_{w, \overline g} (v) | )  \\
 & \leq  &
  \max (|v^r-z^r| , |z-w|^r ) \\
& \leq  &
  |v-w|^r.
\end{eqnarray*}
The first of these inequalities follows from the ultrametric triangle property, the second from the fact that $\overline g$ satisfies $T_r$ on each box in its domain, the third inequality follows from the construction of $\overline g$ via $g_{\star N,c}$, property $T_1$ for $g$ and the fact that $N\geq r$. The fourth and final inequality follows from the fact that $|v^r-z^r|=|v-z|^r$ by construction of the point $z$. Indeed, $|v-z|=\max_{i\in I^c} |v_i-z_i| = \max_{i\in I^c} (|v_i|,|z_i|)$ and similarly $|v^r-z^r| = \max_{i\in I^c} (|v_i|^r,|z_i|^r)$.
This finishes the proof of Proposition \ref{gyts}, Theorem \ref{GYTs} and thus also of Theorem \ref{GYT}.
\end{proof}

\begin{remark}\label{rem:inj}
In the case where $m=1$ in Theorems \ref{GYT}, \ref{GYTs} and by observing their proof, one can further ensure in Theorems \ref{GYT}, \ref{GYTs} that the coordinate projection
$$
Y\times L_N^N\times \cO_L \to Y\times \cO_L
$$
is finite to one on $P=\{(y,i,x)\mid x\in P_{y,i}\}$.
\end{remark}

\begin{remark}
\label{struct}
Let us comment on how the reparametrization results can be generalized to other fields than the fields $L$ of this section, and to other languages than $\cL=\Lan^L$ or $\cL=\LPas^L$. The language $\cL$ can be interpreted naturally in many more fields than just in $L$, as explained below Definition \ref{fNb}, with still a well-understood geometry of the definable sets by Section 3.4 of \cite{CLip2}. Similarly, instead of $\cL=\Lan^L$ or $\cL=\LPas^L$, for $\cL$ we can take any analytic language formed by adding function symbols for the elements of a separated Weierstrass system as in \cite{CLip} to $\LPas$, further enriched with some constant symbols, and interpret it as an analytic structure on a henselian valued field $L'$ of characteristic zero, as in \cite{CLip}. Suppose that $(L',\cL)$ is such a more general structure.
If, furthermore, for sufficiently many $N>0$, $\cO_{L'}$ is a finite union of sets of the form $\lambda P_N(\cO_{L'})$ for $\lambda\in L'$ where $P_N(\cO_{L'})$ is the set of $N$-th powers in $\cO_{L'}$, and if $\cL$ has constant symbols for these $\lambda$, then we strongly expect Theorems \ref{GYT} and \ref{GYTs} and Proposition \ref{gyts} to go through on $(L',\cL)$ with similar proofs, where the results of \cite{CLip} can be used instead of the quoted results from \cite{CLR}.
\end{remark}

\subsection{A determinant estimate}\label{de}

Once we have the parametrizations of definable sets provided by Theorem \ref{GYT}, we may derive a result analogous to Lemma 3.1 of \cite{qjm}
and Lemma 2.1 of \cite{Marmon} similarly as Pila does in \cite{qjm}.
We now introduce the following notation, that will be used again in subsequent sections:
$$\Lambda_m (k) = \{\alpha \in \mathbb{N}^m ; \vert \alpha \vert = k \}, \ \
\Delta_m (k) = \{\alpha \in \mathbb{N}^m ; \vert \alpha \vert \leq k \}, $$
and
$L _m (k) = \# \Lambda_m (k)$,
$D _m (k) = \# \Delta_m (k)$.
Thus,
$L _m (k) = \binom{k + m - 1}{m - 1}$
and
$D _m (k) = \binom{k + m}{m}$.

\begin{lem}\label{detest}Fix $\mu \in \mathbb{N}$. Let $U$ be an open subset of
$L^m$ contained in a box which is the Cartesian product of $m$ closed ball of equal radius $\varrho \leq 1$.
Let $x_1$, \dots, $x_{\mu}$ be points in $U$, and
$\psi_1$,  \dots, $\psi_{\mu}$ be $C^r$-functions $U \to L$.
Assume
\begin{enumerate}
\item[(1)]The integer $r$ satisfies
$$D_m (r - 1) \leq \mu < D_m (r),$$
\item[(2)]The functions $\psi_i$ satisfy $T_r$ on $U$.
\end{enumerate}
Set $$\Delta = \det (\psi_i (x_j)).$$
Then
$$
\vert \Delta \vert \leq \varrho^e
$$
with
$$
e = \sum_{k = 0}^{r - 1} k L_m (k) + r (\mu - D_m (r - 1)).
$$
\end{lem}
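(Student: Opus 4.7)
The plan is to run the classical Bombieri--Pila determinant trick adapted to the ultrametric setting. Fix any base point $y\in U$ (for instance $y=x_1$). Using property $T_r$, write
$$
\psi_i(x_j) = \sum_{|\alpha|\le r-1} a_{i,\alpha}\,(x_j-y)^\alpha + R_{ij},\qquad
a_{i,\alpha}=\tfrac{1}{\alpha!}\tfrac{\partial^{\alpha}\psi_i}{\partial x^{\alpha}}(y),
$$
where $|a_{i,\alpha}|\le 1$ (from the $C^r$-norm bound in $T_r$) and $|R_{ij}|\le |x_j-y|^r\le\varrho^{r}$ (from the Taylor estimate in $T_r$). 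Since $|x_j-y|\le\varrho$ one also has $|(x_j-y)^\alpha|\le\varrho^{|\alpha|}$. Denote by $c_\alpha\in L^{\mu}$ the column vector $(a_{i,\alpha})_{i=1}^{\mu}$ and by $e_j\in L^{\mu}$ the column $(R_{ij})_{i=1}^{\mu}$.

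Next, apply multilinearity of the determinant in the columns of $M=(\psi_i(x_j))$. Each column $j$ is a sum of $D_m(r-1)+1$ elementary columns, namely $(x_j-y)^\alpha c_\alpha$ for $|\alpha|\le r-1$ together with $e_j$. Expanding, $\Delta$ becomes a sum of terms indexed by the choice, for each $j$, of either a multi-index $\alpha(j)$ with $|\alpha(j)|\le r-1$ (call such columns \emph{monomial}) or the symbol ``error'' (column $e_j$). If $T\subseteq\{1,\ldots,\mu\}$ is the set of monomial indices and the assignment $j\mapsto\alpha(j)$ on $T$ is not injective, the corresponding subdeterminant vanishes since two columns are proportional to the same $c_\alpha$. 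For a nonzero term, factoring out the scalars $(x_j-y)^{\alpha(j)}$ for $j\in T$ leaves a determinant whose columns are the $c_{\alpha(j)}$ ($j\in T$, of sup-norm $\le 1$) and the $e_j$ ($j\notin T$, of sup-norm $\le\varrho^r$); the Leibniz formula bounds it by $\varrho^{r(\mu-|T|)}$. Consequently each nonzero term has norm at most
$$
\varrho^{\,\sum_{j\in T}|\alpha(j)|\,+\,r(\mu-|T|)}.
$$

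Finally, I minimize this exponent. Writing it as $r\mu-\sum_{j\in T}(r-|\alpha(j)|)$, each monomial column with $|\alpha(j)|\le r-1$ contributes a strictly positive saving $r-|\alpha(j)|>0$, so to minimize the exponent one wants $|T|$ as large as possible and the multi-indices of lowest possible degrees. Since $\alpha$ must be injective into the set of multi-indices of degree $\le r-1$, the optimum is $|T|=D_m(r-1)$ with $\{\alpha(j):j\in T\}$ equal to the whole set of such multi-indices. The resulting minimal exponent is exactly
$$
\sum_{k=0}^{r-1} kL_m(k)\;+\;r\bigl(\mu-D_m(r-1)\bigr)=e.
$$
The ultrametric triangle inequality applied to the expansion of $\Delta$ then yields $|\Delta|\le\varrho^{e}$, as claimed.

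The argument is essentially bookkeeping; the only point requiring attention is the combinatorial optimization in the last paragraph, where one must verify that every monomial column is strictly more efficient than an error column (which uses $|\alpha|\le r-1<r$), so that packing in as many distinct low-degree monomials as possible is optimal.
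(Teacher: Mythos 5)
Your proof is correct and follows essentially the same approach as the paper: Taylor-expand each column around a base point, use multilinearity of the determinant to split into monomial and error columns, observe that repeated monomial multi-indices give vanishing terms, bound each surviving term by the ultrametric Leibniz estimate, and minimize the resulting exponent. The paper additionally singles out the first column $j=1$ (which is a pure constant column since the Taylor expansion is around $x_1$), but your uniform treatment handles this automatically because $(x_1-y)^{\alpha}=0$ for $\alpha\neq 0$ and $R_{i,1}=0$, so the argument is equivalent.
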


\begin{proof}
By hypothesis (2), one may write
$$ \psi_i (x_j) = T_{x_1, \psi_i}^{\leq r - 1} (x_j) + R_{i, j} $$ with $$ R_{i, j} \leq \varrho^{r}.$$
Expanding
$ T_{x_1, \psi_i}^{\leq r - 1} (x_j)$ into the sum of
$D_m (r - 1)$ monomial terms of type $\displaystyle \frac{1}{\alpha !}\frac{\partial^\alpha \psi_i}{\partial x^\alpha}(x_1)(x_j-x_1)^\alpha$
one gets an expansion
of $\psi_i (x_j) $ into the sum of
$D_m (r - 1) + 1$ terms, the last one being $R_{i,j}$.
The columns of the matrix $(\psi_i(x_j))$ being indexed by $j$,
 we can write each column in $\Delta$, except for the first one which is $(\psi_i(x_1))_{i\in \{1,\cdots, \mu\}}$, as a sum of
$D_m (r - 1) + 1$ columns, and then expanding the determinant one may write
$\Delta$ as the sum of
$(D_m (r - 1) + 1)^{\mu-1}$ determinants $\Delta_{\ell}$. For each
determinant $\Delta_\ell$, we factor out from its columns the factors $(x_j-x_1)^\alpha$. This lets us write $\Delta_\ell$ as
a product of factors $(x_j-x_1)^\alpha$ and of a determinant
$\delta_\ell$ with columns  $\displaystyle \Bigl (\frac{1}{\alpha !}\frac{\partial^\alpha \psi_i}{\partial x^\alpha}(x_1) \Bigr )_{i\in \{1,\cdots,\mu\}}$, called of order $\vert \alpha \vert $, and columns $(R_{i,j})_{i\in \{1,\cdots, \mu\}}$.
Note that  if $\delta_{\ell} \not=0$, then $\delta_\ell$ cannot have two identical columns, and thus cannot have more than $L_m (k)$ columns of order $k$, $k \leq r - 1$.
Now $\vert \Delta_\ell\vert $ is maximized
when the number of columns of type  $\displaystyle \Bigl (\frac{1}{\alpha !}\frac{\partial^\alpha \psi_i}{\partial x^\alpha}(x_1)\Bigr )_{i\in \{1,\cdots,\mu\}}$ in $\delta_\ell$ is maximal, that is  for
$L_m(k)$ columns of order $k$,   $k\in \{1,\cdots,r-1\}$. Note,
by hypothesis (1), that the number of these columns is then
$\displaystyle \sum^{r-1}_{k=0}L_m(k)=D_m(r-1)\le \mu$.
In this case the degree for the monomial factored out
from $\Delta_\ell$
is $\displaystyle \sum_{k=1}^{r-1}kL_m(k)$, and thus of norm
$\le \varrho^{ \sum_{k=1}^{r-1}kL_m(k)}$, and the number of the remaining columns in $\delta_\ell$, which are of type $(R_{i,j})_{i\in \{1,\cdots, \mu\}}$, is minimal and equals $\mu-D_m(r-1)$.
By hypothesis (2) and the ultrametric  property of the norm, it follows that $\vert \delta_\ell \vert \le \rho^{r(\mu-D_m(r-1))}$.
Thus, for such a $\Delta_{\ell}$,
$\vert \Delta_{\ell} \vert \leq  \varrho^e$. Finally, again by the ultrametric property of the norm, the statement follows.
\end{proof}

\section{A $p$-adic analogue of the Pila-Wilkie Theorem}\label{sec:3}
\subsection{}
We give in this section a $p$-adic version of Pila-Wilkie's Theorem 1.10 of \cite{PiWi} in the form stated by Pila in Theorem 3.5 of \cite{PiSelecta}, that is the so-called block version.
Though the arguments involved in the proofs of both versions are the same, the block version has shown to be more useful in applications
(a reason for the effectiveness of the block version is that the image of a block under a semialgebraic map has a controlled number of $0$-dimensional blocks).
For instance it allows one to bound the number of points of given algebraic degree over $\QQ$ and bounded height.
In the $p$-adic context, such a byproduct of the $p$-adic block version of  Theorem 3.5 of \cite{PiSelecta} is still possible and given at the end of the section in Theorem \ref{algheights}.

We shall work in this section with the language $\cL = \cL_{an}^{\QQ_p}$.  Thus the $\cL$-definable subsets of $\QQ_p^n$ are exactly the subanalytic sets.
We say an $\cL$-definable subset $X$ of $\mathbb{Q}_p^n$ is  of dimension $k$ at a point $x$ if for every small open ball $B$ containing $x$,
$B \cap X$ is of dimension $k$. We say $X$ is of pure dimension $k$
if it is of dimension $k$ at each of its points. 

Let $X$ be an $\cL$-definable subset of $\mathbb{Q}_p^n$.
One defines $X^{\mathrm{alg}}$  as the union of all semialgebraic subsets of $X$ of pure (strictly)
positive dimension. Note that this description of $X^{\mathrm{alg}}$ coincides with the one given in introduction,
since by the curve selection lemma, a point $x$ of a given
semialgebraic set $X$ of positive dimension at $x$ is always contained in an algebraic curve $C$ such that $C (\mathbb{Q}_p) \cap X$
is of positive dimension at $x$. We denote by $X(\ZZ,T)$
the set of points $(x_1,\cdots, x_n)\in X\cap \ZZ$
with $\vert x_i \vert_\RR\le T $, for $i\in \{1,\cdots,n\}$.

Let $Z$ be an $\cL$-definable subset of $\mathbb{Q}_p^n \times \mathbb{Q}_p^m$
and denote by $Y$ the projection of $Z$  on  $\mathbb{Q}_p^m$.
For $y$ in $Y$, we denote by $Z_y$ the fiber of $Z \to Y$ at $y$.
We view $Z$ as a  family of definable subsets $Z_y$ of
$\mathbb{Q}_p^n$
parametrized by $Y$.
We call $Z$ a definable family  of
definable subsets of $\mathbb{Q}_p^n $.

We begin this section by a simple but useful remark relating the norm $\vert \ \vert_\RR$ and
the $p$-adic norm $\vert \ \vert$.

\begin{remark}\label{bound} Let $x \in \ZZ\setminus\{0\}$ and $T>1$ be such that
 $\vert x \vert_\RR \le  T$. Choosing $r\in \NN$ such that
$T  \le p^r$, since $\vert x \vert_\RR  \le p^r$, one obtains $\vert x\vert \ge p^{-r}  \ge T^{-1}$.
\end{remark}


\begin{lem}\label{basicest}
Fix positive integers $m < n$. Then for every integer $d \geq 1$ there exists
an integer $r = r (m, n, d)$ and positive constants $\varepsilon  (m,n,d)$ and
$C  (m, n, d)$ such that the following holds.
For every real number $T >1$, for every open subset $U$ of $\mathbb{Z}_p^m$, for every
locally analytic mapping $\psi =  (\psi_1,  \dots, \psi_{n}) : U \to \mathbb{Q}_p^n$
such that the functions $\psi_\ell$ satisfy condition $T_r$, the subset $\psi (U)(\ZZ,T)$
is contained in the union of at most $$C  (m, n, d)  \, T^{ \varepsilon (m, n, d)}$$ hypersurfaces of
degree $\leq d$.
Furthermore, $\varepsilon (m, n, d) \to 0$ as $d \to \infty$.
\end{lem}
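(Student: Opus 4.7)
The plan is to combine the $p$-adic determinant method of Pila \cite{qjm} and Marmon \cite{Marmon} with the $T_r$-parametrizations from Section \ref{sec:2}. Set $\mu := D_n(d)$, the dimension of the space of polynomials of degree $\leq d$ in $n$ variables, and take $r = r(m,n,d)$ to be the unique integer with $D_m(r-1) \leq \mu < D_m(r)$. Since points of $\psi(U)(\ZZ,T)$ automatically lie in $\ZZ_p^n$, after replacing $U$ by the open subset $\psi^{-1}(\ZZ_p^n)$ we may assume $\psi : U \to \ZZ_p^n$.

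I would then partition $\ZZ_p^m$ into $\varrho^{-m}$ boxes of valuative radius $\varrho \leq 1$ (a negative power of $p$, to be chosen) and intersect each with $U$. Fix one such open piece $U_B \subset B$, let $M_1,\ldots,M_\mu$ enumerate the monomials of degree $\leq d$ in $n$ variables, and consider the matrix with entries $M_k(\psi(x))$ as $x$ ranges over the preimages in $U_B$ of the points of $\psi(U_B)(\ZZ,T)$. If its rank is $< \mu$, a nontrivial linear relation $\sum \lambda_k M_k$ vanishes on all of $\psi(U_B)(\ZZ,T)$, yielding a single hypersurface of degree $\leq d$ that covers this set. Otherwise some $\mu\times\mu$ minor $\Delta$ is nonzero. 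Each monomial $M_k$ has $C^r$-norm $\leq 1$ and trivially satisfies $T_r$ on $\ZZ_p^n$, so by Lemma \ref{prod} each $M_k \circ \psi$ satisfies $T_r$ on $U_B$, and Lemma \ref{detest} yields
$$
|\Delta| \leq \varrho^e, \qquad e := \sum_{k=0}^{r-1} k L_m(k) + r(\mu - D_m(r-1)).
$$
On the other hand $\Delta$ is a nonzero integer with $|\Delta|_\RR \leq \mu!\, T^{d\mu}$, so Remark \ref{bound} gives $|\Delta| \geq (\mu!)^{-1} T^{-d\mu}$.

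Choosing $\varrho$ to be the largest power of $p$ strictly below $(\mu!)^{-1/e}\, T^{-d\mu/e}$ makes the nonzero-determinant case impossible, so every nonempty $U_B$ contributes at most one hypersurface. The total number of hypersurfaces is therefore bounded by $C(m,n,d)\, T^{\varepsilon(m,n,d)}$ with $\varepsilon(m,n,d) := dm\mu/e$. The main obstacle is the final asymptotic: one has to verify $\varepsilon(m,n,d) \to 0$ as $d \to \infty$. Since $\mu \sim d^n/n!$ and the defining relation $D_m(r) \sim r^m/m! \geq \mu$ forces $r \sim c_m d^{n/m}$, a direct computation gives $\sum_{k=0}^{r-1} k L_m(k) \sim \frac{m}{m+1} r\mu$ and hence $e \geq \alpha_m r \mu$ for some $\alpha_m > 0$ depending only on $m$. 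Therefore $\varepsilon(m,n,d) \leq (\alpha_m)^{-1} dm/r \sim \beta_m\, d^{1 - n/m}$, which tends to $0$ since $m < n$.
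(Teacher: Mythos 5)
Your argument is essentially the paper's own proof: the same choice $\mu = D_n(d)$ and $r$ with $D_m(r-1)\le\mu< D_m(r)$, the same reduction to a partition of $\ZZ_p^m$ into balls of suitably small radius $\varrho$, the same upper bound $\vert\Delta\vert\le\varrho^e$ via Lemma~\ref{prod} and Lemma~\ref{detest}, the same lower bound on $\vert\Delta\vert$ for nonzero $\Delta$ via Remark~\ref{bound}, and the same conclusion that each small ball contributes at most one hypersurface of degree $\le d$. Two cosmetic differences: you bound the real absolute value of the determinant by $\mu!\,T^{d\mu}$, whereas the paper uses the sharper $\mu!\,T^{V}$ with $V=\sum_{k=0}^d kL_n(k)$ (since a permutation term of the determinant takes one factor per monomial, its total degree is exactly $V<d\mu$); both yield $\varepsilon(m,n,d)\to 0$, and in fact $V$ and $d\mu$ have the same order $d^{n+1}$, so nothing is lost asymptotically. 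You also carry out the asymptotic calculation showing $e\gtrsim \frac{m}{m+1}r\mu$ and hence $\varepsilon(m,n,d)\lesssim d^{1-n/m}\to 0$ explicitly, where the paper refers to the computation in \cite{qjm}, p.~212; and you extract the hypersurface directly from a nontrivial linear dependence among the $\mu$ rows rather than via the maximal-rank/extra-column argument of \cite{BP} that the paper invokes — both are standard and equivalent.
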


\begin{proof} With the notation of Section \ref{de}, set
$\mu = D_n(d)$ and fix $r$ such that
$D_m (r - 1) \leq \mu < D_m (r)$.
Note that $r$ is unique, so we can denote it by
$r(m, n, d)$.
Let $\psi =  (\psi_1,  \dots, \psi_{n}) : U \to \mathbb{Z}_p^n$, with $U$ an open subset of $\mathbb{Z}_p^m$,
such that the functions $\psi_i$ satisfy the condition  $T_r $.
Let $\Sigma\subset  \mathbb{Z}_p^m$ be a ball of radius $\varrho < 1$. Consider $\mu$ points (possibly with repetition) $P_1$, \dots, $P_{\mu}$
in $\Sigma \cap U \cap \psi^{-1} (\psi(U)(\ZZ,T))$
and the $\mu$ monomials $\psi^{\alpha}:=\psi_1^{\alpha_1}\cdots \psi_n^{\alpha_n}$, $\alpha \in \Delta_n (d)$.
By Lemma \ref{prod} the mappings $\psi^{\alpha}$, for
$\alpha \in \Delta_n (d)$, satisfy $T_r$ as well as the mappings $\psi_\ell$,
$\ell\in \{1,\cdots, n\}$.
By Lemma \ref{detest} it follows that
\begin{equation}\label{upbis}
\vert \det (\psi^\alpha (P_j))  \vert \leq   \varrho^{e}
\end{equation}
with
$$e = e (m,n, d) = \sum_{k = 0}^{r - 1} k L_m (k) + r (\mu - D_m (r - 1)).$$
On the other hand, since $\vert \psi_\ell(P_j)\vert_\RR \le T$, for all $j\in \{1, \cdots, \mu\}$ and all
$\ell\in \{1, \cdots, n\}$, after expanding  the determinant $ \det (\psi^\alpha (P_j))$ one gets a sum of $\mu!$ integers, each
of them having a real norm $\le T^V$, with $V = \sum_{k = 0}^d k L_n (k)$. It follows that the integer
$ \det (\psi^\alpha (P_j))$ has a real norm   $\le \mu! T^V$, and finally by Remark \ref{bound}, one has
under the condition $\det (\psi^\alpha (P_j)) \not= 0$,
\begin{equation}\label{downbis}
\vert \det (\psi^\alpha (P_j))  \vert   \ge   \mu!^{-1}T^{-  V}.
\end{equation}

By putting together (\ref{upbis}) and (\ref{downbis}),
it follows that if
\begin{equation}\label{boundonrhobis}\varrho  < \mu!^{-1/e}  T^{-V/e}
\end{equation}
then $\det (\psi^\alpha (P_j)) = 0$.

Now the end of the proof  is quite similar to  that of Lemma 1 of \cite{BP}.
For $\varrho>0$ as in (\ref{boundonrhobis}), note that the
matrix $$ A(P_1, \cdots, P_\mu) = \Bigl(\psi^\alpha (P_j)\Bigr) $$
has rank $\leq \mu - 1$, with $\alpha$ running over $\Delta_n (d)$
and for any  $P_1,\cdots, P_\mu
\in \Sigma \cap U \cap \psi^{-1} (\psi(U)(\ZZ,T))$.
Say that, for instance,  in this matrix the columns are indexed by $j$.
Let $a$ be the maximal
rank of $ A(P_1, \cdots, P_\mu)$ over all $P_1,\cdots, P_\mu
\in  \Sigma \cap U \cap \psi^{-1} (\psi(U)(\ZZ,T))$ and
let $M=(\psi^\alpha(P_j))_{\alpha\in I, j\in \{1, \cdots, a\}}$
be of rank $a$, for some fixed $P_1,\cdots, P_a\in  \Sigma \cap U \cap \psi^{-1} (\psi(U)(\ZZ,T))$
and some  $I\subset \Delta_n(d)$ of cardinality $a$.
Since $a<\mu$, we can choose $\beta\in \Delta_n(d)\setminus I$.

Let us denote by $f(x)$ the determinant of the matrix
$(M' x^\gamma)_{\gamma\in I\cup \{\beta\}}$, where
$x=(x_1, \cdots, x_n)$ and  $M'$ is $M$ augmented
by the line $\psi^\beta(P_j)_{j\in \{1, \cdots, a\}}$.
The polynomial $f$ is not zero since the coefficient of $x^\beta$ in $f$ is
the non zero minor $\det(M)$ and the degree of $f$ is at most $d$.
Furthermore for any $u \in  \Sigma \cap U \cap \psi^{-1} (\psi(U)(\ZZ,T))$ we have
$f(\psi(u))=0$, by definition of the maximal rank $a$.

Since there exists a constant $C'$ depending only on $m, n, d$ (and $p$), such that
$\mathbb{Z}_p^m$ is covered with $\leq C' \, T^{m V/e}$
balls of radius $\varrho$ such that (\ref{boundonrhobis}) holds, we get the required result,
since, by a straightforward computation done in \cite{qjm}, p. 212 (the constant $B$ of \cite{qjm}
being our constant $e$), for fixed $m < n$, $m V/e \to 0$ as $d \to \infty$.
\end{proof}

\begin{prop}\label{ml}
Let $Z$ an $\cL$-definable family  of
$\cL$-definable subsets of $\mathbb{Z}_p^n $  parametrized by $Y \subset \mathbb{Q}_p^{\ell}$.
Assume all fibers have dimension $ < n$.
Let $\varepsilon > 0$. There exists an integer $d = d (\varepsilon, m, n)$ and a positive real number
$C (Z, \varepsilon)$
such that, for every $y$ in $Y$ and every $T>1$, the set
$Z_y(\ZZ,T)$ is contained in the union of at most
$C (Z, \varepsilon) \, T^\varepsilon$ algebraic hypersurfaces of degree at most $d$.
\end{prop}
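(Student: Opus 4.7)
The plan is to combine the uniform $T_r$-parametrizations of Theorem \ref{GYT} with the determinant estimate of Lemma \ref{basicest}, exploiting the fact that for $L = \QQ_p$ the residue rings $L_N$ are finite. First I would partition $Y$ definably according to the dimension $m$ of the fiber $Z_y$; by the $\cL$-definable dimension theory (Corollary \ref{b} and \cite{CLb}) this yields finitely many strata indexed by $0 \le m \le n-1$, and on each stratum the fiber dimension is constant. The stratum $m = 0$ is trivial: in a definable family the discrete fibers $Z_y$ have uniformly bounded cardinality, so their $T$-bounded integer points can be covered by a bounded number of (degree-one) hypersurfaces.

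For each stratum with $1 \le m \le n-1$, given $\varepsilon > 0$, I would first invoke the final assertion of Lemma \ref{basicest} to choose $d = d(\varepsilon, m, n)$ large enough that $\varepsilon(m,n,d) < \varepsilon$, then set $r := r(m,n,d)$. Applying the family form of Theorem \ref{GYT} with this $r$ to the restricted family yields a uniform $T_r$-parametrization
\[
g_{y,i} : P_{y,i} \subset \ZZ_p^m \to Z_y, \qquad i \in L_N^N,
\]
whose images cover $Z_y$ for every $y$ in the stratum. Since $L = \QQ_p$, the index set $L_N^N$ is finite, with cardinality depending on $Z$, $\varepsilon$, $m$, $n$ but not on $y$; by invoking Theorem \ref{GYTs} I may further ensure each $P_{y,i}$ is an open cell around zero in $\ZZ_p^m$, in particular an open subset, as required by Lemma \ref{basicest}.

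I would then apply Lemma \ref{basicest} to each $g_{y,i}$: this shows that $g_{y,i}(P_{y,i})(\ZZ, T)$ is contained in the union of at most $C(m,n,d)\, T^{\varepsilon(m,n,d)}$ hypersurfaces of degree $\le d$, with constants independent of $y$ and of $i$. Taking the union over the (finitely many) $i \in L_N^N$ and then over the (finitely many) strata yields a total bound of the required form $C(Z, \varepsilon)\, T^\varepsilon$, with final degree $d$ the maximum of $d(\varepsilon, m, n)$ over $1 \le m \le n-1$.

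The only point demanding attention is the uniformity in $y \in Y$. On the parametrization side this is delivered by the family statement of Theorem \ref{GYT}, which produces the $g_{y,i}$ definably in $y$ with a fixed finite index set $L_N^N$. On the counting side it is automatic, because the constants in Lemma \ref{basicest} depend only on $m$, $n$, $d$ and not on the particular analytic map $\psi$. Beyond this bookkeeping I do not anticipate a real obstacle, provided $d$ is chosen first (to force the exponent below $\varepsilon$) and $r$ is chosen afterwards (so that Theorem \ref{GYT} delivers precisely the Taylor approximation order Lemma \ref{basicest} requires).
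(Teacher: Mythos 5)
Your proposal is correct and follows essentially the same route as the paper: choose $d$ large so that $\varepsilon(m,n,d)<\varepsilon$, set $r=r(m,n,d)$, apply the family version of Theorem \ref{GYT} (with finite index set since $L=\QQ_p$ has finite residue rings), and feed each resulting $T_r$-parametrizing chart into Lemma \ref{basicest}. The extra steps you add — stratifying $Y$ by fiber dimension, treating $m=0$ separately, and invoking Theorem \ref{GYTs} for openness and local analyticity — are harmless reductions the paper handles implicitly (it fixes $m$ at the outset, and $T_r$ already requires open domains), so there is no substantive difference.
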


\begin{proof} The argument is quite similar to the one in Proposition 6.2 of \cite{PiWi}.
We assume that all fibers of $Z$ have dimension $m<n$. Take $d=d (\varepsilon, m, n)$  large enough in order to have
$\varepsilon(m,n,d) < \varepsilon$ and take $r=r(m,n,d)$, where $\varepsilon(m,n,d)$ and
$r(m,n,d)$ are given in Lemma \ref{basicest}. By Theorem \ref{GYT} there exist an integer
$K$ and a definable family $(g_{y,i})_{y\in Y, i\in \{1, \cdots,K\}}$ of locally analytic functions
$$ g_{y,i}: P_{y,i}\to Z_y$$
satisfying $T_r$ and such that
$$ \bigcup_{i=1}^K g_{y,i}(P_{y,i})=Z_y.$$
By Lemma \ref{basicest}, since $\dim(Z_y)<n$, for every $i\in \{1, \cdots, K\}$, the set $g_{y,i}(P_{y,i})(\ZZ,T)$
is contained in  $C(m,n,d)T^\varepsilon$ algebraic hypersurfaces of degree at most $d$, with $C(m,n,d)$ as in Lemma \ref{basicest}.
We set $C(Z,\varepsilon)=K C(m,n,d)$.
\end{proof}



We now come to the main result of this section, the $p$-adic version of Theorem 3.5 of \cite{PiSelecta}. We first
define our notion of block, essentially in the same way as in \cite{PiSelecta}, up to connectedness.

\begin{defn}
A block $A\subset \QQ_p^n$ is either a singleton, or,
a smooth definable set of pure dimension $d>0$ which is contained in a smooth semialgebraic set of pure dimension $d$.

In particular, for a block $W$ of positive dimension, one has $W^{\mathrm alg}=W$.
Note that the interior $int(X)$ of a definable set $X$ of dimension $n$ in $\QQ_p^n$ is a block, since $int(X)$ is the intersection of itself with the semialgebraic set $\cO_K^n$. On the other hand the regular part of a definable set is not always a block.
A family of blocks $W \subset \QQ_p^n \times Y$ is a definable set whose fibers $W_y$, for $y\in Y$,
are blocks in $\QQ_p^n$.
\end{defn}


In Proposition \ref{blocks}, we consider integer points of bounded height.
We deduce from it Theorem \ref{algheights}, which is about rational points,
and  we finally prove the rational version  of Proposition \ref{blocks} in Theorem \ref{blocksQ}.

\begin{prop}\label{blocks}
Let $Z\subset \QQ_p^{n+\ell}$ be an ${\cal L}$-definable family of ${\cal L}$-definable subsets of $\QQ_p^n$ parametrized
by a definable set $Y\subset \QQ_p^{\ell}$. Let $\varepsilon >0$ be given. There exist $s=s(\varepsilon,n)\in \NN$, a constant
$C(Z,\varepsilon)$ and a
family of blocks $(W_{y,\sigma})_{(y,\sigma)\in \QQ_p^\ell\times \QQ_p^s}\subset \QQ_p^{n\ell} \times \QQ_p^s$
such that for any $y\in Y$, for any $T>1$
$$Z_y(\ZZ,T) \subset \bigcup_{\sigma \in {\cal S}} W_{y,\sigma},$$
for ${\cal S}={\cal S}(Z,\varepsilon, T) \subset \QQ_p^s$ of cardinal less than $C(Z,\varepsilon)T^{\varepsilon}$.
In particular, for all
$y\in Y$, denoting by $W_y^\varepsilon$ the union over $\sigma$ in $\QQ_p^s$
of the sets $W_{y,\sigma}$ with $\dim(W_{y,\sigma})>0$, one has $W_y^\varepsilon\subset (Z_y)^\mathrm{alg}$ and,
$$ \# (Z_y\setminus W_y^\varepsilon)(\ZZ,T)\le C(Z,\varepsilon) T^{\varepsilon}\ \mbox{for all $T>1$}. $$
\end{prop}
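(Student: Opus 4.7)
The plan is to prove Proposition \ref{blocks} by double induction on the pair $(n, m)$ ordered lexicographically, where $n$ is the ambient dimension of $\QQ_p^n$ and $m$ is the maximal fiber dimension of $Z$. The base cases are immediate: when $n = 0$ each $Z_y$ is trivially contained in $\QQ_p^0$, and when $m = 0$ the fibers of $Z$ are uniformly finite by $b$-minimality (Corollary \ref{b}), so each point yields a zero-dimensional singleton block. In the inductive step with $0 < m \le n$, I distinguish the top-dimensional case $m = n$ from the proper subcase $m < n$.

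When $m = n$, the subanalytic interior $\mathrm{int}(Z_y) \subset \QQ_p^n$ is an open definable set contained in the smooth semialgebraic ambient $\QQ_p^n$ of pure dimension $n$, hence itself an $n$-dimensional block; every point of $\mathrm{int}(Z_y)$ lies in a semialgebraic open ball contained in $Z_y$, so $\mathrm{int}(Z_y) \subset (Z_y)^{\mathrm{alg}}$. This contributes a single positive-dimensional block uniformly in $y$, while the boundary $Z_y \setminus \mathrm{int}(Z_y)$ forms a definable family of strictly smaller fiber dimension, to which the secondary induction on $m$ applies. When $0 < m < n$, I apply Proposition \ref{ml} with parameter $\varepsilon/2$ to obtain $d$ and $C_0$ such that $Z_y(\ZZ, T)$ lies in at most $C_0 T^{\varepsilon/2}$ algebraic hypersurfaces $H$ of degree $\le d$. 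Forming the definable family
\[
\tilde Z \;=\; \{(y, H, x) \in Y \times \cH_d \times \QQ_p^n : x \in Z_y,\ H(x) = 0\}
\]
parametrized by the pairs $(y, H)$, where $\cH_d$ is the parameter space of hypersurfaces of degree $\le d$, each fiber $Z_y \cap H$ is a subanalytic subset of the $(n-1)$-dimensional algebraic hypersurface $H$. A uniform semialgebraic cell decomposition of the universal family $\{H\}_{H \in \cH_d}$ covers each $H$ by finitely many definable open pieces, each bi-definably isomorphic via a suitable coordinate projection to an open subset of $\QQ_p^{n-1}$, which lets one view $\tilde Z$ as a definable family of subsets of $\QQ_p^{n-1}$. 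The primary induction hypothesis (strictly smaller ambient dimension $n-1$), applied with parameter $\varepsilon/2$, yields at most $C_1 T^{\varepsilon/2}$ blocks in $\QQ_p^{n-1}$ covering each $(\tilde Z)_{y, H}(\ZZ, T)$. Pulling these blocks back to $H \subset \QQ_p^n$ through the algebraic charts produces blocks in $\QQ_p^n$, and the positive-dimensional lifted blocks lie in semialgebraic subsets of $Z_y \cap H \subset Z_y$, hence in $(Z_y)^{\mathrm{alg}}$. Summing over the $\le C_0 T^{\varepsilon/2}$ hypersurfaces per $y$ yields the desired $\le C_0 C_1 T^{\varepsilon}$ blocks.

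The main obstacle I expect is making the reparametrization of $\tilde Z$ as a definable family of subsets of $\QQ_p^{n-1}$ truly uniform in the parameters $(y, H) \in Y \times \cH_d$: producing a definable family of algebraic charts on the universal family of hypersurfaces, and verifying that the lift of a block in $\QQ_p^{n-1}$ via the inverse of such a chart is again a block in $\QQ_p^n$ with the containment in $(Z_y)^{\mathrm{alg}}$ preserved for the positive-dimensional members. Both points are controlled by a uniform semialgebraic cell decomposition of $H$ parametrically in $H \in \cH_d$, together with the observation that semialgebraic subsets of the algebraic hypersurface $H$ remain semialgebraic when viewed in the ambient $\QQ_p^n$.
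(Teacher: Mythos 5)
Your argument is correct in outline but takes a genuinely different route from the paper's. The paper never decreases the ambient dimension: it inducts only on the fiber dimension $k$, applies Proposition \ref{ml} to all $\binom{n}{k+1}$ coordinate projections of $Z_y$ onto $\QQ_p^{k+1}$, intersects the resulting cylinders over degree-$\le d$ hypersurfaces to form a family $\Sigma$ of algebraic subsets of $\QQ_p^n$ of dimension $\le k$, and then stratifies $Z\cap\Sigma$ into the three loci $Z_1, Z_2, Z_3$ where $(Z\cap\Sigma)_{y,\sigma}$, $\Sigma_{y,\sigma}$, or $Z_{y,\sigma}$ fails to be regular of dimension $k$. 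These strata have fiber dimension $<k$, so the induction applies; the complementary locus is a family of blocks outright, because at a point $(x,y,\sigma)$ of $Z\cap\Sigma$ outside $Z_1\cup Z_2\cup Z_3$ the germ of $Z_{y,\sigma}$ coincides with the germ of the semialgebraic $\Sigma_{y,\sigma}$. You instead apply Proposition \ref{ml} once in $\QQ_p^n$, restrict to the hypersurfaces $H$, chart $H$ via a uniform semialgebraic cell decomposition, invoke the inductive hypothesis with ambient $\QQ_p^{n-1}$, and pull blocks back. This can be made to work, and is arguably more transparent, but the step you flag as the main obstacle really does carry the weight, and your formulation of it is imprecise: the pieces of a cell decomposition of $H(\QQ_p)$ are not all ``open pieces'' of $H$, since over $\QQ_p$ the rational points of a hypersurface can have singularities and lower-dimensional cells. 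The lifting argument should be written out along the following lines: if $C\subset H$ is a semialgebraic cell given as the graph of an analytic semialgebraic $f:U\subset\QQ_p^e\to\QQ_p^{n-e}$ (with $e\le n-1$), the coordinate projection $\pi|_C:C\to U$ is a bijection preserving integer coordinates of height $\le T$; if $W'\subset U$ is a positive-dimensional block contained in a smooth semialgebraic $S'$ of pure dimension $d$, then $\{(u,f(u)):u\in W'\}$ is contained in the smooth semialgebraic graph $\{(u,f(u)):u\in S'\cap U\}$ of pure dimension $d$, so it is a block in $\QQ_p^n$, and since it lies inside $Z_y\cap H\subset Z_y$ it is contained in $(Z_y)^{\rm alg}$. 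The paper's approach buys itself out of this lifting machinery entirely by producing the blocks directly as the regular locus of $Z\cap\Sigma$, at the cost of the slightly more intricate three-way stratification; your approach buys a cleaner dimension-reduction structure at the cost of having to verify block stability under semialgebraic charts.
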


\begin{proof} We follow the strategy of  Pila's proof of Theorem 3.5  in \cite{PiSelecta}. Since we want to  bound the density
of integers points of $Z$, one can assume that $Z\subset  \ZZ_p^n\times Y$.
Since our result is true for the union of two families when it is true for each family, one can assume that the dimension
$k$ of the fibers $Z_y$ of $Z$ is constant and that these fibers are of pure dimension. We proceed by induction on $k$.

For $k=0$, the family $Z$ has a finite number of points for fibers, and this number is bounded with respect to the parameter $y$, by a constant
depending only on the set $Z$. It follows that $Z$ itself is a finite union of families of blocks.

Assume now that our statement is true for definable families of fiber dimension $\le k-1$ and consider $Z$ a definable family
in $\ZZ_p^{n+\ell}$ of fiber dimension $k\ge 1$.

We first remark that one can easily assume that the fibers $Z_y$ are not of maximal dimension in $\ZZ_p^n$. For this let us denote
by $Z^0_y$ the regular part of a fiber $Z_y$, that is the set of points of $Z_y$ at which $Z_y$ is smooth. Now if $k=n$, the definable
subset $Z'=\{(x,y)\in Z; x\in Z^0_y\}$ of $\ZZ_p^{n+\ell}$ is a family of blocks, since a fiber is given by the intersection of the semialgebraic
set $\QQ_p^{n}$ with the definable set $Z^0_y$. Finally since the definable set $Z\setminus Z'$ has fiber dimension $\le n-1$, we can apply to
this set the induction hypothesis, and obtain our statement in this case.

From now on we assume that $k<n$, and we fix $\varepsilon>0$.
By Proposition \ref{ml}, for any $y\in Y$ and for any choice of $k+1$ coordinates in $\QQ_p^n$, the projection
$\pi(Z_y)$ onto the corresponding $\QQ_p^{k+1}$ subspace of $\QQ_p^n$ is such that
$\pi(Z_y)(\ZZ,T)$ is contained in the union of at
most $C(Z,\varepsilon)T^{\varepsilon/M}$ hypersurfaces of degree at most $d=d(\varepsilon,k,n)$, with $M=n!/(k+1)!(n-k-1)!$,
and with some constant $C(Z,\varepsilon)$ depending only on $Z$ and $\varepsilon$. Let us denote by $\Sigma\subset \QQ_p^{n+\ell+s}$, $s=s(\varepsilon,n)$,
the family of
algebraic sets of $ \QQ_p^n$ defined by intersecting the cylinders in $\QQ_p^n$ over algebraic hypersurfaces of degree at most $d$ in $\QQ_p^{k+1}\subset \QQ_p^n$, for
every choice of $k+1$ coordinates in $\QQ_p^n$.
The dimension of a given fiber of $\Sigma$ depends on the transversality of the $M$  cylinders over hypersurfaces in $\QQ_p^{k+1}$ that give this fiber by intersecting each other, but this dimension is at most $k$, by transversality of the supplementary coordinates in $\QQ_p^n$
for each choice of $k+1$ coordinates.
We now consider the family $Z \times \QQ_p^s$ in $\QQ_p^{n+\ell+s}$ which, for the sake of simplicity, we shall still denote by $Z$. The set $Z_y(\ZZ,T)$ is contained in the intersection of the
cylinders over the hypersurfaces of $\QQ_p^{k+1}$ containing the points of $\pi(Z)(\ZZ,T)$, that is,
$Z_y(\ZZ,T)$ is contained
in at most
$C'(Z,\varepsilon)T^{\varepsilon}$ fibers of $\Sigma_y$ for some constant $C'(Z,\varepsilon)$.

Now, as in the proof of Theorem 3.5 of \cite{PiSelecta}, we stratify $Z\cap \Sigma$ in the following way.
Let us consider the definable family
$$ Z_1=\{(x,y,\sigma)\in Z\cap \Sigma\subset \QQ_p^{n+\ell+s}; x\not\in \hbox{reg}_k((Z\cap \Sigma)_{y,\sigma})\},$$
 where $\hbox{reg}_k ((Z\cap \Sigma)_{y,\sigma})$ is the regular subset of $(Z\cap \Sigma)_{y,\sigma}$ of dimension $k$, that is to say the set of points
 of $(Z\cap \Sigma)_{y,\sigma}$ in the neighbourhood of which $(Z\cap \Sigma)_{y,\sigma}$ is smooth and of dimension $k$. Then $Z_1$ is a definable family with fiber dimension
 $<k$ and we can apply the induction hypothesis : there exists a family of blocks $W^1\subset \QQ_p^{n+ \ell+s_1}$ such that
 for all $y\in Y$, $Z^1_y(\ZZ,T) \subset \bigcup_{\sigma\in {\cal S}_1} W_{y,\sigma}^1$, with
 $s_1=s_1(Z_1,\varepsilon)$ and ${\cal S_1}\subset \QQ_p^{s_1}$
 of cardinal less than  $C(Z_1,\varepsilon)T^{\varepsilon}$.

 The same kind of data $s_2$, $\cal S_2\subset \QQ_p^{s_2}$, $W^2$,  $C(Z_2, \varepsilon)$ and
 $s_3$, $\cal S_3\subset \QQ_p^{s_3}$, $W^3$,  $C(Z_3, \varepsilon)$
  are in the same way provided by the induction hypothesis for the families
$$ Z_2=\{(x,y,\sigma)\in Z\cap \Sigma\subset \QQ_p^{n+ \ell+s}; x\not\in \hbox{reg}_k((\Sigma)_{y,\sigma})\},$$
$$ Z_3=\{(x,y, \sigma)\in Z\cap \Sigma\subset \QQ_p^{n+ \ell+s}; x\not\in \hbox{reg}_k((Z)_{y,\sigma})\},$$
since these two families have fiber dimension $<k$ as well as the family $Z_1$.

Now observe that for $(x,y,\sigma)$ a point of $Z\cap \Sigma$ not in the family $Z_1\cup Z_2\cup Z_3$, a sufficiently small semialgebraic neighbourhood
$\Sigma_{y,\sigma}\cap B(x,\eta)$ of $x$ coincides with  $Z_{y,\sigma}\cap B(x,\eta)$.  The family $(Z\cap \Sigma)\setminus (Z_1\cup Z_2\cup Z_3)$ is therefore
a family of blocks with fibers of dimension $k>0$, such that, for $y\in Y$, a union of not more than
$C'(Z,\varepsilon)T^{\varepsilon}$ of them contains the whole set $Z_y(\ZZ,T) \setminus  (Z_1\cup Z_2\cup Z_3)$.

Denoting $C''(Z,\varepsilon)=C(Z_1,\varepsilon)+C(Z_2,\varepsilon) + C(Z_3,\varepsilon)$ and considering that the parameter spaces $\QQ_p^s$, $\QQ_p^{s_i}$, $i=1,\cdots,3$, are all
contained in a single parameter space, also denoted $\QQ_p^s$ for simplicity,  one obtains that, for any $y\in Y$, the set
 $Z_y(\ZZ,T)$ is contained in at most $C'(Z,\varepsilon)T^{\varepsilon}$ fibers of $\Sigma_y$ over $\QQ_p^s$, each of them being decomposed in at most
 $1+C''(Z,\varepsilon)T^{\varepsilon}$ family of blocks, providing the existence of the desired $W$ and ${\cal S}$. To conclude for the final statement of the proposition, observe again that $W_y^\varepsilon\subset (Z_y)^\mathrm{alg}$ follows from the definition of $(Z_y)^\mathrm{alg}$ and that blocks of dimension zero are singletons by definition, which implies the final bound of the proposition.
 \end{proof}

Before applying Proposition \ref{blocks} to algebraic points in $\QQ_p^n$ of bounded algebraic
degree over $\QQ$ and bounded height, let us recall the notion of polynomial height defined in the introduction that will both encode classical height and algebraic degree over $\QQ$.

For $a=r/s\in \QQ$, with $r$ and $s$ relatively prime integers, let
$$h(a) :=\max \{ \vert r \vert_\RR, \vert s \vert_\RR   \}$$
and for $a=(a_0, \cdots, a_k)\in \QQ^{k+1}$, we set
$$H_0(a ):=\max \{ h(a _0), \cdots, h(a_k) \}.$$
Now for  $k \in \NN\setminus \{0\}$ and  for $x\in \QQ_p$, we denote by $H^{poly}_k(x)$ the following element of $\NN\cup\{+\infty\}$
$$H^{poly}_k(x):=\inf\{ H_0(a); a=(a_0,\cdots, a_k)\in \QQ^{k+1}\setminus \{0\} , \sum_{j=0}^k a_jx^j=0   \},$$
$$H^{poly}_{k,\ZZ}(x):=\inf\{ H_0(a); a=(a_0,\cdots, a_k)\in \ZZ^{k+1}\setminus \{0\} , \sum_{j=0}^k a_jx^j=0   \},$$
and for $x=(x_1,\cdots, x_n)\in \QQ_p^n$, we finally set
$$H^{poly}_k(x):= \max\{ H^{poly}_k(x_i), i=1, \cdots, n\},$$
$$H^{poly}_{k,\ZZ}(x):= \max\{ H^{poly}_{k,\ZZ}(x_i), i=1, \cdots, n\}.$$



For $Z$ a definable subset of $\QQ_p^n$, $k\in \NN\setminus \{0\}$ and $T>1$ a real number,
 we denote by  $Z(k,T)$ the set of points $x\in Z$ such that
$H^{poly}_k(x)\le T$, and by $Z_\ZZ(k,T)$ the set of points $x\in Z$ such that
$H^{poly}_{k,\ZZ}(x)\le T$.

\begin{thm}\label{algheights}
Let $n, \ell,k$ be nonnegative integers.
Let $Z\subset \QQ_p^{n+ \ell}$ be an ${\cal L}$-definable family of ${\cal L}$-definable subsets of $\QQ_p^n$ parametrized
by a definable set $Y\subset \QQ_p^{ \ell}$. Let $\varepsilon >0$. There exists $s=s(\varepsilon,n)$, a constant
$C(Z,\varepsilon,  k)$ and a family of blocks $V\subset \QQ_p^{n+ \ell}\times\QQ_p^s$ such that for any
$y\in Y$, for any $T>1$
$$Z_y(k,T) \subset \bigcup_{\sigma \in {\cal S}} V_{y,\sigma},$$
for ${\cal S}={\cal S}(Z,\varepsilon,k,T) \subset \QQ_p^s$ of cardinal less than
$C(Z,\varepsilon,k)T^{\varepsilon}$.
In particular, for any $y\in Y$, denoting by $V^\varepsilon_y$ the union over $\sigma\in \QQ_p^s$ of the $V_{y,\sigma}$ of dimension $>0$, one has $V_y^\varepsilon \subset (Z_y)^\mathrm{alg}$ and
$$\# (Z_y\setminus V^\varepsilon_y)(k,T)\le C(Z,\varepsilon,k) T^{\varepsilon}. $$
\end{thm}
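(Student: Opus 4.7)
The plan is to reduce Theorem~\ref{algheights} to Proposition~\ref{blocks} by encoding the algebraic-degree constraint on each coordinate of $x$ through auxiliary integer parameters, namely the coefficients of a nonzero polynomial of degree $\le k$ annihilating that coordinate. Concretely, I introduce variables $a=(a^{(1)},\ldots,a^{(n)})$ with each $a^{(i)}=(a_0^{(i)},\ldots,a_k^{(i)})\in \QQ_p^{k+1}$ and consider the $\cL$-definable family, parametrized by $y\in Y$, of subsets
\[
\tilde Z_y \ = \ \Bigl\{(a,x)\in \QQ_p^{n(k+1)}\times \QQ_p^n :\ (x,y)\in Z,\ a^{(i)}\ne 0\ \text{and}\ \sum_{j=0}^{k} a_j^{(i)} x_i^j = 0\ \text{for each } i\Bigr\},
\]
together with the $\cL$-definable family of projections $Z'_y:=\pi_a(\tilde Z_y)\subset \QQ_p^{n(k+1)}$. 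By the definition of $H_k^{\mathrm{poly}}$, for every $x\in Z_y(k,T)$ there exists a nonzero rational tuple $a\in \QQ^{n(k+1)}$ with $H_0(a)\le T$ and $(a,x)\in \tilde Z_y$; clearing denominators block-by-block produces an integer tuple $\tilde a\in \ZZ^{n(k+1)}\setminus\{0\}$ with $H_0(\tilde a)\le T^{k+2}$ and $(\tilde a,x)\in \tilde Z_y$, so $\tilde a\in Z'_y(\ZZ,T^{k+2})$.

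Next, I apply Proposition~\ref{blocks} to the family $Z'$ with parameter $\varepsilon':=\varepsilon/(k+2)$, obtaining an integer $s_0$, a constant $C_0=C_0(Z,\varepsilon,k)$, and an $\cL$-definable family of blocks $W$ such that, for each $y\in Y$ and $T'>1$, one has $Z'_y(\ZZ,T')\subset \bigcup_{\sigma\in \cS_0} W_{y,\sigma}$ with $|\cS_0|\le C_0\,T'^{\varepsilon'}$. Applied with $T'=T^{k+2}$, this gives $|\cS_0|\le C_0\,T^{\varepsilon}$. For each $\sigma$ I set
\[
V_{y,\sigma}\ :=\ \pi_x\!\bigl(\tilde Z_y\cap (W_{y,\sigma}\times \QQ_p^n)\bigr)\ \subset\ \QQ_p^n,
\]
so that $Z_y(k,T)\subset \bigcup_\sigma V_{y,\sigma}$. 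If $W_{y,\sigma}=\{a\}$ is a singleton, then $V_{y,\sigma}$ consists of at most $k^n$ points (each coordinate being a root of a degree-$\le k$ polynomial), each a singleton block; these account for the $T^{\varepsilon}$ term in the final count. If $W_{y,\sigma}$ has positive dimension it sits by definition in a smooth semialgebraic set $S_{y,\sigma}$ of the same dimension, and $V_{y,\sigma}$ is contained in the semialgebraic set
\[
\tilde V_{y,\sigma}\ :=\ \pi_x\!\Bigl\{(a,x):a\in S_{y,\sigma},\ \sum_{j=0}^{k} a_j^{(i)} x_i^j = 0\ \text{for each } i\Bigr\},
\]
which is semialgebraic by Macintyre's quantifier elimination for $\QQ_p$.

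Finally, I decompose each $V_{y,\sigma}$ into a uniformly bounded $\cL$-definable family of blocks by partitioning according to the local dimension of $\tilde V_{y,\sigma}$ and intersecting with the regular strata $\mathrm{reg}_d(\tilde V_{y,\sigma})$, exactly as in the stratification step inside the proof of Proposition~\ref{blocks}. A positive-dimensional block so produced is a smooth definable set of pure dimension $d$ contained in the smooth pure-dimension-$d$ semialgebraic set $\mathrm{reg}_d(\tilde V_{y,\sigma})$; by dimension matching it is locally open there, so at each of its points $\tilde V_{y,\sigma}$ locally agrees with $V_{y,\sigma}\subset Z_y$, placing the block in $(Z_y)^{\mathrm{alg}}$. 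Defining $V^\varepsilon_y$ as the union of these positive-dimensional blocks then yields $V^\varepsilon_y\subset (Z_y)^{\mathrm{alg}}$ and $\#(Z_y\setminus V^\varepsilon_y)(k,T)\le C(Z,\varepsilon,k)\,T^{\varepsilon}$, since the remaining points all come from singleton blocks, whose number is at most a constant times $|\cS_0|$. The main obstacle is exactly this last structural step: turning the definable sets $V_{y,\sigma}$ into an honest family of blocks with uniform control in $\sigma$, and checking that the positive-dimensional members land in $(Z_y)^{\mathrm{alg}}$; this reuses, essentially verbatim, the cell-decomposition-plus-regular-stratification induction on dimension already carried out in the proof of Proposition~\ref{blocks}.
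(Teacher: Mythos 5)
Your proposal follows the same global strategy as the paper: reduce to integer polynomial heights by clearing denominators (rescaling $\varepsilon$ so that $T^{k+2}$ becomes $T^{\varepsilon}$), encode the annihilating-polynomial coefficients as auxiliary variables, apply Proposition~\ref{blocks} to the family of coefficient tuples, and transport the resulting blocks back to the $x$-space. The genuine difference is in the transport step. The paper works with the purely semialgebraic incidence set $A_{n,k}$ (the $Z$-condition removed) and uses definable choice to extract $k^n$ semialgebraic sections $\Psi_i$ of the finite-to-one projection $\pi_1$; the transported sets are then images $\pi_2(\Psi_i(W_{y,\sigma}))$ of blocks under semialgebraic maps, and the fact that such images decompose into a uniformly bounded number of blocks is precisely the structural property of blocks the paper's remark on \cite{PiSelecta} alludes to (after a semialgebraic cell decomposition of $S_{y,\sigma}$ by the rank of $\pi_2\circ\Psi_i$, openness of $W_{y,\sigma}$ in $S_{y,\sigma}$ gives openness of the image on each constant-rank piece). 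You avoid sections and project $\tilde Z_y\cap(W_{y,\sigma}\times\QQ_p^n)$ directly.

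The place where your route is thinner than you claim is the last paragraph. Your $V_{y,\sigma}$ is a general $\cL$-definable subset of the semialgebraic set $\tilde V_{y,\sigma}$, and these two sets need not have matching local dimension: a piece of $V_{y,\sigma}$ of local dimension $e$ sitting inside a regular stratum of $\tilde V_{y,\sigma}$ of dimension $e'>e$ (e.g. a subanalytic curve inside a smooth semialgebraic surface) is not covered by intersecting with $\mathrm{reg}_d(\tilde V_{y,\sigma})$, and it is not a block. In the proof of Proposition~\ref{blocks} the analogous lower-dimensional leftovers $Z_1,Z_2,Z_3$ are dealt with by re-entering the induction, which re-runs the determinant method and produces \emph{new} lower-dimensional hypersurfaces; that mechanism is not available here, so the decomposition is not ``verbatim'' from that proof. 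The paper's sections circumvent this by keeping the transport entirely inside the ``image of a block under a semialgebraic map'' framework, after which the uniform bound $M$ is cleaner (though the paper is itself brief there). Your reduction to integer heights, the $k^n$ bound on roots per coefficient tuple, and the observation that positive-dimensional blocks inside $Z_y$ land in $(Z_y)^{\mathrm{alg}}$ all match the paper.
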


\begin{proof}
To prove this statement for all $k$ it is obviously enough to prove the similar statement for all $k$ where rational points are replaced by integer
points, that is to say it is enough to work with $Z_{y,\ZZ}(k,T)$ instead of  $Z_y(k,T)$.
For this goal, let us now consider
$$A_{n,k} =\{ (\xi,x,y)\in (\QQ_p^{k+1}\setminus \{0\})^n\times \QQ_p^{n+ \ell} ;  \ \sum_{j=0}^k\xi_{i,j}x_i^j=0,\ i=1,\cdots,n\}$$
and
$$Z_{n,k} =\{ (\xi,x,y)\in A_{n,k}  ; (x,y)\in Z \}.$$

Consider the projection $\pi_1:A_{n,k}\to (\QQ_p^{k+1})^n\times \QQ_p^{ \ell}$
defined by $\pi_1(\xi,x,y)=(\xi,y)$, let us write $U$ for $\pi_1(A_{n,k})$, and let us denote
by $\pi_2: A_{n,k}\to \QQ_p^n\times Y$ the projection defined by $\pi_2(\xi,x,y)=(x,y)$.

By definable choice, and since $\pi_1$ has fibers of size at most $k^n$, there exist
$k^n$ semi-algebraic maps $\Psi_i:U\to A_{n,k}$ which are  sections of $\pi_1$ and
such that the union of the graphs of the $\Psi_i$ equals $A_{n,k}$.
Hence,
\begin{equation*}
Z\subset \bigcup_{i=1}^{k^n} \pi_2(\Psi_i( \pi_1(   Z_{n,k}) ) ) )
\end{equation*}
and thus, by construction, one has for any $y\in Y$ that
\begin{equation}\label{union}
Z_{y,\ZZ}(k,T)\subset \bigcup_{i=1}^{k^n} (\pi_2(\Psi_i ( [ \pi_1(   Z_{n,k})]_y(\ZZ,T) ,y  )  ) )_y.
\end{equation}

Now, given $\varepsilon >0$ and applying Proposition \ref{blocks} to the definable family $\pi_1(Z_{n,k})$ with parameter $y\in Y$, we obtain a family of blocks
$W\subset \pi_1(Z_{n,k})\times \QQ_p^s$, such that for any $T>1$ and any $y\in Y$,
$[\pi_1(Z_{n,k})]_y (\ZZ,T) \subset \bigcup_{\sigma \in {\cal S}_0} W_{y,\sigma},$
for ${\cal S}_0 = {\cal S}_0(Z,\varepsilon,k, T) \subset \QQ_p^s$ of cardinal less than
$CT^{\varepsilon}$ for some $C$.

Since the maps $\pi_2$ and $\Psi_i$ are semi-algebraic, by the definition of blocks, and by dimension theory for $\cL$-definable sets, there exist integers $M=M(Z,\varepsilon,k)$ and $s'$ and a family of blocks $V\subset Z\times \QQ_p^{s'}$ such that any set of the form  $(\pi_2(\Psi_i(W_\sigma)))_{y}$ for any $\sigma\in \QQ_p^s$ and any $y\in Y$, can be written as the union of no more than $M$ blocks of the form $V_{y,\sigma'}$ for $\sigma'\in \QQ_p^{s'}$.
Combining with (\ref{union}) and with the information we have about ${\cal S}_0$, the existence of ${\cal S}$ with the desired properties follows for this $V$ and for any $T>1$, with $C(Z,\varepsilon, k)= M k^n C$. One concludes as for the proof of Proposition \ref{blocks}.
\end{proof}



Finally note that Theorem \ref{algheights} implies in particular  the following rational version of Proposition \ref{blocks}, that
differs only in its last line from Proposition \ref{blocks}.

\begin{thm}\label{blocksQ}
Let $Z\subset \QQ_p^{n+ \ell}$ an ${\cal L}$-definable family of ${\cal L}$-definable subsets of $\QQ_p^n$ parametrized
by a definable set $Y\subset \QQ_p^{ \ell}$. Let $\varepsilon >0$. There exist $s=s(\varepsilon,n)\in \NN$, a constant
$C(Z,\varepsilon)$ and a
family of blocks $W\subset \QQ_p^{n+ \ell}\times \QQ_p^s$
such that for any $y\in Y$, for any $T>1$
$$Z_y(\QQ,T) \subset \bigcup_{\sigma \in {\cal S}} W_{y,\sigma},$$
for ${\cal S}={\cal S}(Z,\varepsilon, T) \subset \QQ_p^s$ of cardinal less than $C(Z,\varepsilon)T^{\varepsilon}$. In particular, for all
$y\in Y$, denoting  $W_y^\varepsilon$ the union over $\sigma\in \QQ_p^s$
of the $W_{y,\sigma}$ of dimension $>0$, one has $W_y^\varepsilon\subset (Z_y)^\mathrm{alg}$ and
$$ \# (Z_y\setminus W_y^\varepsilon)(\QQ,T)\le C(Z,\varepsilon) T^{\varepsilon}. $$
\end{thm}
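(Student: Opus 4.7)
The plan is to obtain Theorem \ref{blocksQ} as a direct specialization of Theorem \ref{algheights} to $k = 1$. The only thing to verify is the arithmetic identity $Z_y(\QQ, T) = Z_y(1, T)$; once this is in place, every piece of data supplied by Theorem \ref{algheights}---the integer $s(\varepsilon, n)$, the constant $C(Z, \varepsilon) := C(Z, \varepsilon, 1)$, the family of blocks $W := V$, the exceptional set ${\cal S}$, and the bound $\#{\cal S} \leq C(Z, \varepsilon) T^\varepsilon$---transfers without modification, and the final assertion $W_y^\varepsilon \subset (Z_y)^{\mathrm{alg}}$ together with the counting bound follows at once from the corresponding statement about $V_y^\varepsilon$.

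To justify $Z_y(\QQ, T) = Z_y(1, T)$ I would argue coordinatewise. For $x \in \QQ$ written in lowest terms as $p/q$ with $q \geq 1$ (and $x = 0$ treated as $0/1$), the polynomial $qX - p$ witnesses $H_1^{\mathrm{poly}}(x) \leq \max(|p|_\RR, q)$; conversely, any nonzero integer pair $(a_0, a_1)$ with $a_0 + a_1 x = 0$ must satisfy $a_1 \neq 0$, and $(a_0, a_1)$ is an integer multiple of $(-p, q)$, so $|a_0|_\RR \geq |p|_\RR$ and $|a_1|_\RR \geq q$. Hence $H_1^{\mathrm{poly}}(x) = \max(|p|_\RR, q)$. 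On the other hand, the condition $x \in X(\QQ, T)$ means that one can write $x = a/b$ with $b \neq 0$ and $\max(|a|_\RR, |b|_\RR) \leq T$, and since cancelling the common factor of $a$ and $b$ can only decrease the real norms, this is equivalent to $\max(|p|_\RR, q) \leq T$, i.e.\ to $H_1^{\mathrm{poly}}(x) \leq T$. Taking the maximum over the $n$ coordinates then gives $Z_y(\QQ, T) = Z_y(1, T)$, as required.

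Since the deduction is formal, there is no genuine obstacle. The one delicate arithmetic point is the equivalence of heights just above, which rests entirely on the elementary observation that reducing a rational $a/b$ to lowest terms cannot enlarge $\max(|a|_\RR, |b|_\RR)$; all the uniformity (in $y$, in $T$, and the form of the dependence $s = s(\varepsilon, n)$) is inherited verbatim from Theorem \ref{algheights}.
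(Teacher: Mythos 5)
Your overall strategy is precisely what the paper does: Theorem \ref{blocksQ} is deduced by specializing Theorem \ref{algheights} to $k=1$, and the paper itself says only that Theorem \ref{algheights} ``implies in particular'' the result. Your identification of the data to carry over ($W := V$, $C(Z,\varepsilon) := C(Z,\varepsilon,1)$, etc.) is correct.

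However, the arithmetic identity $Z_y(\QQ,T) = Z_y(1,T)$ that you try to establish is \emph{false} as an equality, and the ``converse'' half of your argument contains a genuine error: you quantify over ``any nonzero \emph{integer} pair $(a_0,a_1)$,'' but in the definition used for $Z_y(k,T)$ the height $H_k^{\mathrm{poly}}$ is the infimum over \emph{rational} tuples $a \in \QQ^{k+1}\setminus\{0\}$ (the integer variant is $H_{k,\ZZ}^{\mathrm{poly}}$, a separate notion the paper also introduces). With rational coefficients one can beat $\max(|p|_\RR,q)$: for $x=6$ the tuple $(a_0,a_1) = (-3, 1/2)$ satisfies $a_0 + a_1 x = 0$ and $H_0((-3,1/2)) = \max(3,2) = 3$, so $H_1^{\mathrm{poly}}(6) \leq 3$ while $\max(|p|_\RR,q) = 6$. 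Consequently $6 \in Z_y(1,3)$ yet $6 \notin Z_y(\QQ,3)$, so the sets are not equal. What you actually need---and what the first half of your argument, the witness $qX - p$, correctly proves---is only the one-sided inclusion $Z_y(\QQ,T) \subset Z_y(1,T)$: if $x_i = p_i/q_i$ in lowest terms with $\max(|p_i|_\RR,q_i) \le T$, then $H_1^{\mathrm{poly}}(x_i) \le T$. Since the conclusion of Theorem \ref{algheights} (a covering of $Z_y(1,T)$ by few blocks, and the counting bound) is monotone under passing to a subset, the inclusion alone suffices. So the deduction does go through, but you should drop the false equality claim and the ``converse'' paragraph, and state only the inclusion.
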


\section{A geometric analogue of results of Bombieri-Pila \cite{BP} and Pila \cite{pila_ast}}\label{sec:4}
\subsection{}\label{nr} In this section we shall work over the field $K = \CC\llp t \rrp$. Note however that all our results remain valid with identical proofs when $\CC$
is replaced by  any algebraically closed field of characteristic zero.

For each positive integer $r$ we denote by
$\mathbb{C} [t]_{< r}$ the  set of complex polynomials of degree
$<r$.
Let $A$ be a subset of
$\mathbb{C} \llp t \rrp^n$.
We denote by $A_r$ the set
$A \cap (\mathbb{C} [t]_{< r})^n$ and
by $n_r(A)$ the dimension of the Zariski closure of
$A_r$ in
$(\mathbb{C} [t]_{< r})^n \simeq \mathbb{C}^{nr}$.
Similarly, when $X$ is an algebraic subvariety of
$\mathbb{A}^n_{\mathbb{C} \llp t \rrp}$,
we shall write $X_r$ for  $(X(\mathbb{C} \llp t \rrp)_r$ and
$n_r(X)$ for  $n_r(X(\mathbb{C} \llp t \rrp)$.

We have the following basic estimate, which is the best possible when $X$ is linear:
\begin{lem}\label{trivest}Let $X$ be an algebraic subvariety of
$\mathbb{A}^n_{\mathbb{C} \llp t \rrp}$ of dimension $m$.
Then, for any $r >0$,
\[
n_r (X) \leq r m.
\]
\end{lem}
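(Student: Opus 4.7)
The plan is to argue by induction on $m$, keyed to a Noether-normalization-type linear projection whose matrix is defined over $\mathbb{C}$ rather than over $K=\mathbb{C}\llp t \rrp$. The base case $m=0$ is trivial: $X(K)$ is then a finite set and so is $X_r$, giving $n_r(X)=0$. For $m\ge 1$, by passing to irreducible components I may assume $X$ is irreducible of dimension $m$.

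The crucial preliminary is that $\mathbb{C}^N$ is Zariski dense in $\mathbb{A}^N_K$ for every $N$: since $\mathbb{C}$ is infinite, any polynomial in $K[a_1,\dots,a_N]$ vanishing on all of $\mathbb{C}^N$ must be identically zero. Applied to the affine space $\mathbb{A}^{mn}_K$ of linear projections $\pi:\mathbb{A}^n\to\mathbb{A}^m$, together with the nonempty Zariski-open condition ``$\pi|_X$ is dominant and generically finite'', this produces such a $\pi$ whose matrix lies in $\mathrm{Mat}_{m\times n}(\mathbb{C})$. Acting coefficient-wise on polynomials, $\pi$ then induces a $\mathbb{C}$-linear morphism $(\mathbb{C}[t]_{<r})^n\to(\mathbb{C}[t]_{<r})^m$, i.e., $\mathbb{C}^{rn}\to\mathbb{C}^{rm}$, which I again denote by $\pi$.

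Next I would verify that $X_r$ is Zariski closed in $\mathbb{C}^{rn}$: after clearing $t$-denominators, pick defining equations $f_1,\dots,f_s\in\mathbb{C}\llb t\rrb[x]$ for $X$; for $p\in(\mathbb{C}[t]_{<r})^n$, the condition $f_j(p)=0$ in $K$ is equivalent to the vanishing of every Taylor coefficient of $f_j(p)\in\mathbb{C}\llb t\rrb$, and each such coefficient is a polynomial over $\mathbb{C}$ in the coefficients of the $p_i$, so Hilbert's basis theorem finishes the verification. Now decompose $X=W\sqcup E$, where $W$ is the dense open subvariety on which $\pi|_X$ has finite fibers of size at most $d=\deg(\pi|_X)$ and $E$ is a closed subvariety of dimension $<m$; the inductive hypothesis gives $n_r(E)\le r(m-1)$. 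The set $W_r=X_r\setminus E_r$ is constructible in $\mathbb{C}^{rn}$, and the fiber of $\pi|_{W_r}:W_r\to\mathbb{C}^{rm}$ over $y\in(\mathbb{C}[t]_{<r})^m$ is contained in the finite set $\pi^{-1}(y)\cap W(K)$. Decomposing $W_r$ into finitely many locally closed irreducible pieces and applying the fiber-dimension theorem to each resulting morphism into $\mathbb{C}^{rm}$ gives $\dim W_r\le rm$, whence $n_r(X)=\dim X_r=\max(\dim W_r,\dim E_r)\le rm$.

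The main technical point is ensuring that $\pi$ can be taken defined over $\mathbb{C}$: only then does it restrict to an algebraic morphism $\mathbb{C}^{rn}\to\mathbb{C}^{rm}$ on which the fiber-dimension theorem applies. A purely $K$-linear Noether normalization would carry tuples of polynomials of degree $<r$ out of $(\mathbb{C}[t]_{<r})^m$ and so would not preserve the ambient $\mathbb{C}$-algebraic structure required for the finite-fiber bound.
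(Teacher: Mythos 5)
Your proof is correct, and the underlying idea is the same as the paper's: choose a linear projection $\pi:\mathbb{A}^n\to\mathbb{A}^m$ whose matrix has entries in $\mathbb{C}$ (using density of $\mathbb{C}$-points in the parameter space of projections), so that it simultaneously acts on $X$ over $K$ and, coefficientwise, as a polynomial map $\mathbb{C}^{rn}\to\mathbb{C}^{rm}$, and then bound $\dim X_r$ by the fiber-dimension theorem. The one substantive difference is how the projection is used. The paper invokes a $\mathbb{C}$-linear Noether normalization, i.e.\ a projection that has \emph{finite fibers everywhere} on $X$ (this is again a generic, hence $\mathbb{C}$-achievable, condition), and immediately concludes $\dim X_r\le rm$ in a single step with no induction. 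You instead ask only that $\pi|_X$ be dominant and generically finite, split $X$ into the open locus $W$ of finite fibers and a lower-dimensional closed complement $E$, apply the fiber-dimension bound to $W_r$, and invoke the inductive hypothesis for $E_r$. Both routes are valid; yours trades a stronger existence statement (finite projection defined over $\mathbb{C}$) for a short induction on $m$, and along the way you supply details the paper leaves implicit (that $X_r$ is closed, not merely constructible, and that the fibers of $\pi_r$ over $\mathbb{C}$-polynomial points are controlled by the $K$-fibers of $\pi$). You also correctly identify the key structural point the paper's brevity obscures: it is essential that the projection be defined over $\mathbb{C}$, since a generic $K$-linear projection would not preserve $(\mathbb{C}[t]_{<r})^n$ and the whole argument would collapse.
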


\begin{proof}Up to a $\mathbb{C}$-linear coordinate change, there is a coordinate projection
$$
p:\mathbb{A}^n_{\mathbb{C} \llp t \rrp}\to \mathbb{A}^m_{\mathbb{C} \llp t \rrp}
$$
whose restriction to $X$ has finite fibers. The projection $p$ induces a map
$p_r: (\mathbb{C} [t]_{< r})^n \to (\mathbb{C} [t]_{< r})^m$. Since $X_r$ is a constructible subset of $(\mathbb{C} [t]_{< r})^n \simeq \mathbb{C}^{nr}$, and $p_r$ has finite fibers on $X_r$, the estimate follows.
\end{proof}

\begin{cor}\label{trivcor}If $m < n$, $(\mathbb{A}^n_{\mathbb{C} \llp t \rrp} \setminus X)_1$
is nonempty.
\end{cor}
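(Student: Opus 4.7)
The plan is to apply Lemma \ref{trivest} directly in the case $r=1$. Since $\mathbb{C}[t]_{<1}=\mathbb{C}$, we have $(\mathbb{C}[t]_{<1})^n\simeq \mathbb{C}^n$, and by definition $X_1 = X(\mathbb{C}\llp t\rrp)\cap \mathbb{C}^n$ and $(\mathbb{A}^n_{\mathbb{C}\llp t\rrp}\setminus X)_1 = \mathbb{C}^n\setminus X_1$. So what we must show is that $X_1$ does not exhaust $\mathbb{C}^n$.

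By Lemma \ref{trivest} applied with $r=1$, the Zariski closure of $X_1$ in $(\mathbb{C}[t]_{<1})^n \simeq \mathbb{C}^n$ has dimension
\[
n_1(X)\leq m < n.
\]
In particular this closure is a proper closed subvariety of $\mathbb{C}^n$, hence $X_1$ itself is a proper subset of $\mathbb{C}^n$, and thus $\mathbb{C}^n\setminus X_1$ is nonempty, which is exactly the desired conclusion.

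There is no real obstacle here; the statement is a clean specialization of the previous lemma, and the only point to verify carefully is the identification $(\mathbb{A}^n_{\mathbb{C}\llp t\rrp}\setminus X)_1 = \mathbb{C}^n\setminus X_1$, which follows immediately from the definition of $(\cdot)_1$ given in \ref{nr}.
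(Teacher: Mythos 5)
Your proof is correct and is precisely the argument the corollary is meant to encode; the paper omits an explicit proof, treating it as an immediate consequence of Lemma \ref{trivest} with $r=1$ in exactly the way you have spelled out. The identification $(\mathbb{A}^n_{\mathbb{C}\llp t\rrp}\setminus X)_1 = \mathbb{C}^n\setminus X_1$ and the dimension count $n_1(X)\leq m<n$ are all that is needed, and you have checked both.
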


The following result shows the basic bound can be improved, as soon
as $X$ is not a linear subspace. By the degree of an irreducible affine variety $X$ over a field $k$ we mean the number of intersection points
when intersecting $X\otimes \bar k$ with a generic affine space over $\bar k$ of dimension equal to the codimension of $X$, for some algebraic closure $\bar k$ of $k$. The improved bound then reads as the trivial bound for $(m-1)$-dimensional varieties plus $r/d$, rounded up.

\begin{thm}\label{motivicBP} Let $X$ be an irreducible subvariety of
$\mathbb{A}^n_{\mathbb{C} \llp t \rrp}$ of dimension $m$ and degree $d$. 
Then,
for every positive integer $r$, one has
$$
n_r (X) \leq r (m - 1)  + \Bigl\lceil\frac{r}{d}\Bigr\rceil.
$$
\end{thm}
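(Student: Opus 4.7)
The plan is to follow Pila \cite{pila_ast} via Marmon's approach \cite{Marmon}, using the Yomdin-Gromov parametrization (Theorem \ref{GYT}) in place of Bombieri-Pila analysis. The proof naturally separates into a geometric reduction to plane curves and the plane curve case itself.

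For the reduction, I would use an iterated generic linear projection as in Pila \cite{pila_ast}. A generic projection $\pi : X \to \AA^{m-1}_K$ has generic fiber an irreducible curve of degree $d$ (by B\'ezout) lying in an affine subspace $\AA^{n-m+1}_K$; a further generic linear projection is birational onto a plane curve of degree $d$, and is bijective on polynomial points away from a closed lower-dimensional locus. The induced map $\pi_r : X_r \to (\AA^{m-1})_r \cong \CC^{(m-1)r}$ then has generic fibers bounded by $n_r$ of a plane curve of degree $d$, while the non-generic locus (where fibers become reducible, higher-dimensional, or of smaller degree) lies over a proper Zariski-closed subset of $\CC^{(m-1)r}$ and is handled by a standard constructible analysis. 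Granting the plane curve estimate $n_r(C) \leq \lceil r/d \rceil$, the theorem on the dimension of fibers yields
$$
n_r(X) \leq (m-1)r + \Bigl\lceil \frac{r}{d} \Bigr\rceil.
$$

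For the plane curve case, let $C \subset \AA^2_K$ be irreducible of degree $d$. Since polynomials of degree less than $r$ lie in $\cO_L$, one has $C_r = (C \cap \cO_L^2)_r$. Apply Theorem \ref{GYT} with $L = \CC\llp t\rrp$, $m=1$, $n=2$, and the parametrization degree $r'$ chosen as in the proof of Lemma \ref{basicest} (so that the auxiliary monomial count $\mu = D_2(d)$ fits the hypothesis $D_1(r'-1) \leq \mu < D_1(r')$): since the residue field is $\CC$ of characteristic zero, the rings $L_N$ coincide with $\CC$, and one obtains a definable $\CC^N$-family of $T_{r'}$-parametrizations $g_i : P_i \subset \cO_L \to C \cap \cO_L^2$ covering $C \cap \cO_L^2$. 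Working with a single $g$ in the family, I would form the $\mu$ monomial compositions $g^\alpha$ for $\alpha \in \Delta_2(d)$, which are $T_{r'}$ by Lemma \ref{prod}, and apply Lemma \ref{detest} on sufficiently small sub-boxes $B \subset P$ of valuative radius $\rho$. For any $\mu$-tuple of polynomial points $g(u_j) \in C_r$ with $u_j \in B$, each entry $g^\alpha(u_j)$ is a polynomial in $t$ of degree at most $d(r-1)$, so the determinant $\Delta = \det(g^\alpha(u_j))$ is itself a polynomial in $t$ of bounded degree; hence if $\Delta$ is nonzero its $t$-adic norm is bounded below by a fixed negative power of $|t|$, while Lemma \ref{detest} bounds $|\Delta| \leq |\rho|^e$ from above. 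Choosing $\rho$ small enough forces $\Delta = 0$ identically on each sub-box, and aggregating these vanishings (together with a combinatorial bookkeeping of the exponent $e$, specialized to $m=1$, $n=2$, auxiliary degree $d$, as in Pila \cite{qjm}, p.~212) cuts the Zariski closure of $C_r$ down to dimension $\lceil r/d \rceil$.

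The main obstacle is the passage from the $p$-adic point-counting statement of Lemma \ref{basicest} to a genuine Krull-dimension bound on the constructible set $C_r \subset \CC^{2r}$: in the $p$-adic case, one covers finitely many integer points of bounded height by a controlled number of auxiliary hypersurfaces, whereas in the motivic case there is no finite point set to cover, and the determinant vanishings over sub-boxes must instead be organized (together with the absorption of the $\CC^N$-parameter coming from Theorem \ref{GYT}) so as to produce algebraic constraints whose cumulative effect pins $C_r$ down inside a subvariety of $\CC^{2r}$ of the required dimension. The precise matching of the $p$-adic exponent $1/d$ with the motivic gain $\lceil r/d \rceil$ is exactly the combinatorial identity used in Pila \cite{qjm}, specialized to the present setting.
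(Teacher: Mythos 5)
Your reduction to plane curves via Lang--Weil-type projections is in the spirit of the paper's reduction, and citing Marmon is the right idea, but the plane-curve argument as you sketch it has two genuine gaps.

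First, your choice of auxiliary monomials is the wrong one for the target exponent. Taking $\mu = D_2(d)$ and $r'$ from the inequality $D_1(r'-1)\le\mu<D_1(r')$ is the set-up of Lemma~\ref{basicest}, which is designed to give the Pila--Wilkie $T^{\varepsilon}$ bound by letting $d\to\infty$; it is not calibrated to give the sharp $\lceil r/d\rceil$ exponent of Bombieri--Pila for a curve of \emph{fixed} degree $d$. What Marmon's method (and the paper's proof) actually does is quite different: one passes to the projective closure, forms the leading-term ideal $\LT(I)$ for the reverse graded lexicographic order, and takes for the monomial set $M(\delta)$ the $\LT(I)$-free monomials of degree exactly $\delta$, so that $\mu=\#M(\delta)=H_I(\delta)$ is a Hilbert-function value. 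The exponent computation then rests on the asymptotics $\mu=d\delta+O_d(1)$, $\sigma_i=a_{I,i}d\delta^2+O_d(\delta)$, $e=\tfrac{d^2}{2}\delta^2+O_d(\delta)$ as $\delta\to\infty$, combined with Salberger's inequality $a_{I,1}+a_{I,2}\le \tfrac12$ (Lemma~\ref{grevlex}), to get $(\sigma_1+\sigma_2)/e\le 1/d+O_d(\delta^{-1})$ and finally $\alpha\le\lceil r/d\rceil$. The $\LT(I)$-free condition is also what guarantees the auxiliary polynomial built from the vanishing determinant does not lie in $I$ and hence cuts $X$ properly; with your choice of all monomials of degree $\le d$ this is not automatic. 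None of this machinery appears in your sketch, and without it the ``combinatorial identity'' you gesture at simply does not produce the exponent $1/d$.

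Second, the passage to a Krull-dimension bound on $X_r\subset\CC^{2r}$, which you correctly identify as the main obstacle, is left entirely unresolved. The paper's solution is concrete: Remark~\ref{rem:inj} lets one arrange that the projection $\CC^s\times\cO_K\to\cO_K$ is finite-to-one on the parametrizing cell $Y$, whence the truncation $p:\CC^s\times\cO_K\to\CC^{s+\alpha}$ satisfies $\dim p(Y)\le\alpha$ (inequality~(\ref{dimpY})). One then forms the incidence set $\widehat X\subset p(Y)\times X_r$, shows it is constructible via the quantifier-elimination Proposition~\ref{cons}, and observes that it surjects onto $X_r$ while having finite fibers over $p(Y)$ (bounded by B\'ezout via the auxiliary polynomial $H$ of degree $\le\delta$). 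That chain $n_r(X')\le\dim\widehat X\le\dim p(Y)\le\alpha\le\lceil r/d\rceil$ is the real content; ``absorbing the $\CC^N$-parameter'' as you phrase it does not substitute for it.
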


\begin{remark}\label{notonlyC}As already mentioned, Theorem \ref{motivicBP} remains valid with an identical proof  when  one replaces $\CC$ by any algebraically closed field of characteristic zero.
\end{remark}

\begin{remark}For the plane curve $X$ given by $y=x^d$, for every positive integer $r$, one has the equality
$$
n_r(X)= \Bigl\lceil\frac{r}{d}\Bigr\rceil.
$$
Thus, taking the product of $X$ with an affine space of dimension $m - 1$, one sees that the upper bound given in  Theorem \ref{motivicBP} is optimal for any value of $m$, $d$ and $r$.
\end{remark}

\subsection{} By simple projection and section arguments \`a la Lang-Weil, one reduces, completely analogously as in
\cite{pila_ast}, to the case of plane curves ($n = 2$ and $m = 1$).
For the sake of completeness let us provide some more details.
\begin{proof}[Reduction to the case $n = 2$ and $m = 1$ of Theorem \ref{motivicBP}]
Assume first $m = 1$ and $n > 2$.
Linear projections $\pi : \AA^n \to \AA^2$
are written in coordinates   as $x = \sum_{i = 1}^n a_i x_i$,
$y = \sum_{i = 1}^n b_i x_i$. For the $a_i$'s and $b_i$'s in a dense open subset $O$ of
$\AA^{2n}$, $\pi$ is surjective and $X$ and $\pi (X)$ have the same degree.
By Corollary \ref{trivcor}, $O (\mathbb{C}) := O(K)_1$ is non empty.
Thus take $\pi$ corresponding to some point in $O (\mathbb{C})$.
The number of points in the fibers
of
$\pi : X \to \Gamma = \pi (X)$
is finite and
$\pi (X_r)$ is contained in $\Gamma_{r}$, thus
the statement for $X$ follows from the one for $\Gamma$.
Now assume $m > 1$. By a similar argument, after projecting, one may assume $n = m +1$.
In the linear space   of hyperplanes
$H$ with equations $\sum_{i = 1}^n \alpha_i x_i = b$, $H \cap X$ is irreducible of degree $d$ outside
a closed subset $E$ of positive codimension.
Thus, by Corollary \ref{trivcor},
for some $\alpha_i$, $1 \leq i \leq n$ and $b_0$, all  in $\mathbb{C}$,
 the corresponding $H$ is not in $E$.
 Consider the pencil $H_b$ of hyperplanes
 $\sum_{i = 1}^n \alpha_i x_i = b$,
 $b \in \CC \llp t \rrp$. Since $H_{b_0}$ is not in $E$, the pencil intersects $E$ in at most $e$ points $b_j$
 such that
 $H_{b_j}$ lies in $E$. If $H_{b_j} = X$ for some $b_j$ we are done, so we may assume that,
 for such a $b_j$, $X \cap H_{b_j}$ is of dimension $\leq m - 1$, thus
 $n_r (X \cap H_{b_j}) \leq  (m - 1) r$ by the trivial estimate.
 For the other $b$'s one may apply the induction hypothesis, which gives
 $n_r(X\cap H_b)\leq  r (m - 2)  + \lceil\frac{r}{d}\rceil$.
 Since the linear form $\sum_{i = 1}^n \alpha_i x_i $ induces a constructible mapping
 $X_r \to \mathbb{C} [t]_{< r}$,
the statement follows by additivity of dimensions.
\end{proof}

\subsection{Hilbert functions}\label{hil}
Let $K$ be a field.
For $s \in \mathbb{N}$, we denote by
$K [x_0, \dots, x_n]_s$ the vector space of homogeneous polynomials of degree $s$.
Thus $K [x_0, \dots, x_n]_s$ is of dimension $L_n (s)$ over $K$.
Let $I $ be a homogeneous ideal of  $K [x_0, \dots, x_n]$ and set $I_s = I \cap K [x_0, \dots, x_n]_s$.
We set $H_I (s) = \dim K [x_0, \dots, x_n]_s /I_s$.
It is the Hilbert function of $I$.

Let $<$ be a monomial ordering on $K [x_0, \dots, x_n]$ in the sense of \cite{cox} Def 1, Ch. 1.2. Denote
by $\LT (I)$ the ideal generated by the leading terms for the ordering $<$ of elements of $I$, where the leading term of a homogeneous polynomial $f = \sum_i a_i x^i$ is the term among the nonzero $a_ix^i$ which is maximal for the ordering.
By \cite{cox} Prop 9, Ch. 9.3, $I$ and $\LT (I)$ have the same Hilbert function.

For each $0 \leq i \leq n$, set
$$\sigma_{I, i} (s) = \sum_{\alpha \in \Lambda_{n + 1} (s); x^{\alpha}\notin \LT (I)} \alpha_i.$$
Thus, $s H_I (s) = \sum_i \sigma_{I, i} (s)$.

Let $X$ be an irreducible subvariety of dimension $m$ and degree $d$ of $\mathbb{P}^n_K$
defined by a homogeneous ideal $I$.
Then, for $s$ large enough,
$H_I (s)$ is equal to $P_X (s)$ with
$P_X$ the Hilbert polynomial of $X$.
It is a polynomial of degree $m$, leading coefficient $d / m!$ and coefficients bounded in terms of $n$ and the degrees of generators of $I$.
As explained in \cite{broberg} and \cite{Marmon}, it follows there exist non-negative real numbers $a_{I, i}$, $i = 0, \dots, n$, such that
$$
\frac{\sigma_{I, i} (s)}{s H_I (s)} = a_{I, i} + O_{n, d} (1/ s)
$$
as $s \to \infty$.
Note that
$$
a_{I,0} + \cdots + a_{I, n} = 1.
$$

We shall need the following lemma of Salberger for $n=2$ and $m=1$.

\begin{lem}[Lemma 1.12 from \cite{sal}]\label{grevlex}Let $X$ be a closed equidimensional subscheme of dimension $m$
of $\mathbb{P}^n_K$. Assume $X$ intersects properly the hyperplane
$x_0 = 0$, that is,  no irreducible component of $X$ is contained in $x_0 = 0$. Let $<$ be the
monomial ordering
defined as follows:
$\alpha < \beta$ if $\vert \alpha \vert < \vert \beta \vert$
or if $\vert \alpha \vert = \vert \beta \vert$ and for some $i$,
$\alpha_i > \beta_i$ and $\alpha_j = \beta_j$, for $j < i$.
(That is, after reindexing the coordinates, $<$ is the
reverse graded lexicographic order.)
Then
$$
a_{I,1} + \cdots + a_{I, n} \leq \frac{m}{m + 1}.
$$
\end{lem}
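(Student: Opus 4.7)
The plan is to exploit the key feature of the reverse graded lexicographic order used here, namely that its smallest variable $x_0$ interacts very transparently with the passage to the initial ideal: one has the well-known identity
\[
\LT(I):x_0^{\infty} = \LT(I:x_0^{\infty}).
\]
Since $a_{I,0}+\cdots+a_{I,n}=1$, the inequality to prove is equivalent to $a_{I,0}\geq 1/(m+1)$, and this is what I would focus on. The proper intersection hypothesis says that no minimal associated prime of $I$ contains $x_0$. Replacing $I$ by $I:x_0^{\infty}$ removes only embedded components, which alter the Hilbert polynomial and the sums $\sigma_{I,i}$ by lower order terms and hence do not affect any $a_{I,i}$. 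I may therefore assume that $x_0$ is a non-zero-divisor on $K[x_0,\ldots,x_n]/I$, and the displayed identity then gives $\LT(I):x_0=\LT(I)$, i.e., $x^{\alpha}\notin\LT(I)\Rightarrow x_0 x^{\alpha}\notin\LT(I)$.

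Writing $M_s$ for the set of monomials of degree $s$ lying outside $\LT(I)$, and $N_{s,k}\subset M_s$ for the ones of $x_0$-degree exactly $k$, the previous implication shows that multiplication by $x_0$ identifies $N_{s,k}$ bijectively with $N_{s+1,k+1}$. Iterating yields $|N_{s,k}|=|N_{s-k,0}|$ for $0\le k\le s$, so
\[
H_I(s) = \sum_{t=0}^{s} |N_{t,0}|, \qquad \sigma_{I,0}(s) = \sum_{k=0}^{s} k\,|N_{s,k}| = \sum_{t=0}^{s} (s-t)\,|N_{t,0}|.
\]
In particular $|N_{s,0}|$ is the first difference of $H_I$, and since in high degree $H_I(s)$ equals the Hilbert polynomial $P_X(s)$, which is of degree $m$ with leading coefficient $d/m!$, one has $|N_{s,0}|\sim d\,s^{m-1}/(m-1)!$.

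A routine asymptotic evaluation of the sum above (the discrete counterpart of $\int_0^s(s-t)t^{m-1}\,dt = s^{m+1}/(m(m+1))$) then gives
\[
\sigma_{I,0}(s)\sim \frac{d\,s^{m+1}}{(m+1)!}, \qquad sH_I(s)\sim \frac{d\,s^{m+1}}{m!},
\]
whence $a_{I,0}=1/(m+1)$, which is the desired bound. The step I expect to be the main obstacle is the initial reduction: the identity $\LT(I):x_0^{\infty} = \LT(I:x_0^{\infty})$ is a non-trivial and rather special feature of graded reverse lex orders (and fails for other graded orders), and one has to verify carefully that the passage from $I$ to its $x_0$-saturation really does not change the limits $a_{I,i}$, which is where the equidimensionality and proper intersection hypotheses are essentially used. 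Once that is in place, the combinatorial-asymptotic portion is straightforward.
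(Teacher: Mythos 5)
The paper does not reproduce a proof of this lemma; it is cited as Lemma~1.12 of Salberger \cite{sal}, so there is no in-paper argument against which to compare. Your proof is correct, and it follows what is, in substance, the standard argument in the literature (Salberger \cite{sal}, and earlier Heath-Brown \cite{HB} and Broberg \cite{broberg}): the reverse graded lexicographic order is chosen precisely because $\LT(I):x_0^{\infty}=\LT(I:x_0^{\infty})$, and after reducing to the case where $x_0$ is a non-zero-divisor modulo $I$ the shift bijection $N_{s,k}\cong N_{s-k,0}$ turns $\sigma_{I,0}(s)=\sum_{t\le s}(s-t)\lvert N_{t,0}\rvert$ into a discrete convolution whose leading term is $d\,s^{m+1}/(m+1)!$, giving $a_{I,0}=1/(m+1)$. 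Two small remarks. First, your phrase ``removes only embedded components'' is slightly imprecise: passing to $I:x_0^{\infty}$ kills only those associated primes that contain $x_0$; but since $X$ is equidimensional and intersects $x_0=0$ properly, every such prime is embedded and has dimension $<m$, so the torsion module contributes $O(s^{m-1})$ to $H_I$ and $O(s^{m})$ to each $\sigma_{I,i}$, which is negligible against the $\Theta(s^{m+1})$ main terms --- this is the verification you anticipated would be needed, and it does go through. Second, your argument in fact establishes the \emph{equality} $a_{I,0}=1/(m+1)$, which is sharper than the inequality asserted in the statement.
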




\subsection{Proof of Theorem \ref{motivicBP} when $n = 2$ and $m = 1$}
Let  $K = \mathbb{C} \llp t \rrp$ and $X$ be an irreducible curve in
$\mathbb{A}^2_K$
of degree $d$.
Consider the map
$$
\iota : \begin{cases} K^2\to K^3 \\ (x,y)\mapsto (1,x,y) \end{cases}
$$
and the corresponding embedding
$$
\underline\iota :  \begin{cases} \mathbb{A}^2_{K} \longhookrightarrow
\mathbb{P}^2_{K} \\ (x, y) \mapsto [1: x: y] \end{cases}
$$
and let $I$ denote the homogeneous ideal of the closure of
$\underline\iota (X)$ in $\mathbb{P}^2_{K}$.
Let us form the leading term ideal $\LT (I)$ of $I$ for the monomial ordering $<$ of Lemma \ref{grevlex} for $n=2$.
Let $r\geq 0$ be given.
Fix a positive integer $\delta$.
Set
$$
M (\delta) = \Bigl\{j \in \mathbb{N}^{3} ; \vert j \vert = \delta, x^j\notin \LT (I) \Bigr\}.
$$
Set
$\mu = \# M (\delta) = H_I (\delta)$,  $\sigma_i = \sigma_{I,i} (\delta)$ for $i=0,1,2$, and $e =  (\mu-1)\mu/2$.
Let us write
$X' = X(K) \cap \cO_K^2$.
By Theorem \ref{GYT},
there exists a surjective  $\LPas^K$-definable function
$$
g:Y \subset \CC^s \times \cO_K  \to X'
$$
for some integer $s \geq 0$
such that for each
$\xi \in \CC^s$,
$g_\xi$ satisfies $T_{\mu}$ on $Y_\xi$.
Fix an integer $\alpha \geq 0$, and
let $B_\alpha$ be a closed ball of valuative radius $\alpha$ in
$\cO_K$.
Fix $\xi\in \CC^s$ and, for any choice of points $y_i$ for $1 \leq i \leq \mu$ in $(g_\xi (B_\alpha \cap Y_\xi))_r$,
consider the determinant
$$
\Delta = \det \Bigl(\iota(y_i)^j\Bigr)_{j \in M (\delta), 1 \leq i \leq \mu}.
$$
By Lemma \ref{detest} for $m=1$ and $n=2$ and Lemma \ref{prod},
we get that
\begin{equation}\label{upm}
\ord_t (\Delta) \geq  \alpha \cdot e.
\end{equation}
On the other hand, recall that $x \in \CC [t]$ belongs to $ \CC [t]_{<r}$ if and only if
$\deg (x) < r$, where $\deg$ stands for the degree in $t$,  and hence,
$$
\deg (\Delta)  \leq   (r-1) (\sigma_1 + \sigma_2).
$$
Thus, if $\Delta \not= 0$,
\begin{equation}\label{downm}
\ord_t (\Delta)  \leq    (r-1) (\sigma_1 + \sigma_2).
\end{equation}
By putting together (\ref{upm}) and (\ref{downm}),
it follows that if
\begin{equation}\label{boundonrhom}
\alpha >  (r-1) (\sigma_1  +\sigma_2)/e,
\end{equation}
then $\Delta = 0$.
For such an $\alpha$,
note that the matrix
$$
 A = \Bigl(y_i^j\Bigr)
$$
with $j$ running over $ M (\delta)$ and $y_i$ in $g_\xi (B_\alpha \cap Y_\xi)_r$ for $i=1,\ldots,\mu$,
has rank $\leq \mu - 1$.
Hence, by the arguments in the proof of Lemma 1 in \cite{BP} (which are quite similar to those from the proof of Lemma \ref{basicest}), there exists a nonzero polynomial $H$ in two variables with coefficients in $\CC [t]$ and exponents
in $M (\delta)$ which vanishes at all the $y_i$, and thus at all points of
$g_\xi (B_\alpha\cap Y_\xi)_r$.
Note that $H$ does not vanish identically on $X$ since its exponents lie
in $M (\delta)$ and that its degree is at most $\delta$.

Recall that $r>0$ is given and we want to prove that
$n_r (X) \leq \lceil\frac{r}{d}\rceil$. We will prove this bound by choosing $\delta=\delta(r)$ following \cite{Marmon}.
By properties of Hilbert polynomials recalled in Section \ref{hil}, we have
$$\sigma_i = a_i d\delta^{2} + O_{d} (\delta),$$
$$
\mu = d  \delta + O_{d} (1),
$$
and thus
$$
e = \frac{d^{2}}{2} \delta^{2} + O_{d} (\delta),
$$
where the $O_d$ notation is for $\delta$ going to $+\infty$.
Thus,
$$
\frac{\sigma_i}{e} =  \frac{2 a_i}{d} + O_{ d} (\delta^{-1}).
$$
By Lemma \ref{grevlex} we find
$$
\frac{ \sigma_1+\sigma_2}{e} \leq  \frac{1}{d} + O_{ d} (\delta^{-1}).
$$
Hence, there exist integers $\delta>0$ and  $\alpha > 0$, both depending on $r$, such that
$$
 (r-1)\frac{ \sigma_1+\sigma_2 }{e} < \alpha \leq \Bigl\lceil\frac{r}{d}\Bigr\rceil.
$$
Now we are ready to bound $n_r (X')$, using this choice of $\delta $ and $\alpha$.
Note that $X_r$ is Zariski closed in $(\CC [t]_{< r})^2\simeq \CC^{2r}$
being an intersection of Zariski closed subsets.

Write
$$
p : \CC^s \times \cO_K \to \CC^s \times \cO_K/\cM_K^\alpha \simeq \CC^{ s +  \alpha}
$$
for the projection, where $\cM_K$ is the maximal ideal of $\cO_K$. By Remark \ref{rem:inj}, we may suppose that the projection $\CC^s \times \cO_K  \to \cO_K$
is finite to one on $Y$, and that $Y$ is a cell over $\CC^s$. This implies
\begin{equation}\label{dimpY}
\dim (p(Y) ) \leq \alpha,
\end{equation}
where $p(Y)$ is considered as a constructible subset of $\CC^{ s +  \alpha}$.
The set $\widehat{X}$ of all $(w, x)$ in
$$
p(Y)\times X_r
$$
such that there exists $y\in Y$ with $p(y)=w$ and $g(y)=x$ is a constructible subset of $\CC^{s+\alpha+2r}$ by Proposition \ref{cons}. Since $g$ is surjective, the projection $\widehat{X}\to X_r$ is also surjective. By construction, the projection $\widehat{X}\to  p(Y)$ has finite fibers  (of size at most $d\delta$ by B\'ezout's Theorem):
indeed, a point in $p (Y)$ corresponds to the choice of $\xi \in \mathbb{C}^s$ together with a ball $B_{\alpha}$ as above, and we have shown that the fiber
of
the projection $\widehat{X}\to  p(Y)$ over this point is contained in the intersection of $X$ with the zero locus of   a polynomial $H$ in two variables,
with
coefficients in $\mathbb{C} [t]$ and
 degree
at most $\delta$,   that does not vanish identically on the curve $X$.
It follows that $ n_r (X')\leq \dim (\widehat{X}) \leq \dim (p(Y)) \leq \alpha\leq \lceil r/d \rceil$.
\qed

\medskip

The following proposition is deduced from a result of \cite{CLR} on quantifier elimination in an expansion of $\cL$ which includes more auxiliary sorts, namely all the $\cO_K/\cM_K^\alpha$ for integers $\alpha>0$ (not to be confused with the  $K_n = \cO_K/(n\cM_K) \simeq\CC$ for $n>0$).

\begin{prop}\label{cons}  
Let $X\subset \CC^s\times \cO_K^{n+m}$ be an $\LPas^K$-definable set, let $\alpha>0$ and $r>0$ be integers and let
$$
p: \CC^s\times \cO_K^n\times \cO_K^m\to \CC^s\times (\cO_K/(t^\alpha))^n\times \cO_K^m  
$$
be the projection. 
Write $p(X)_r$ for the intersection of $p(X)$ with $\CC^s\times (\cO_K/(t^\alpha))^n\times (\cO_K^m)_r$.
Then $p(X)_r$, seen as subset of $\CC^{s + \alpha n + mr}$, is definable in the ring language with coefficients from $\CC$. 
\end{prop}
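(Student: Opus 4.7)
The plan is to deduce Proposition \ref{cons} from a quantifier elimination theorem for $\LPas^K$ in an expansion by sorts for the higher residue rings $R_\beta := \cO_K/\cM_K^\beta$, with residue maps $\rho_\beta : \cO_K \to R_\beta$ and the obvious transition maps $R_{\beta'} \to R_\beta$, as established in \cite{CLR}. Under the canonical Taylor-coefficient identification $R_\beta \cong \CC^\beta$, the ring operations on $R_\beta$ and the transition maps are polynomial over $\CC$. Likewise $\CC[t]_{<r}^m \cong \CC^{mr}$ by coefficients, so the target space $\CC^s \times R_\alpha^n \times (\cO_K^m)_r$ identifies with $\CC^{s+\alpha n+mr}$. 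Since a subset of $\CC^M$ definable in the ring language over $\CC$ is precisely a constructible subset, it will suffice to exhibit $p(X)_r$ as constructible.

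First, I would apply the cited quantifier elimination to rewrite the defining formula of $X$ as a Boolean combination of atomic conditions of the form $\rho_{\beta_k}(f_k(\sigma,x,y)) = c_k$, for finitely many $\LPas^K$-polynomials $f_k$, integers $\beta_k$, and residue-ring targets $c_k$. Presburger predicates on valuations $\ord f$ can be absorbed into this form: for sufficiently large $\beta$, $\rho_\beta(f)$ encodes $\ord f$ whenever $\ord f < \beta$, and otherwise records the fact $\ord f \geq \beta$, so any bounded Presburger condition on the $\ord f_k$ reduces to a Boolean combination of residue conditions once $\beta$ is chosen large enough.

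Next, fix an integer $N \geq \alpha$ majorizing every $\beta_k$. Each Taylor coefficient of $f_k(\sigma,x,y) \in K$ of index less than $N$ is a polynomial function over $\CC$ of $\sigma$, of the first $N$ Taylor coefficients of each $x_i$, and of the $r$ coefficients of each $y_j$ (which fully describe $y_j \in \CC[t]_{<r}$). Hence the above formula descends to a constructible subset $\widetilde Z \subset \CC^{s+nN+mr}$ in the variables $(\sigma,(\rho_N(x_i))_i,(y_j)_j)$, capturing exactly those tuples arising from $(\sigma,x,y) \in X$ with $y \in \CC[t]_{<r}^m$.

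Finally, observe that $p(X)_r$ is the image of $\widetilde Z$ under the coordinate projection $\CC^{s+nN+mr} \to \CC^{s+\alpha n+mr}$ that forgets the Taylor coefficients of each $x_i$ of indices $\alpha,\ldots,N-1$: a point $(\sigma,\bar x,y)$ lies in $p(X)_r$ precisely when some lift $\tilde x \in R_N^n$ of $\bar x$ satisfies $(\sigma,\tilde x,y) \in \widetilde Z$, which is exactly the fiber condition under this coordinate projection. By Chevalley's theorem, the image of a constructible set under such a morphism of complex varieties is constructible, giving the required ring-language definability. The main obstacle I expect is setting up the quantifier elimination from \cite{CLR} in the language $\LPas^K$ with the higher residue ring sorts and verifying that Presburger predicates on valuations are uniformly absorbed into residue data at a sufficiently high level; once this is in hand, the remainder is polynomial bookkeeping on coefficients together with Chevalley's theorem.
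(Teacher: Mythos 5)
Your overall template — pass to the language with higher residue rings, invoke the quantifier elimination of \cite{CLR}, identify residue rings with affine spaces over $\CC$, and finish by projecting — is the right one, and Chevalley's theorem at the end is a clean way to handle the projection. But there is a genuine gap in the middle, and in fact you have localized the obstacle correctly yourself ("verifying that Presburger predicates on valuations are uniformly absorbed into residue data at a sufficiently high level") without actually resolving it.

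The specific problem is your claim that the defining formula of $X$ can be rewritten as a Boolean combination of residue conditions $\rho_{\beta_k}(f_k) = c_k$ alone, with the Presburger predicates on $\ord f_k$ "absorbed once $\beta$ is chosen large enough". This absorption only works for \emph{eventually constant} conditions, and the Presburger predicates produced by quantifier elimination are not of that form: they typically involve congruences. Concretely, take $X = \{(\sigma,x,y) : \ord(x_1) \equiv 0 \pmod 2\}$. No fixed level $N$ of residue data $\rho_N(x_1)$ determines whether $(\sigma,x,y)\in X$, so the formula for $X$ does not "descend" to $\CC^{s+nN+mr}$ as you claim, and your intermediate set $\widetilde Z$ is not obtained in the way you describe. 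The deeper issue is that you apply QE to $X$ rather than to $p(X)$: the $x$-variables in $X$ range unboundedly over $\cO_K^n$, and for them the absorption is simply false. The paper sidesteps this by applying QE directly to $p(X)$ (with $\bar x$ living in the auxiliary residue-ring sort, not the valued field), so that the only remaining valued-field variables are the $y$'s.

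Even after making that repair, the absorption step for the $y$-variables is still nontrivial. One needs to know that on the subset $(\cO_K^m)_r$ the valuations $\ord(f_i(y))$ of the finitely many polynomials $f_i$ produced by QE take only finitely many values — this is precisely the boundedness you implicitly assume when you say "any bounded Presburger condition". The paper proves this from the cell decomposition theorem (Theorem \ref{cda}); it is a genuine geometric input, not a bookkeeping fact. Without it, one cannot choose the integer $N$ you need, and one cannot replace $\ord$ and $\ac_\alpha$ by algebraic conditions on the coefficients of $y$. To fix the proof you should (i) apply quantifier elimination to $p(X)$, not $X$, so the valued-field variables are only the $y$'s, and (ii) insert the cell-decomposition argument that $\ord(f_i)$ is bounded on $(\cO_K^m)_r$ before invoking the passage to residue conditions. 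With those two changes, the remaining coefficient bookkeeping and the concluding appeal to Chevalley (or, as in the paper, a direct syntactic analysis) are sound.
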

\begin{proof}
Let $\LPas'$ be the language $\LPas$ enriched with the auxilliary sorts $\cO_K\bmod (t^{\alpha})$ for each integer $\alpha>0$, (higher order) angular component maps
$$
\mathrm{ac}_\alpha: K \to \cO_K\bmod (t^{\alpha})
$$
sending nonzero $x$ to $x t^{-\ord x} \bmod (t^{\alpha})$ and zero to zero, and the bijections from $\cO_K\bmod (t^{\alpha})$ to $\CC^\alpha$ sending $\sum_{i=0}^{\alpha-1} x_i t^i $ to the tuple $(x_i)_i$. (The maps $\mathrm{ac}_\alpha$ should not be confused with the maps $\ac_n$ introduced before.)
One has quantifier elimination for all sorts in the language $\LPas'$ by \cite{CLR}[Thm. 4.2].
It follows that $p(X)$ is $\LPas'$-definable without quantifiers.
Moreover, in the variables running over $\cO_K^{m}$, finitely many polynomials $f_i(x)$ over $K$ can occur in the formula describing $p(X)$, and we may suppose they occur as arguments of $\ac_\alpha$ for some $\alpha$ and of $\ord$. Note that, by the geometry of definable sets as summarized by the cell decomposition result, the $\ord(f_i)$ take only finitely many values on $(\cO_K^m)_r$.
Now the lemma follows by syntactical analysis of quantifier free formulas describing $p(X)$, in relation with the extra condition that $x$ lies in $(\cO_K^m)_r$.
\end{proof}

\begin{remark}\label{remdim}
In fact, the inequality (\ref{dimpY}) 
holds in a wider generality.
Consider an $\LPas^K$-definable set $X\subset \CC^s\times \cO_K^{n}$ and suppose that the projection
$$
\pi:\CC^s\times \cO_K^{n}\to \cO_K^n
$$
is finite to one on $X$, where $K=\CC\llp t \rrp$. Suppose further that $\pi(X)$ is of dimension $m$.
Let $\alpha>0$ be an integer and 
write
$$
p : \CC^s  \times \cO_K^n \to  \CC^{s+  \alpha n},
$$
for the projection which is $\cO_K\to \cO_K/(t^\alpha) \simeq \CC^{\alpha}$ on the last $n$ coordinates.
Then one has
\begin{equation}\label{dimpX}
\dim (p(X) ) \leq \alpha m.
\end{equation}
Indeed, (\ref{dimpX}) is easy to show when $X$ is a cell, and follows by the cell decomposition theorem \ref{cda} in general.
\end{remark}

%
\subsection{An observation on the size of motivic transcendental parts}
When $\cL=\Lan^K$ with $K=\CC\llp t \rrp$, one may wonder whether one can bound $(X^{\rm trans})_r$ in terms of $r>0$, when $X^{\rm trans}$ is the transcendental part of a definable subset $X$ in $\cO_K^n$, and with notation from Section \ref{nr}.
As usual, $X^{\rm trans}$  is $X\setminus X^{\rm alg}$, where the algebraic part $X^{\rm alg}$ of $X$ is defined as the set of those points $x\in X$ through which there exists a semi-algebraic (namely $\LPas^K$-definable) $S$ of dimension $1$ such that $X\cap S$ is locally around $x$ of dimension $1$.


A first idea would be to try bounding the dimension $n_r(X^{\rm trans})$ in terms of $r$, but such bounds are useless in view of Proposition \ref{obs}.

\begin{prop}\label{obs}
Let $X\subset \cO_K^n$ be $\cL$-definable. Then, for any $r>0$ and any algebraic curve $C\subset \CC^{nr}$, the intersection of $C$ with $(X^{\rm trans})_r\subset \CC^{nr}$ is finite.
\end{prop}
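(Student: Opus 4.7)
The plan is to embed $\varphi(C)$ into a one-dimensional $K$-algebraic (hence semi-algebraic) curve $S \subset K^n$ and then invoke the definition of $X^{\rm alg}$ directly. Without loss of generality $C$ is irreducible.

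Let $\varphi : \CC^{nr} \hookrightarrow K^n$ be the $\CC$-linear embedding $(a_{ij})_{i \le n,\, j < r} \mapsto \bigl(\sum_{j=0}^{r-1} a_{ij} t^j\bigr)_i$, and let $\varphi_K : K^{nr} \to K^n$ be its $K$-linear extension given by the same formula. Since $\varphi|_C$ is injective and $C$ is of dimension one, the restriction of $\varphi_K$ to $C \otimes_\CC K$ is dominant onto its image with generically finite fibers; set
\[
S := \overline{\varphi_K(C \otimes_\CC K)} \subset \AA^n_K,
\]
the Zariski closure in $\AA^n_K$ of this image. Then $S$ is a $K$-algebraic curve, hence semi-algebraic (i.e.\ $\LPas^K$-definable) of $\cL$-dimension one in $K^n$, and by construction $\varphi(C) \subset S(K)$.

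Next, $S \cap X$ is $\cL$-definable, as an intersection of an $\LPas^K$-definable with an $\cL$-definable set. Since it sits inside the one-dimensional semi-algebraic set $S$, $b$-minimality and cell decomposition for $(K,\cL)$ force its set of isolated points to be finite.

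The key observation is that every point $\varphi(c)$ with $c \in C \cap (X^{\rm trans})_r$ is isolated in $S \cap X$: were it not, $S \cap X$ would be of $\cL$-dimension one locally at $\varphi(c)$, and $S$ would then exhibit $\varphi(c)$ as a point of $X^{\rm alg}$, contradicting $\varphi(c) \in X^{\rm trans}$. Combined with the previous step, $\varphi(C \cap (X^{\rm trans})_r)$ lies in a finite set, and since $\varphi$ is injective on $C$ one concludes that $C \cap (X^{\rm trans})_r$ itself is finite. The main subtlety lies in the construction of $S$: one must pass from a $\CC$-algebraic curve inside a finite-dimensional slice of $K^n$ to a genuine $K$-algebraic curve in $\AA^n_K$ of the same dimension containing it, which the $K$-linear extension $\varphi_K$ makes transparent.
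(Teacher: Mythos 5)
Your proof is correct and follows essentially the same strategy as the paper's: both reduce the claim to producing a one-dimensional $\LPas^K$-definable set $S \subset K^n$ containing the image of $C$ under the polynomial-evaluation map, then observe that $X \cap S$ is $\cL$-definable so has only finitely many isolated points, while any point of $C \cap (X^{\rm trans})_r$ must be such an isolated point by the definition of $X^{\rm alg}$. The only difference is cosmetic: you take $S$ to be the Zariski closure of $\varphi_K(C \otimes_\CC K)$, an honest $K$-algebraic curve, whereas the paper takes $S$ to be the definable image $\pi(C(K)\cap\cO_K^{nr})$, which is sufficient since the definition of $X^{\rm alg}$ asks only for an $\LPas^K$-definable set of dimension one; note also that what is actually needed from the map $\varphi_K|_{C\otimes K}$ is merely non-constancy (which follows from injectivity of $\varphi$ on $C(\CC)$), not the generically-finite-fibers claim you state.
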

\begin{proof}
Suppose for contradiction that there is an algebraic curve $C\subset \CC^{nr}$ with infinite intersection with $X^{\rm trans}_r$.
Let us write $C(K)$ for the subset of $K^{nr}$ of $K$-rational points on $C$.
Further, let us write $S$ for the image of $C(K)\cap \cO_K^{nr}$ under the projection
$$
\pi :\begin{cases}\cO_K^{nr}\to \cO_K^n \\ (x_{1,0},\ldots,x_{1,r-1},\ldots, x_{n,0},\ldots,y_{n,r-1} ) \mapsto
(\sum_{i=0}^{r-1}t^i x_{j,i})_{j=1}^n.\end{cases}
$$
Since $C(K)\cap \cO_K^{nr}$ is an $\LPas^K$-definable set of dimension at most $1$, and since $S$ is its image under an $\LPas^K$-definable function, the dimension of $S$ is at most equal to $1$. Since $S\cap X$ contains an infinite subset of $C$, the dimension of $S\cap X$ equals $1$.
Here, we have used the natural inclusion
$$
C \subset \CC^{nr}\simeq (\mathbb{C} [t]_{< r})^n \subset \CC\llb t \rrb^n = \cO_K^n.
$$
Moreover, $S\cap X$ is of local dimension $1$ at all but finitely many of its points, since it is an $\cL$-definable set. Hence, $X^{\rm trans}$ is contained in the union of a finite set with $X\setminus S$.
Since $(X\setminus S)_r=X_r\setminus S_r $ and since $S_r$ contains $C$,
$X^{\rm trans}_r$ cannot have infinite intersection with $C$.
\end{proof}

Finally, let us mention that it seems quite difficult  to give sharp bounds on the size of the set $X^{\rm trans}_r$ in terms of $r$ in general, for $X$ of large dimension. Under some extra conditions on $X$, like with some non-archimedean analogues of restricted Pfaffians instead of the full subanalytic language on $K$, one may hope there exist results for (low-dimensional) definable sets, similar to e.g. the results in \cite{PilaP} for real Pfaffian curves.

\subsection{From $\CC (t)$ to $\mathbb{F}_q (t)$}

In this section we shall discuss related results over $\mathbb{F}_q (t)$. Although our methods are restricted to characteristic zero, due to our limited understanding of the structure of definable sets over henselian fields of positive characteristic, it is possible to
use standard methods to deduce from Theorem \ref{motivicBP}
asymptotic bounds for the number of rational points in $\FF_q [t]$ of  bounded degree. This provides in particular
a partial answer to a question raised by Cilleruelo and Shparlinski in \cite{CS} Problem 9, about possible analogues
over $\mathbb{F}_q (t)$  of the Bombieri-Pila bound.
As noticed in Remark \ref{cohenff}, note however that an analogue of S. D. Cohen's bound holds over $\mathbb{F}_q (t)$.

Let $R$ be an algebra essentially of finite type over $\mathbb{Z}$, i.e. the localization of a finitely generated $\mathbb{Z}$-algebra.
We assume $R$ is an integral domain of characteristic zero and we denote by $K$ its fraction field.

We consider the category $\Fie_R$ of ring morphisms $R \to F$ with $F$ a field, i.e. the category of field endowed with an $R$-algebra structure.
If $\cX$ is an $R$-scheme, and  $R \to F$ a ring morphism, we denote by $\cX \otimes F$ the $F$-scheme obtained by base change to $F$.
We consider the affine space
$\AA^n_{R [t]}$ and $X$ a closed subscheme whose ideal is generated by polynomials
$f_1, \dots, f_s \in R[t][X_1, \dots, X_n]$.

For any positive integer $r$, we denote by  $F[t]_{< r}$ the set of polynomials with coefficients in $F$ and degree $<r$.
We identify $F[t]_{< r}$ with $F^r$ and $(F[t]_{< r})^n$ with $F^{rn}$.

The following lemma is classical, we provide a proof for the sake of completeness.
\begin{lem}\label{represent}
Let $r$ be a positive integer. The functor
$
X_r : \Fie_R \to \mathrm{Sets}
$
sending $R \to F$ to
$$
X_{F, r} := X (F[t]) \cap (F[t]_{< r})^n \subset F^{rn}
$$
is representable by a closed subscheme
$\cX_r$ of $\AA^{rn}_R$. In particular for any $F$ in $\Fie_R$
we have a natural identification of
$X_{F, r}$ with $\cX_r (F)$ inside $R^{rn}$.
\end{lem}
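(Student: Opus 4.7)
The plan is the standard ``Weil restriction on degree $<r$ polynomials'' construction: write each coordinate as a generic polynomial in $t$ of degree $<r$, substitute into the $f_i$, and read off the coefficients of $t^\ell$ as the defining equations of $\cX_r$. No real obstacle should arise; the whole statement reduces to a bookkeeping exercise plus a routine universal property check.

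Concretely, I would introduce $rn$ indeterminates $Y_{j,k}$ over $R$, for $1 \le j \le n$ and $0 \le k \le r-1$, and set $Y_j(t) := \sum_{k=0}^{r-1} Y_{j,k} t^k \in R[Y_{j,k}][t]$. Each $f_i \in R[t][X_1,\ldots,X_n]$ yields an element $f_i(Y_1(t),\ldots,Y_n(t)) \in R[Y_{j,k}][t]$, which expands as a finite sum $\sum_\ell g_{i,\ell}(Y)\,t^\ell$ with $g_{i,\ell} \in R[Y_{j,k}]$. I would then let $\cX_r$ be the closed subscheme of $\AA^{rn}_R = \Spec R[Y_{j,k}]$ cut out by the ideal generated by all the $g_{i,\ell}$.

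For representability, fix $R \to F$. An $F$-point of $\cX_r$ is a tuple $y=(y_{j,k}) \in F^{rn}$ with $g_{i,\ell}(y)=0$ for all $i,\ell$. Setting $y_j(t) := \sum_k y_{j,k} t^k \in F[t]_{<r}$, the vanishing of all the $g_{i,\ell}(y)$ is, by construction, equivalent to the vanishing of $f_i(y_1(t),\ldots,y_n(t))$ in $F[t]$ for each $i$, i.e.~to $(y_1(t),\ldots,y_n(t)) \in X(F[t]) \cap (F[t]_{<r})^n$. Under the canonical identification $(F[t]_{<r})^n \simeq F^{rn}$, this gives a functorial bijection $\cX_r(F) \iso X_{F,r}$, which is exactly the claimed representability and identification. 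Independence of the auxiliary choice of generators $f_1,\ldots,f_s$ of the ideal of $X$ is automatic from the functorial definition of $X_r$.
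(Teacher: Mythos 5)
Your proposal is correct and is essentially the same construction as in the paper: substitute generic degree $<r$ polynomials for the coordinates, expand in powers of $t$, and take the coefficients as defining equations of $\cX_r$. The paper's proof is just a terser version of yours; your added check of the universal property and the remark on independence of the chosen generators are routine and consistent with what the paper leaves implicit.
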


\begin{proof}After inserting the polynomials $x_i (t) = \sum_{0 \leq \gamma <r} a_{i \gamma}t^\gamma$
into the system of equations
$f_j (x_1, \dots, x_n)$ and developing, one gets a system of polynomial equations in the variables $a_{i \gamma}$.
The corresponding closed subvariety
$\cX_r$ of $\AA^{rn}_R$
represents the functor $X_r$.
\end{proof}

If $k$ is a finite field, we denote by $p_k$ its characteristic and $q_k$ its cardinality.
We shall use the following lemma, which is a consequence from statements in \cite{CVM}  based on the Lang-Weil estimate.

\begin{lem}\label{langweil}

Let $\cZ$ be a closed subscheme of $\AA^m_R$.
Let $n = \dim (\cZ \otimes K)$.
There exists positive integers $p_0$, $C$ and $M$, such that, for
any $R \to k$ in $\Fie_R$ with $k$ finite,
if $p_k > p_0$ and $\cZ (k) \not= \emptyset$,
then for some $\delta \leq n$ and some $\mu \in \{1, \dots, M\}$,
\[
\vert \# \cZ (k)- \mu q_k^\delta \vert \leq C q_k^{\delta - \frac{1}{2}}.
\]
\end{lem}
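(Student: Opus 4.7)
My plan is to reduce the statement to a uniform Lang--Weil estimate by a spreading-out argument on $\cZ$.

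First, I would decompose $\cZ\otimes\bar K$ into its geometrically irreducible components $W_1,\dots,W_M$. Each $W_j$ is defined over a finite extension $K_j/K$ and has some dimension $n_j\le n$. By standard spreading-out (see e.g.\ EGA~IV), after replacing $R$ by a suitable localization $R[1/N_0]$---which is harmless, since taking $p_0$ large enough excludes the finitely many bad primes introduced by this localization---I may find closed subschemes $\cW_j\subset\AA^m_{R_j}$, where $R_j$ is the integral closure of $R$ in $K_j$, such that $\cW_j\otimes K_j=W_j$, and such that $\cW_j$ is flat over $\mathrm{Spec}(R_j)$ with geometrically irreducible fibres of dimension $n_j$. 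I would also arrange that $\bigcup_j\cW_j$ equals the base change of $\cZ$ to $\bigcup_j R_j$.

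Second, I apply the uniform Lang--Weil estimate: there exist positive integers $p_0$ and $C'$ such that, whenever $p_k>p_0$ and $R\to k$ lifts to a map $R_j\to k'$ with $k'$ a finite residue field, the closed subscheme $\cW_j\otimes k'$ is geometrically irreducible of dimension $n_j$ and
\[
\bigl|\#(\cW_j\otimes k')(k')-q_{k'}^{n_j}\bigr|\le C'\, q_{k'}^{n_j-\frac12}.
\]
The $k$-points of $\cZ\otimes k$ are exactly those points of $\bigcup_j(\cW_j\otimes\bar k)(\bar k)$ that are fixed by the Frobenius of $k$. For each $j$, the Galois group $\mathrm{Gal}(\bar k/k)$ permutes the geometrically irreducible components of $\cW_j\otimes k$ (which are conjugates of $\cW_j\otimes k'$), and the number of $k$-points lying on the union of a Frobenius-orbit of size $>1$ of such components is bounded by a Lang--Weil error coming from the intersection, hence by $O(q_k^{n_j-\frac12})$. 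Only the components $W_j$ that are already defined over $k$ (i.e.\ whose orbit is trivial) contribute a main term $q_k^{n_j}$.

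Third, I conclude by inclusion--exclusion. Set $\delta=\delta(k)$ to be the maximum $n_j$ over those $W_j$ which, after reduction, are defined over $k$, and let $\mu=\mu(k)$ be the number of such $W_j$ with $n_j=\delta$; the intersections of distinct $W_j$'s have strictly smaller dimension and so contribute only to the error term. Then
\[
\bigl|\#\cZ(k)-\mu\, q_k^\delta\bigr|\le C\, q_k^{\delta-\frac12}
\]
for a constant $C$ depending only on $\cZ$, $M$ and $C'$. Since the possible values of $\mu$ are bounded by the total number $M$ of geometrically irreducible components of $\cZ\otimes\bar K$, and $\delta\le n$, the statement follows. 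The main obstacle is the bookkeeping for the spreading-out step, ensuring that flatness, geometric irreducibility of fibres, and dimension are simultaneously preserved; this is where the hypothesis $p_k>p_0$ is used, and it is also the step invoking the results from \cite{CVM}.
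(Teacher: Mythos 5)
Your proof is a from-scratch re-derivation of the estimate via spreading-out, Frobenius orbits, and Lang--Weil, whereas the paper simply cites Propositions 3.3 and 4.9 of \cite{CVM}, which already contain the full uniform asymptotic (not merely a spreading-out result, as your closing remark suggests). The paper then only needs to observe that the dimension $\delta$ of the Zariski closure of $\cZ(k)$ is at most $\dim(\cZ\otimes k)$, and that the latter equals $\dim(\cZ\otimes K)=n$ once $p_k$ is large enough. Both routes are legitimate, and your sketch contains the ideas underlying the CVM estimate; but a re-derivation needs more care than the citation, and yours has a genuine gap.

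The gap is the corner case where $\cZ(k)\neq\emptyset$ but \emph{no} geometrically irreducible component $W_j$ of top (or indeed any) dimension is defined over $k$ after reduction: you set $\delta(k)$ to be the maximum $n_j$ over those $W_j$ with trivial Frobenius orbit, but this set may be empty. This happens, for instance, when $\cZ\otimes\bar k$ is the union of two Frobenius-conjugate top-dimensional components whose (Frobenius-stable) intersection is lower-dimensional but nonempty over $k$: then $\#\cZ(k)$ is governed by the intersection, not by any $W_j$, and your construction leaves $\delta$ and $\mu$ undefined. To repair this, one must iterate the argument on the strictly smaller-dimensional Frobenius-stable locus (essentially, work with the Zariski closure of $\cZ(k)$, which is exactly what the paper's appeal to \cite{CVM} does), and one must then re-check that $\mu$ remains bounded by a constant $M$ depending only on $\cZ$. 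This bookkeeping is where the definable/model-theoretic uniformity in \cite{CVM} is doing the real work, and it is not addressed in your sketch.
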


\begin{proof}It follows from  Proposition 3.3 and Proposition 4.9  in \cite{CVM} that there exist positive integers
$C$ and $M$ such that, for any $R \to k$ in $\Fie_R$ with $k$ finite,
if
$\cZ (k) \not= \emptyset$,
$\vert \# \cZ (k)- \mu q_k^\delta \vert \leq C q_k^{\delta - \frac{1}{2}}$
for some $\mu \in \{1, \dots, M\}$ and $\delta$ the dimension of the Zariski closure of  $\cZ (k)$ in $\AA^m_k$.
In particular,
$\delta \leq \dim (\cZ \otimes k)$.
Since, for some $p_0$,
$\dim (\cZ \otimes k) = \dim (\cZ \otimes K)$ whenever $p_k \geq p_0$, the  statement  follows.
\end{proof}

Now we can state our result, which provides a partial answer to Problem 9 in \cite{CS}.

\begin{thm}\label{ffBP}Let $R$ be an algebra essentially of finite type over $\mathbb{Z}$ and assume $R$ is an integral domain of characteristic zero.
Let   $K$ be the fraction field of $R$ and $\overline K$ an algebraic closure of $K$.
Let $X$ be a closed subscheme of $\AA^n_{R [t]}$. Assume $X \otimes {\overline K} \llp t \rrp$ is irreducible
of dimension $m$ and degree $d$.
Fix a positive integer $r$.
There exist positive integers $p_0$, $C$ and $M$, such that, for
any $R \to k$ in $\Fie_R$ with $k$ finite,
if $p_k > p_0$ and $X_{k, r} \not= \emptyset$,
then
\[
\vert \# X_{k, r}- \mu q_k^\delta \vert \leq C q_k^{\delta - \frac{1}{2}}
\]
for some $\delta \leq r (m - 1)  + \lceil\frac{r}{d}\rceil$ and some $\mu \in \{1, \dots, M\}$.
\end{thm}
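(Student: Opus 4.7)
The proof should essentially be a bridge between the three main ingredients already assembled: the representability result (Lemma \ref{represent}), the Lang--Weil type estimate (Lemma \ref{langweil}), and the geometric bound on $n_r$ over $\CC\llp t \rrp$ (Theorem \ref{motivicBP}). The only nontrivial issue is to match the ``dimension of the Zariski closure'' used in Theorem \ref{motivicBP} with the scheme-theoretic dimension feeding into Lemma \ref{langweil}.

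The plan is as follows. First, by Lemma \ref{represent}, the functor $X_r$ is represented by a closed subscheme $\cX_r$ of $\AA^{rn}_R$, so that $X_{k,r}=\cX_r(k)$ for every $R\to k$. Apply Lemma \ref{langweil} to $\cZ=\cX_r$: we obtain constants $p_0$, $C$, $M$ (depending on $\cX_r$, hence on $X$ and $r$) such that, for every finite $k$ with $p_k>p_0$ and $\cX_r(k)\neq\emptyset$, one has
\[
\bigl|\#\cX_r(k)-\mu q_k^{\delta}\bigr|\leq Cq_k^{\delta-1/2}
\]
for some $\mu\in\{1,\ldots,M\}$ and some $\delta\leq \dim(\cX_r\otimes K)$. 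It then suffices to show
\begin{equation}\label{keybound}
\dim(\cX_r\otimes K)\;\leq\;r(m-1)+\Bigl\lceil\frac{r}{d}\Bigr\rceil.
\end{equation}

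To prove \eqref{keybound}, first replace $K$ by $\overline K$: since $\overline K/K$ is algebraic, $\dim(\cX_r\otimes K)=\dim(\cX_r\otimes\overline K)$. As $\cX_r\otimes\overline K$ is a reduced (or at least, whose underlying set is) closed subscheme of $\AA^{rn}_{\overline K}$, its dimension equals the dimension of the Zariski closure of its $\overline K$-points in $\overline K^{rn}$. By Lemma \ref{represent} those $\overline K$-points are exactly
\[
X_{\overline K,r}=X(\overline K[t])\cap(\overline K[t]_{<r})^n,
\]
and since $\overline K[t]_{<r}\subset\overline K\llp t\rrp$ the right hand side coincides with $(X\otimes\overline K\llp t\rrp)_r$ in the notation of Section \ref{nr}. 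Hence
\[
\dim(\cX_r\otimes\overline K)=n_r\bigl(X\otimes\overline K\llp t\rrp\bigr).
\]
Now $X\otimes\overline K\llp t\rrp$ is, by hypothesis, an irreducible closed subvariety of $\AA^n_{\overline K\llp t\rrp}$ of dimension $m$ and degree $d$. Since $\overline K$ is algebraically closed of characteristic zero, Theorem \ref{motivicBP} applies (in the form indicated by Remark \ref{notonlyC}) and yields
\[
n_r\bigl(X\otimes\overline K\llp t\rrp\bigr)\leq r(m-1)+\Bigl\lceil\frac{r}{d}\Bigr\rceil,
\]
which gives \eqref{keybound} and finishes the proof.

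The only real subtlety is the identification $\dim(\cX_r\otimes\overline K)=n_r(X\otimes\overline K\llp t\rrp)$, i.e.\ that evaluating the functor $X_r$ on the field $\overline K$ (polynomials) produces the same set as intersecting $X(\overline K\llp t\rrp)$ with polynomials of degree $<r$; this is immediate because elements of $(\overline K[t]_{<r})^n$ are already polynomials and automatically belong to $\overline K[t]^n\subset\overline K\llp t\rrp^n$. Once this is noted, the argument is simply a concatenation of the three lemmas, and no further work is required.
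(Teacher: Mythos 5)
Your proof is correct and follows the same strategy as the paper: combine Lemma \ref{represent} to identify $X_{k,r}$ with $\cX_r(k)$, apply Lemma \ref{langweil} to $\cZ=\cX_r$, and bound $\dim(\cX_r\otimes K)$ via Theorem \ref{motivicBP} (through Remark \ref{notonlyC}) after base-changing to $\overline K$. You simply spell out in more detail the identification $\dim(\cX_r\otimes\overline K)=n_r(X\otimes\overline K\llp t\rrp)$, which the paper leaves implicit.
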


\begin{proof}From Theorem \ref{motivicBP}, which stills hold when replacing $\CC$ by $\overline K$ by Remark \ref{notonlyC},
it follows that $\dim (\cX_r (\overline K)) \leq r (m - 1)  + \lceil\frac{r}{d}\rceil$.
Since $\dim (\cX_r \otimes  K) = \dim (\cX_r (\overline K))$,
the statement is then a direct consequence of Lemma \ref{langweil} applied to
$\cZ = \cX_r$.
\end{proof}

\begin{remark}\label{cohenff}When $d > 1$,
Theorem \ref{ffBP} provides a non-trivial improvement on the ``trivial'' bound with $\delta \leq rm$.
Note however that, using the function field version of the large sieve inequality due to Hsu \cite{Hsu} instead of the one used in \cite{serre}, one can easily adapt the  arguments given in \cite{serre}
to get the following
function field analogue of S. D. Cohen's result in \cite{cohen}:  if $X$ is an irreducible subvariety of
$\mathbb{A}^n_{\mathbb{F}_q \llp t \rrp}$ of dimension $m$ and degree $d\geq 2$,
then $\# X_{r} = O (r q^{r(m - \frac{1}{2})})$, with $X_{r} := X (\mathbb{F}_q[t]) \cap (\mathbb{F}_q[t]_{< r})^n$. 
\end{remark}

The following question seems natural:

\begin{question}Does Theorem \ref{motivicBP} still hold when $\mathbb{C}$ is replaced by an algebraically closed field of positive characteristic?
\end{question}

\bibliographystyle{amsplain}

\end{spacing}
\end{document}